\newcommand{\bb}[1]{\ensuremath{\mathbb{#1}}}
\newcommand{\ca}[1]{\ensuremath{\mathcal{#1}}}
\newcommand{\E}[1]{\ensuremath{\mathbf{E}\left[#1\right]}}
\newcommand{\Var}[1]{\ensuremath{\mathbf{Var}\left[#1\right]}}
\newcommand{\Cov}[1]{\ensuremath{\mathbf{Cov}\left[#1\right]}}
\newcommand{\abs}[1]{\ensuremath{\left| #1 \right|}}
\DeclarePairedDelimiter{\card}{|}{|}
\newcommand{\R}{\mathbb{R}}
\newcommand{\N}{\mathbb{N}}
\newcommand{\Z}{\mathbb{Z}}
\newcommand{\eps}{{\varepsilon}}
\newcommand{\Bin}{\ensuremath{\operatorname{Bin}}}
\newcommand{\Ber}{\ensuremath{\operatorname{Ber}}}
\newcommand{\defeq}{:=}
\newcommand{\adj}{\sim}
\newcommand{\one}{\mathbf{1}}
\newcommand{\Neigh}{\Gamma}
\renewcommand{\epsilon}{\eps}
\renewcommand{\phi}{\varphi}
\renewcommand{\iff}{\Leftrightarrow}
\renewcommand{\Pr}{{\mathbf P}}
\theoremstyle{plain}
\newtheorem{theorem}{Theorem}[section]
\newtheorem{conjecture}[theorem]{Conjecture}
\newtheorem{proposition}[theorem]{Proposition}
\newtheorem{lemma}[theorem]{Lemma}
\newtheorem{corollary}[theorem]{Corollary}
\newtheorem{question}[theorem]{Question}
\theoremstyle{definition}
\newtheorem{definition}[theorem]{Definition}
\newtheorem{remark}[theorem]{Remark}
\newtheorem{experiment}[theorem]{Experiment}
\newcommand{\dif}{\delta}
\newcommand{\pup}{p_{\textup{up}}}
\newcommand{\pac}{p_{\textup{ac}}}
\newcommand{\adv}{\Delta}
\newcommand{\Cbe}{C_{\scriptscriptstyle\textup{BE}}}
\newcommand{\upd}[1]{#1^{\textup{up}}}
\newcommand{\ac}[1]{#1^{\textup{ac}}}
\newcommand{\hypo}[1]{\widehat{#1}}
\newcommand{\obs}{\leadsto}
\newcommand{\Clr}{\ell}
\theoremstyle{plain}
\lstdefinestyle{myStyle}{
	language=Python,
	basicstyle=\footnotesize\tt,
	keywordstyle=\color{orange!70!black},
	commentstyle=\itshape\color{gray!60!black},
	stringstyle=\color{green!50!black},
	stepnumber=1,
	tabsize=2,
	numbers=none,
	numberstyle=\tiny,
	numbersep=5pt,
	showspaces=false,
	escapechar=`,
	showstringspaces=false}
\title[The power of few]{The ``Power of Few'' phenomenon: the sparse case}
\date{\today}
\author[BaoLinh Tran]{BaoLinh Tran}
\email{l.tran@yale.edu}
\address{Dept. of Mathematics, Yale University, 219 Prospect St, Floor 9, New Haven, CT 06511}
\author[Van Vu]{Van Vu}
\email{van.vu@yale.edu}
\begin{document}
	
\begin{abstract}
		The ``majority dynamics'' process on a social network begins with an initial phase, where the individuals are split into two competing parties, Red and Blue. Every day, everyone updates their affiliation to match the majority among those of their friends.
		While studying this process on
        Erd\"{o}s-R\'{e}nyi
        $G(n, p)$ random graph (with constant density), the authors discovered  the ``Power of Few''  phenomenon, showing that a very small advantage to one side already guarantees that  everybody will unanimously join that side after just a few  days with overwhelming probability.
		For example, when $p = 1/2$, then 10 extra members guarantee this unanimity with a 90\% chance, regardless of the value of $n$. 
		
		In this paper, we study  this phenomenon for sparse random graphs. 
      It is clear that below the connectivity threshold, the phenomenon ceases to hold, as the isolated vertices never change their colors. We show that 
      it holds for every density above the threshold. 

  To make the process more realistic, we  
  also assume that individuals can randomly activate their accounts to post their opinions and observe their neighbors (just as we do on social media). We prove that the phenomenon is robust under this assumption. To motivate further research, we also state a number of open questions with supportive  numerical experiments. 
	\end{abstract}

\maketitle

\newpage

\section{Introduction} \label{sec:introduction}
\subsection {Majority Dynamics}

The study of processes on a social network where individuals form and change their opinions based on those around them  (opinion dynamics) is an  active research area with applications in various fields. 
The 2017 survey \cite{mossel2017} by Mossel and Tamuz discussed applications in Biology
, Economics
, Statistical Physics
. More recent applications have been found in Computer Science, particularly in network security \cite{alistarh2015,alistarh2017,zehmakan2019,gacs1978,land1995}.

In this paper, we  focus on a process called \emph{majority dynamics}, where each individual starts with a color (Red or Blue, representing different opinions), and updates their color each day to match the one the majority of their friends hold on the previous day.
\begin{definition}[Majority Dynamics]  \label{defn:majority-dynamics}
	Given simple undirected graphs $G = (V, E)$, and a partition $V = R\cup B$, the \textbf{majority dynamics} on $G$ with initial coloring $(R, B)$ is a process that begins with Day 0, where vertices in $R$ and $B$ are respectively colored Red and Blue.
 For each $t \ge 0$, on day $t + 1$,
    each vertex adopts the color the majority of its neighbors hold on day $t$, or remains unchanged in case of a tie.
\end{definition}

\subsection{Unanimity and the ``Power of Few'' phenomenon}

How the process terminates depends strongly on the structure of the network. 
In this paper, we consider
Erd\"{o}s-R\'{e}nyi
$G(n, p)$ random graphs and focus on  the question of \emph{unanimity}: Given an initial coloring, what is the probability for each color to \emph{win}, meaning it covers the whole graph at some point?

This natural question has been  tackled in many recent works by various authors \cite{benjamini,fountoulakis,zehmakan2019,devlinberkowitz2022,sahsawhney2021,jhkim2021}.

Consider the balanced case, when both color classes begin with $n/2$ vertices, then it is clear that the winning probabilities (of Red and Blue) are equal, by symmetry. 
When an initial bias, or \emph{gap} $\adv > 0$, is introduced (say Red begins with $n/2 + \adv$ vertices), the winning odds shift in favor of Red. But how fast?

In a previous paper, the authors discovered a surprising  ``Power of Few'' phenomenon: when $p$ is a constant, $\adv$ only needs to be a constant for Red to win within 4 days with probability arbitrarily close to $1$ \cite{TranVuDense2020}, regardless of the size $n$ of the population. For instance, when $p = 1/2$, $\adv = 5$ already guarantees that Red wins with a probability of at least $.9$ \cite{TranVuDense2020}.
It is thus natural to ask: can we extend this phenomenon to smaller values of $p$?

It is clear that when $p$ is below the connectivity threshold, $n^{-1}\log n$, the phenomenon fails, since there are, with probability $1 - o(1)$, isolated vertices that never change their colors.
Under what assumption can we ensure that the threshold for connectivity is also the threshold for unanimity?

\subsection{Main result}

Our first main theorem
establishes an assumption guaranteeing unanimity for every $p$ above the connectivity threshold, resolving this question.

\begin{theorem}[The Power of Few for sparse graphs] \label{thm:main-nonlazy}
	Consider a set $V$ of $n$ vertices with an arbitrary partition $V = R\cup B$ where $\card{R} = n/2 + \Delta$, then draw a $G(n, p)$ random graph over $V$.
	If $n - 10 \ge pn \ge (1 + \lambda)\log n$ and $p\adv \ge 10$, where $\lambda > 0$ is a constant, then majority dynamics on $G$ with initial coloring $(R, B)$ achieves a Red unanimity within time $O(\log_{pn}n)$ with probability at least
	\[
	1 - O\left(\frac{1}{\min\{n, p\adv^2\}} + n^{-\lambda/2}\right).
	\]
\end{theorem}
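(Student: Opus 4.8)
The plan is to follow the gap $\Delta_t := \tfrac12(\card{R_t}-\card{B_t})$, where $R_t,B_t$ are the colour classes on day $t$, so $\Delta_0=\Delta$; encode colours as $x_v(t)=\pm1$ and set $S_v(t):=\sum_{u\adj v}x_u(t)$, so that $x_v(t{+}1)=\operatorname{sgn}S_v(t)$ with ties frozen. I would show: (i) the very first step boosts the gap from $\Delta$ to $\Omega(\Delta\sqrt{np})$; (ii) one further step pushes the majority past a constant fraction; (iii) from there the minority is eroded and Red unanimity is reached within $\O(\log_{pn}n)$ more days. For (i), note that the edges at $v$ are i.i.d.\ $\Ber(p)$ and the Day-$0$ colours are fixed, so $S_v(0)$ is a sum of $n-1$ independent $\{-1,0,1\}$ variables with $\E{S_v(0)}=p(2\Delta\mp1)=\Theta(p\Delta)$ and $\Var{S_v(0)}=(n-1)p(1-p)$. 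A Berry--Esseen estimate gives $\Pr[S_v(0)>0]-\tfrac12=\Theta\!\big(p\Delta/\sqrt{np(1-p)}\big)$ --- the hypothesis $p\Delta\ge10$ is precisely what makes this ``signal'' dominate the Berry--Esseen error $\O(1/\sqrt{np(1-p)})$ --- while a local-CLT bound makes the tie probability $\Pr[S_v(0)=0]=\O(1/\sqrt{np(1-p)})$ negligible once summed against the bias $2\Delta$. Hence $\E{\Delta_1}=\Theta\!\big(\min\{\Delta\sqrt{np},\,n\}\big)$, which by $p\Delta\ge10$ is $\ge\Omega(\sqrt{n/p})$. (Near $p=1$ one argues dually through the non-neighbours of $v$, where $pn\le n-10$ supplies enough randomness.)

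\textbf{Concentration of $\Delta_1$ and the covariance lemma.} To upgrade the mean bound I would control $\Var{\Delta_1}=\tfrac14\sum_{u,v}\Cov{x_u(1),x_v(1)}$. The diagonal contributes $\O(n)$. For $u\ne v$, condition on all edges except $\{u,v\}$: toggling that edge changes $x_u(1)$ only when $S_u(0)$ computed without it lies in a fixed $\O(1)$-window about $0$, an event of probability $\O(1/\sqrt{np(1-p)})$ that is \emph{measurable in the edges at $u$ alone}; likewise for $v$ in the edges at $v$. Those two events live on disjoint edge-sets, hence are independent, and the standard $2\times2$ computation gives $\abs{\Cov{x_u(1),x_v(1)}}=\O\!\big(p(1-p)\cdot\tfrac{1}{np(1-p)}\big)=\O(1/n)$, so $\Var{\Delta_1}=\O(n)$. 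Chebyshev then yields $\Delta_1\ge c_1\min\{\Delta\sqrt{np},\,n\}$ with probability $1-\O\big(n/(\Delta\sqrt{np})^2\big)=1-\O(1/(p\Delta^2))$ --- this is the origin of the $1/\min\{n,p\Delta^2\}$ term (the $\O(1/n)$ part arising from the later steps) --- and on this event the normalised gap $\Delta_1\sqrt{p/n}$ exceeds an absolute constant.

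\textbf{Amplification and unanimity.} Conditioning on the Day-$1$ colouring (running the covariance lemma conditionally again keeps the relevant variance $\O(n)$), a typical vertex now sees a Red surplus of order $\sqrt{np}$ among its $\sim pn$ neighbours and so is Red on Day $2$ except with probability $e^{-\Omega(1)}$; hence $\card{B_2}\le\gamma_0 n$ for a small absolute constant $\gamma_0$, with probability $1-\O(1/n)$. Once $\card{B_t}\le\gamma_0 n$, a vertex of degree $d$ lies in $B_{t+1}$ with probability at most $e^{-d\,D(1/2\,\|\,\gamma_0)}$; splitting vertices by degree and using a Chernoff bound on how few can have degree far below $pn$ --- here $pn\ge(1+\lambda)\log n$ is essential, and is what costs the $n^{-\lambda/2}$ --- this sums to $\o(1)$ over all $v$, so the minority disappears. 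The remaining care is that the minority set never contains a subgraph in which every vertex has at least half its neighbours inside: above the connectivity threshold $G(n,p)$ has no such small dense subgraph, and any spread-out minority is eroded within $\O(\log_{pn}n)$ steps (the order of the diameter), which is where the stated time bound comes from; connectivity of $G(n,p)$, from the same Chernoff/union bound, makes all-Red absorbing.

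\textbf{Main obstacle.} The genuine difficulty is the \emph{reuse of randomness}: once the Day-$1$ colouring is exposed the edges are no longer independent, so the clean Berry--Esseen and second-moment estimates above cannot be quoted verbatim on later days. My plan is to neutralise this with the same device as the covariance lemma --- condition on all but one edge and exploit that a single edge's influence on two different vertices' updates is carried by \emph{disjoint}, hence independent, edge-sets --- which bounds every conditional variance by $\O(n)$ regardless of the day. The secondary delicate point, absorbing the low-degree ``stragglers'' that live near the connectivity threshold, is exactly what forces $pn\ge(1+\lambda)\log n$ rather than merely $pn\ge\log n$ and yields the $n^{-\lambda/2}$ loss.
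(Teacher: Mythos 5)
Your Step 1 (Day 1) is essentially the paper's Step 1: Berry--Esseen for $\E{|B_1|}$, a covariance bound of order $1/n$ obtained by isolating the single edge $\{u,v\}$, and Chebyshev. The covariance mechanics differ slightly (you condition on all edges but $\{u,v\}$ and argue via a narrow window around a tie; the paper conditions on the event $u\adj v$ and factors $\Pr(u,v\in B_1)$ into a product of two events living on disjoint edge sets, then bounds the resulting difference with a local Berry--Esseen estimate), but these are interchangeable and both yield $\Var{|B_1|}=\O(n)$.

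The genuine gap is in Steps 2 and 3. You write that after Day 1 you will ``condition on the Day-1 colouring'' and again treat each $S_v(1)$ as a sum of fresh $\Ber(p)$ edge indicators, and later that a vertex of degree $d$ lies in $B_{t+1}$ with probability $\le e^{-dD(1/2\,\|\,\gamma_0)}$. Neither of these can be run with the stated randomness. Once $B_1$ is exposed, the edges are no longer i.i.d.\ conditionally: membership of $v$ in $B_1$ is itself a strong statement about the edges at $v$ (more than half of $v$'s Day-0 neighbours are Blue), so the conditional law of $S_v(1)$ is not a sum of independent $\Ber(p)$'s, and the Chernoff/Berry--Esseen estimates do not transfer. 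Your proposed fix --- reuse the ``condition on all but one edge'' device --- addresses only the pairwise covariance structure, not the fact that the conditional marginals themselves are corrupted; it works on Day 1 precisely because the Day-0 colouring is deterministic and independent of the graph. You flag this as ``the genuine difficulty'' but the proposal does not actually resolve it, and the Day-2 and Day-$t$ claims inherit the gap.

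The paper avoids conditioning entirely at those stages and that is the essential missing idea. For Day 2 it proves a uniform statement about the graph (Lemma \ref{lem:hypo-blue-bound}): for \emph{every} possible current Blue set $B$ of size $\le b_1 n$ and \emph{every} candidate next-day Blue set $S$ of size $> b_2 n$, the probability that $S\subset B_1$ given $B_0=B$ is exponentially small, and a double union bound over $(B,S)$ kills the $\binom{n}{b_1 n}\binom{n}{b_2 n}$ factor; no conditioning on the realised $B_1$ is needed. For Day 3 onward it proves a \emph{deterministic} shrinking inequality (Lemma \ref{lem:hypo-blue-bound-2}), valid for any colouring with $|B_t|\le bn$, of the form $|B_{t+1}|\bigl(\overline d_{B_{t+1}}\bigr)^2\le \|A_G-x\one\one^T\|_{op}^2\cdot\frac{1-b}{(1/2-b)^2}|B_t|$, and then only conditions on two high-probability graph events (operator-norm bound $\O(\sqrt{pn})$ and minimum degree $\Omega(pn)$, the latter costing the $n^{-\lambda/2}$) to convert this into geometric decay with ratio $\O(1/(pn))$, which is exactly the source of the $\O(\log_{pn}n)$ time. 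Your sketch contains neither the union-bound-over-colourings device nor the spectral contraction lemma, and your ``no small dense minority subgraph'' and ``diameter'' remarks are stated without proof and would themselves require an argument insulated from conditioning on the evolving colouring. A minor bookkeeping point: in the paper the entire $1/\min\{n,p\adv^2\}$ term comes from Day 1, and the $n^{-\lambda/2}$ term from the Day-$3$-onward graph properties; Day 2 contributes only $e^{-\Omega(n)}$, not the $\O(1/n)$ you attribute to later steps.
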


The key point here is that $\adv$ only depends directly on $p$ and not $n$.
In the special case when $p$ is a constant, $\adv$ needs only to be a constant for Red to win with probability arbitrarily close to $1$, thus this theorem extends the ``Power of Few'' phenomenon in \cite{TranVuDense2020}.

\subsubsection*{Notation for asymptotics}
Throughout the paper, we write $a = O(b)$ for two
quantities $a, b > 0$ that depend on the same set of variables, which includes the variable $m$,
if $a \le Cb$ for a universal constant $C$, and $a = O_m(b)$ if $C = C_m$ only depends on $m$.
We write $a = o_m(b)$ if $a \le c_m b$, where $c_m$ only depends on $m$ and $\lim_{m\to \infty} c_m = 0$.
Note that the second asymptotics has to involve a variable that grows to $+\infty$, however, since
the order of the graph $n$ plays this role most of the time, we often use $a = o(b)$ in place of $a = o_n(b)$.
This semantics apply analogously for $a = \Omega(b)$, $a = \Omega_m(b)$, $a = \Theta(b)$, $a = \Theta_m(b)$, $a = \omega_m(b)$ and $a = \omega(b)$.

\begin{remark}
	Note that, while  we assume the initial coloring is fixed before the graph is drawn, we can study the process in the following  equivalent models:
	
	\textbf{Model 1.} First draw a $G(n, p)$ random graph over $V$, then uniformly sample a $(n/2 + \adv)$-subset of $V$ to color Red and color the rest Blue.

    \textbf{Model 2.} (For even $n$) First fix an arbitrary even partition $(R, B)$ of $V$, namely one such that $|R| = |B|$,
    then draw a $G(n, p)$ random graph over $V$, finally uniformly sample a $\adv$-subset of ``defectors'' from $R$ and move them to $B$.
	
	\textbf{Model 3.} (For even $n$) First draw a $G(n, p)$ random graph over $V$, then uniformly sample an even partition $(R, B)$ of V and color them accordingly, finally uniformly sample a $\adv$-subset of ``defectors'' from $R$ and move them to $B$.
\end{remark}

In the next three subsections, we will discuss the applications of Theorem \ref{thm:main-nonlazy} in proving and strengthening some recent results on unanimity, its generalization to a more realistic setting, and some open questions and conjectures.

\subsection{Relations with the random coloring scheme}

Another process considered in the literature 
involves coloring every vertex of a random graph Red or Blue independently with probability $1/2$ each \cite{benjamini,fountoulakis,jhkim2021}, and then run the majority dynamics. 
We  refer to this as the \textbf{random $1/2$ coloring scheme}.
Benjamini, Chan, O'Donnell, Tamuz and Tan  \cite{benjamini} proved that in this coloring scheme, the side with the initial majority wins with probability at least $.4$ when $p\ge \lambda n^{-1/2}$, where $\lambda$ is a  sufficiently large constant. 
Fountoulakis, Kang and Makai  \cite{fountoulakis} improved the  winning probability to  $1 - \eps$ for any constant $\eps > 0$,  provided that $\lambda = \lambda_\eps \xrightarrow{\eps \to 0} +\infty$, and unanimity is achieved only in 4 days.


We can use Theorem \ref{thm:main-nonlazy} to give a quick proof for this result,  with some ``delay''.
\begin{theorem}
	For any fixed  $\eps > 0$, there is a constant $\lambda > 0$ such that: in majority dynamics on a $G(n, p)$ random graph with $p \ge \lambda n^{-1/2}$ and a random $1/2$ initial coloring, the larger side wins in $O(1)$ days with probability at least $1 - \eps$.
\end{theorem}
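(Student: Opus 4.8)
The plan is to obtain this as a quick corollary of Theorem~\ref{thm:main-nonlazy}, by conditioning on the size of the Red class. Write $\adv \defeq \card{R} - n/2$; under the random $1/2$ colouring $\card{R}\sim\Bin(n,1/2)$, so $\adv$ has mean $0$ and standard deviation $\tfrac12\sqrt n$. The first step is an anti-concentration estimate: by the central limit theorem, with a Berry--Esseen error of size $O(n^{-1/2})$, for every $\eta>0$ there is a constant $\delta=\delta(\eta)>0$ with $\Pr\bigl[\,\abs{\adv}\ge\delta\sqrt n\,\bigr]\ge 1-\eta$ for all large $n$. I would fix $\eta\defeq\eps/2$, take the corresponding $\delta$, and then set $\lambda\defeq\max\{1,\,10/\delta\}$; the order here matters, since a smaller $\delta$ forces a larger $\lambda$.

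Next I would condition on the event $\{\abs{\adv}\ge\delta\sqrt n\}$ and on the exact value of $\card{R}$. By the colour symmetry of majority dynamics and of the random $1/2$ colouring it suffices to treat $\adv\ge\delta\sqrt n$, in which case Red is the larger side; the complementary case $\adv\le-\delta\sqrt n$ is identical with the roles of the two colours exchanged. Conditioned on $\card{R}=n/2+\adv$, the colouring is uniform over all choices of an $(n/2+\adv)$-subset of $V$ to be Red --- this is exactly the first equivalent model listed in the Remark, hence the fixed-colouring setting of Theorem~\ref{thm:main-nonlazy}. I would also reduce to the range $\lambda n^{-1/2}\le p\le 1/2$: for $p>1/2$ we are in the dense regime, where the ``Power of Few'' theorem of \cite{TranVuDense2020} already gives unanimity of the larger side within $O(1)$ days as soon as the bias exceeds a fixed constant, which holds here because $\abs{\adv}\ge\delta\sqrt n\to\infty$.

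It then remains to verify the hypotheses of Theorem~\ref{thm:main-nonlazy} and read off the consequences. For $n\ge 20$ we have $pn\le n/2<n-10$; since $pn\ge\lambda\sqrt n\ge\sqrt n$ and $\sqrt n\ge 2\log n$ for large $n$, the connectivity hypothesis $pn\ge(1+\lambda')\log n$ holds with the constant $\lambda'=1$; and $p\adv\ge \lambda n^{-1/2}\cdot\delta\sqrt n=\lambda\delta\ge 10$ by the choice of $\lambda$. So Theorem~\ref{thm:main-nonlazy} applies and gives a Red unanimity within $O(\log_{pn}n)$ days with failure probability $O\bigl(1/\min\{n,\,p\adv^2\}+n^{-1/2}\bigr)$. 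In this regime $\log_{pn}n=\log n/\log(pn)\le\log n/(\tfrac12\log n)=2$, so the number of days is actually at most $2=O(1)$; and $p\adv^2\ge\lambda\delta^2\sqrt n\to\infty$, so the failure probability drops below $\eps/2$ for large $n$. Undoing the conditioning in the first two paragraphs, the larger side wins in $O(1)$ days with probability at least $1-\eps/2-\eps/2=1-\eps$.

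I do not anticipate any real obstacle: all the depth is in Theorem~\ref{thm:main-nonlazy}, and the present argument is bookkeeping --- converting $p\ge\lambda n^{-1/2}$ together with the $\Theta(\sqrt n)$ bias of a uniform colouring into the hypotheses $p\adv\ge 10$ and $pn\ge(1+\lambda')\log n$, and observing that the $O(\log_{pn}n)$ time bound of Theorem~\ref{thm:main-nonlazy} collapses to the constant $2$ here. The only two places that call for a bit of care are (i) the order of the choices of $\delta$ and then $\lambda$ as functions of $\eps$, and (ii) carving out the narrow range of $p$ near $1$, which violates the hypothesis $pn\le n-10$ but is covered by the earlier dense result. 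The ``delay'' relative to \cite{fountoulakis} is only that we do not optimise the (constant) number of days, nor the rate at which $\lambda=\lambda_\eps\to\infty$.
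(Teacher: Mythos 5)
Your proof is correct and follows essentially the same route as the paper: condition on $\adv$ being at least $\Theta(\sqrt n)$ via a CLT/Berry--Esseen anti-concentration estimate, verify the hypotheses of Theorem~\ref{thm:main-nonlazy}, and note that $\log_{pn} n \le 2$ once $p \ge n^{-1/2}$. You are in fact slightly more careful than the paper's own short proof, which quietly ignores the upper hypothesis $pn \le n-10$ of Theorem~\ref{thm:main-nonlazy}; your separate treatment of $p > 1/2$ via the dense ``Power of Few'' result closes that small gap.
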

\begin{proof}
	Let $R$ and $B$ respectively be the Red and Blue camps after the initial coloring and $\adv$ the initial gap.
	The quantity $2\adv = ||R| - |B||$ is a sum of independent Rademacher variables, with mean $0$ and variance $n$, so by the central limit theorem (see Corollary \ref{lem:strongesseen}),  for any $\eps > 0$, $\adv \ge C_\eps\sqrt{n}$ for some constant $C_\eps > 0$ with probability at least $1 - \eps/2$.
	Conditioned on a fixed instance of $(R, B)$ satisfying this, Therem \ref{thm:main-nonlazy} implies that if $p \ge 10\adv^{-1}$ and $p\adv^2 \xrightarrow{n \to \infty} \infty$, the majority side wins in $O(\log_{pn}n)$ days with probability $1 - o(1)$.
	Both conditions are satisfied when $p \ge (10/C_\eps)n^{-1/2}$. The winning time is $O(\log_{pn}n) = O(1)$, so by choosing $\lambda_\eps = 10/C_\eps$ we complete the proof.
\end{proof}

A careful inspection of Theorem \ref{thm:main-nonlazy}'s proof reveals that the winning time can be taken to be at most 5 days, which is very close to the original theorem in \cite{fountoulakis}.

If one considers a more general \emph{random $(p_r, p_b)$ coloring scheme}, where the probability for a vertex to be Red is $p_r$ and Blue $p_b = 1 - p_r$, then the initial gap is larger for which unanimity can be  proven for lower $p$.  Zehmakan in 2019 \cite{zehmakan2019} reached the threshold $p = (1 + \lambda)n^{-1}\log n$ for such a biased random coloring, proving that Red wins with probability $1 - o(1)$ if $p_r = 1/2 + \omega((pn)^{-1/2})$, i.e. Red has a bias $\omega((pn)^{-1/2})$. Our theorem \ref{thm:main-nonlazy} implies  that a  smaller bias 
$\omega((pn)^{-1})$ suffices.

\begin{theorem}
	On a $G(n, p)$ random graph where $p \ge (1 + \lambda)n^{-1}\log n$ for a constant $\lambda > 0$, majority dynamics with a random $(p_r, p_b)$ initial coloring with $p_r = 1/2 + \omega((pn)^{-1})$ terminates with a Red unanimity with probability $1 - o(1)$.
\end{theorem}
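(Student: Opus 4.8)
The plan is to derive this from Theorem~\ref{thm:main-nonlazy}, in the spirit of the reductions already carried out above; the only new point is a concentration bound for the gap produced by a biased coloring. Write $p_r = \tfrac12 + h$, so that the hypothesis becomes $h\,pn \to \infty$. Under the random $(p_r,p_b)$ scheme the Red camp satisfies $\card{R}\sim\Bin(n,p_r)$, hence the initial gap $\adv := \card{R} - n/2$ has mean $\E{\adv} = nh$ and variance $\Var{\adv} = np_r(1-p_r)\le n/4$. Since $p\le 1$ we have $nh\ge pnh\to\infty$, and in fact $p\cdot\E{\adv} = pnh\to\infty$, so the mean gap dominates $1/p$.

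The core of the argument is the claim that $\adv \ge \tfrac12 nh$ with probability $1-\o(1)$. Granting this, any such pair $(R,B)$ meets both hypotheses of Theorem~\ref{thm:main-nonlazy}: indeed $p\adv \ge \tfrac12 pnh \to\infty$, so $p\adv\ge 10$ for $n$ large, and $p\adv^2 \ge \tfrac14 pn^2h^2 = \tfrac14(pnh)(nh)\to\infty$ because both factors tend to infinity, hence also $\min\{n,p\adv^2\}\to\infty$. Conditioning on any fixed instance of $(R,B)$ with $\adv\ge\tfrac12 nh$ and invoking Theorem~\ref{thm:main-nonlazy}, Red reaches unanimity (within $\O(\log_{pn}n)$ days) with probability at least $1-\O\!\bigl(1/\min\{n,p\adv^2\}+n^{-\lambda/2}\bigr)=1-\o(1)$. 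Averaging over these instances and adding the $\o(1)$ from the concentration claim completes the proof.

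The work is therefore entirely in the claim that $\adv\ge\tfrac12 nh$ with high probability, and this is where I expect the main obstacle. Chebyshev's inequality gives $\Pr[\adv<\tfrac12 nh]\le \Var{\adv}/(\tfrac12 nh)^2\le 1/(nh^2)$, which is $\o(1)$ exactly when $nh^2\to\infty$, i.e.\ when $h\gg n^{-1/2}$. Near the connectivity threshold this is automatic: there $pn$ is only polylogarithmic, so $h\gg(pn)^{-1}\gg n^{-1/2}$, and the same estimate covers every $p=\O(n^{-1/2})$. When $p$ is much larger than $n^{-1/2}$ the realized gap can fail to be positive with probability bounded away from $0$, so in that range the statement really rests on the bias-amplification effect of the dynamics itself: one preliminary round multiplies the bias by a factor of order $\sqrt{pn}$ while the sampling fluctuations stay of order $\sqrt n$, and one then applies Theorem~\ref{thm:main-nonlazy} to the configuration reached after that round. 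Carrying this out rigorously --- in particular disentangling the post-round colors from the part of the graph already revealed, exactly the kind of staged-exposure argument used in the dense case and in \cite{fountoulakis} --- is the delicate point.
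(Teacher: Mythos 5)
Your reduction is exactly the one the paper uses: concentrate the realized gap $\adv=\card{R}-n/2$ and invoke Theorem~\ref{thm:main-nonlazy}. The paper's own proof simply asserts, ``by the Central Limit Theorem or Chernoff bound, it is easy to see that with probability $1-o(1)$ \dots\ $\adv=\omega(1/p)$,'' and so it contains precisely the gap you flagged. As you compute, the Chebyshev (or CLT) step needs $\sqrt{n}\,h\to\infty$, while $h=\omega((pn)^{-1})$ only forces this once $p=\O(n^{-1/2})$; in the threshold regime $pn=\operatorname{polylog}(n)$, which is the regime the theorem is really about, $(pn)^{-1}\gg n^{-1/2}$ and your argument closes the proof in the same way the paper intends. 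Your observation that something is off for larger $p$ is correct and not a blemish of your write-up: take $p$ constant and $h=(\log n)/n$, still $\omega((pn)^{-1})$; then $\E{\adv}=\log n$ is swamped by the $\Theta(\sqrt n)$ fluctuations, $\Pr(\adv>0)\to 1/2$, and on the event $\adv<0$ the same Theorem~\ref{thm:main-nonlazy} (applied with the roles of the colors swapped) gives a Blue unanimity, so the statement as literally written fails.

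Your proposed rescue via a preliminary amplification round, however, cannot be made to work, so you should not expect the ``delicate point'' to resolve favorably. The day-one shrinkage of Lemma~\ref{lem:day1} multiplies the entire realized gap $\adv_0=nh+\xi$, with $\xi=\Theta(\sqrt n)$, by roughly $\sqrt{pn}$ (adding fresh $\O(\sqrt n)$ noise): $\adv_1\approx c\sqrt{pn}\,\adv_0+\O(\sqrt n)$. It does not selectively boost the deterministic part $nh$ while leaving the sampling noise at scale $\sqrt n$; the post-round signal-to-noise ratio is still $\Theta(\sqrt n\,h)$. This is exactly why the random-$1/2$-coloring results (Fountoulakis--Kang--Makai, and the dense case) conclude only that the \emph{initial majority} side wins; when $\sqrt n\,h\to 0$ that majority is essentially a fair coin flip, and no staged-exposure argument changes the sign of $\adv_0$. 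The correct fix is to restate the hypothesis as $h=\omega\bigl(\max\{(pn)^{-1},\,n^{-1/2}\}\bigr)$, after which your Chebyshev argument (and the paper's) is complete.
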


	By the Central Limit Theorem or Chernoff bound, it is easy to see that  with probability $1 - o(1)$, $|R| > |B|$ and the initial gap $\adv = (|R| - |B|)/2$ is $\omega(1/p)$.
	Conditioned on a fixed instance of $(R, B)$ satisfying this, we have $p\adv > 10$ and $p\adv^2 = \omega(1/p) = \omega(1)$, so we deduce by Theorem \ref{thm:main-nonlazy} that Red wins with probability $1 - O((p\adv^2)^{-1}) = 1 - o(1)$, proving the theorem.

\subsection{``Power of Few''  with random activations and updates}

In real-life situations, not everyone on a social network activates or updates their account on a daily basis. To make the model more realistic, we introduce random \emph{activation rate} $\pac$ and \emph{updating rate} $\pup$. 


%
%

\begin{definition}[Majority Dynamics with random activations and updates]
	Given a graph $G = (V, E)$, \emph{majority dynamics with activation rate $\pac$ and updating rate $\pup$} on $G$ is a process that works as follows. Beginning with Day 0, the vertices are colored either Red or Blue. For each $t\ge 0$, on day $t + 1$, the following takes place:
    \begin{itemize}
        \item Each vertex activates with probability $\pac$, independently.
        Neighboring vertices that are both active observe each other's color on day $t$.
        Inactive vertices neither observe nor are observed by anyone.

        \item Each active vertices then chooses to update its status with probability $\pup$, again independently.
        Inactive vertices do not update by default.

        \item Each updating vertex adopts the majority among the colors it observed or retains its day $t$ color in case of a tie.
    \end{itemize}
\end{definition}

It is clear that in this more general setting, unanimity is still permanent once it occurs.
We show that the ``Power of Few'' phenomenon remains robust here.


\begin{theorem}[The Power of Few with random activations and updates] \label{thm:main-lazy}
	Consider the majority dynamics process with fixed activation rate $\pac$ and updating rate $\pup$ on a $G(n, p)$ random graph, with $n/2 + \adv$ initial Red vertices.
	Assume $p = p(n)$ satisfies $p\pac n \ge (1 + \lambda)n^{-1}\log n$ for a constant $\lambda > 0$.
	Then there is a constant $C$ depending only on $\pac$ and $\pup$ such that when $p\adv \ge C$, a Red unanimity is reached within time $O_{\pac,\pup}(\log n)$ with probability at least
	\begin{equation*}
		1 - O\left(\frac{1}{\min\{n, p\adv^2\}}\right) - O(n^{-\lambda/4}) - n^{-\Omega_{\pac,\pup}(1)}.
	\end{equation*}
\end{theorem}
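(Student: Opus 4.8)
The plan is to rerun, with two extra independent layers inserted, the three-phase analysis behind Theorem~\ref{thm:main-nonlazy}: a \emph{seeding} phase (Day~$0\to1$), a geometric \emph{amplification} phase, and a \emph{mop-up} phase. The one structural observation that makes this go through is that, for a fixed vertex $v$ and a fixed day, the event ``$u$ is a neighbour of $v$ \emph{and} $u$ is active'' is a $\Ber(p\pac)$ event, and these events (over $u$), together with $v$'s own update coin, are mutually independent. Hence, conditioned on $v$ updating, the sign of the (Red minus Blue) count that $v$ sees is governed by a difference of two independent binomials with parameter $p\pac$ --- exactly the eager picture with $p$ replaced by the \emph{effective density} $p\pac$. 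This is why the hypothesis becomes $p\pac n\ge(1+\lambda)\log n$, and why every growth rate below picks up only constant factors depending on $\pac,\pup$, which we absorb into $C$.

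First I would condition on the Day~$0$ colouring (equivalently, use one of the three models in the Remark). For the \textbf{seeding} step, applying the Berry--Esseen estimate of Corollary~\ref{lem:strongesseen} to the difference of binomials above shows that, conditioned on $v$ updating, $v$ turns Red with probability at least $1/2+c\min\{1,\adv\sqrt{p\pac/n}\}$ for an absolute constant $c>0$, \emph{provided} $p\pac\adv$ exceeds an absolute constant --- which is exactly where the hypothesis $p\adv\ge C$, with $C=C(\pac,\pup)$ large, is used, since the standardized bias $\Theta(\adv\sqrt{p\pac/n})$ must dominate the $O((p\pac n)^{-1/2})$ Berry--Esseen error. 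The $(1-\pac\pup)$ fraction of vertices that do not update retain their Day~$0$ colour, which only reinforces Red. Consequently the Day~$1$ Red gap has mean of order $\pac\pup\,\adv\sqrt{p\pac n}$, and, by the concentration machinery from the proof of Theorem~\ref{thm:main-nonlazy} (now with the extra, independent activation layer --- see the obstacle below), it exceeds half its mean with probability $1-O(1/\min\{n,p\adv^2\})$, the ``$\min$ with $n$'' covering the regime $\adv\gtrsim\sqrt{n/(p\pac)}$ in which seeding already lands in the saturated regime treated in mop-up.

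For the \textbf{amplification} step, iterating the same estimate shows that if the Day~$t$ gap is $g_t$ in the range where the Gaussian-linear behaviour holds, then $g_{t+1}$ has mean of order $\pac\pup\sqrt{p\pac n}\,g_t$ --- the non-updating vertices again only help --- so the gap multiplies by a factor tending to infinity until it reaches a constant fraction $\eps n$; as in the proof of Theorem~\ref{thm:main-nonlazy} this takes $O(\log_{p\pac n}n)=O(\log n)$ days, and a union bound over them, together with control of the at most $n^{1-\Omega(\lambda)}$ near-threshold low-degree vertices lying below the binomial concentration used, costs $O(n^{-\lambda/4})$. Finally the \textbf{mop-up} step: once a $(1/2+\eps)$-fraction of $V$ is Red, any updating vertex with at least $\delta pn$ neighbours --- all but $n^{1-\Omega(\lambda)}$ of them --- sees a strict Red majority among its $\approx p\pac n\ge(1+\lambda)\log n$ active neighbours except with probability $n^{-\Omega(\lambda)}$, while a low-degree vertex all of whose (few) neighbours have already turned Red flips to Red whenever it updates and has an active neighbour, an event of probability at least $\pac^2\pup$. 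Hence (up to an $n^{-\Omega(\lambda)}$ failure per day) the Red set never shrinks and the Blue set shrinks by a factor $1-\Omega(\pac\pup)$ each day, so after $O(\log n)$ further days everyone is Red and stays Red; a union bound over the $n$ vertices and these $O(\log n)$ days contributes $n^{-\Omega(1)}$. Summing the three error contributions gives the stated bound, and the running time, dominated by mop-up, is $O(\log n)$.

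I expect the main obstacle to be the concentration used in the seeding and amplification steps. A naive bounded-differences argument is hopeless: a single edge, or a single activation bit, can flip $\Theta(pn)$ colours, so McDiarmid over the $\binom{n}{2}$ edges and the $n$ activation bits only controls fluctuations of order $pn^{3/2}$, far larger than the gap. One must instead reproduce the martingale argument from the proof of Theorem~\ref{thm:main-nonlazy}, which exploits the sign cancellations making the true fluctuation of the Red count only of order $\sqrt{n}$; the point to verify is that the additional activation layer --- independent of everything else and of the randomness on other days --- inflates the conditional-variance bound by at most a constant factor and changes no exponents. Everything else is routine bookkeeping: the constants are absorbed into $C(\pac,\pup)$, and the mild degradation from $n^{-\lambda/2}$ to $n^{-\lambda/4}$ together with the new $n^{-\Omega(1)}$ term come, respectively, from the near-threshold low-degree vertices and from the length-$O(\log n)$ mop-up.
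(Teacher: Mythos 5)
Your high-level three-phase skeleton and, most importantly, the ``effective density $p\pac$'' observation are exactly right and align with the paper's strategy (see Lemma~\ref{lem:day1-lazy} and the decomposition around Eq.~\eqref{eq:main-lazy-details-temp0}). However, there are two real problems with the proposal.

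First, you mischaracterize the Day-1 concentration. The paper does \emph{not} use a martingale argument: Lemma~\ref{lem:day1} and its lazy analogue Lemma~\ref{lem:day1-lazy} use Chebyshev's inequality with an explicit covariance bound (Proposition~\ref{prop:day1-exp-var}), where $\Cov{\one_{B_1}(u),\one_{B_1}(v)}$ is shown to be $O(1/n)$ by factoring through the near-independence of the two events and a Berry--Esseen point-mass estimate (Lemma~\ref{lem:esseen-err-term}). This is the sign cancellation you allude to, but the mechanism is second-moment, not martingale; a reader following your outline would go looking for a martingale that is not there.

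Second, and more seriously, your amplification phase iterates a Gaussian/Berry--Esseen estimate day over day: ``the Day $t$ gap $g_t$ maps to $g_{t+1}$ of mean $\Theta(\pac\pup\sqrt{p\pac n}\,g_t)$.'' This breaks on Day $2$ and later, because the Day-$t$ coloring is already a function of the graph and of past activation layers, so the summands in $\dif_t(v)$ are no longer independent Bernoullis and Corollary~\ref{lem:strongesseen} does not apply. The paper avoids this dependence entirely: for Day 2 it uses Lemma~\ref{lem:hypo-blue-bound}, a union bound over all colorings of a given size establishing a \emph{deterministic} graph property (Blue cannot jump above $b_2 n$ from below $b_1 n$) that holds with probability $1-e^{-\Omega(n)}$; from Day 3 on it uses the deterministic spectral inequality of Lemma~\ref{lem:hypo-blue-bound-2} conditioned on the graph property $\|A_G-p\mathbf{1}\mathbf{1}^T\|_{op}\le C\sqrt{pn}$ and on a minimum-degree bound. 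In the lazy setting the paper also needs the explicit split $\card{B_{t+1}}=\card{B_t\setminus\upd{B_t}}+\card{\upd{\hypo{B}_t}}$ (non-updating Blues stay Blue and contribute $(1-\pup\pac)\card{B_t}$, they do not ``help''; Blue shrinks only because the updating set shrinks), and it then invokes the eager Lemmas~\ref{lem:step2}/\ref{lem:step3} on the induced active subgraph $G(\ac{V_t})$. Your proposal would need a replacement for the naive iterated-CLT step; as written it has a gap that cannot be patched by constant-factor bookkeeping.
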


\begin{remark} \label{remark:main-thm-nonlazy-application}
	Despite seeming more general, Theorem \ref{thm:main-lazy} does not  directly imply \ref{thm:main-nonlazy}, due to different convergent times. 
	
  If we let $\pup = \pac = 1$ in Theorem \ref{thm:main-lazy}, then the setting is identical to Theorem \ref{thm:main-nonlazy}'s. However,  the former only implies convergence to unanimity in $O(\log n)$ days, while the latter gives a shorter time $O(\log_{pn}(n)) = o(\log n)$. 
  In particular, when  $pn \ge n^\alpha$ for some $\alpha \in (0, 1)$, Theorem \ref{thm:main-nonlazy} shows unanimity in $O(1/\alpha) = O(1)$ days, while the convergence time in Theorem \ref{thm:main-lazy} remains stuck at $O(\log n)$.
\end{remark}

\subsection{Open questions}

We describe some open questions and conjectures, for which we also present supporting evidence obtained through experiments.
The source codes to all experiments found in this section can be found in the Github repository \href{https://github.com/thbl2012/The_Power_of_Few_sparse_case.git}{\texttt{The\_Power\_of\_Few\_sparse\_case}}\cite{simulations2023}.

\subsubsection{``Optimal'' condition for unanimity}

The most interesting open question here is to determine the best value of ``Few'': 

\begin{question}  \label{qn:optimal-power-of-few}
	Given a density $p = p(n) \ge (1 + \lambda)n^{-1}\log n$, what is the optimal  initial gap $\adv = \adv(n)$ for which Red wins with probability arbitrarily close to $1$?
\end{question}
Let $\adv_{min}(n, p, \eps)$ be the smallest $\adv$ so that $\Pr(\text{Red wins}) \ge 1 - \eps$.
Theorem \ref{thm:main-nonlazy} gives an upper bound, namely $\max\{10p^{-1}, C_\eps p^{-1/2}\}$ for a sufficiently large constant $C_\eps$. When $p = o_n(1)$, this bound reduces to $10 p^{-1}$.  We conjecture that 
$p^{-1/2} $ is the right order of magnitude, as shown in the following experiment.
\begin{experiment}
[\href{https://github.com/thbl2012/The_Power_of_Few_sparse_case/blob/88f7b5d3b3dacfbdd29e12f0968c6bc7ce97f3e4/optimal_power_of_few.py}{\texttt{optimal\_power\_of\_few.py}} in \cite{simulations2023}]  \label{exp:optimal-power-of-few}
    We run the process with $n = 10^4$, $pn\in [9.3, 30]$, and $\Delta = fp^{-1/2}$ for $f\in \{1, 2, 3, 4, 5\}$.
For each pair $(p, f)$, we run $1000$ trials and record the winning percentage of Red.
The result shows that the winning rate stays mostly constant for each value of $p\adv^2$ as $p$ varies.
\end{experiment}
\begin{center}
    \includegraphics[height=2in]{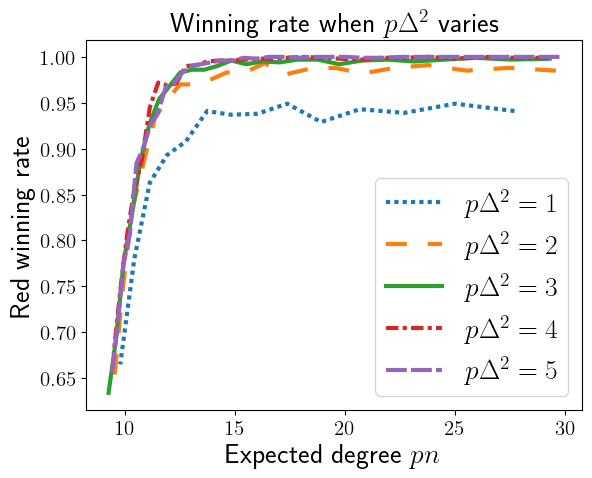}
\end{center}

Note that for $n = 10^4$, $\log n \approx 9.21$, so we only tested this phenomenon above the connectivity threshold.
The winning rate is already at least $.6$ at $pn = 9.3$, but it only stabilizes for $pn \approx 13$, as long as $p\Delta^2$ stays constant.
A further look into the data shows that in the case $9.3 < pn < 13$, in more than $90\%$ of the trials, Blue only retains at most 3 vertices, which is almost identical to a Red unanimity, barring one or two isolated Blue vertices or edges.

\begin{conjecture}[Optimal Power of Few]  \label{conj:optimal-power-of-few}
There is a function $f(x)$ such that 
$f(x) > 1/2$ for $x >0$, and $\lim_{x\to\infty} f(x) = 1$ such that the majority dynamics on $G(n, p)$ with initial advantage $\adv$ for Red and $p \ge (1 + \lambda)n^{-1}\log n$ ends unanimously Red with probability $f(\adv\sqrt{p}) - o_n(1)$. 
\end{conjecture}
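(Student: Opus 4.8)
The plan is to reduce the conjecture to a sharp distributional statement about the Day~$1$ gap $\adv_1\defeq\tfrac12(\card{R_1}-\card{B_1})$ and then to read off $f$. Conditioning on the Day~$0$ colouring, write $s_u\in\{\pm1\}$ for the colour of a vertex $u$, and for $v$ let $Z_v$ be the number of Red neighbours of $v$ minus the number of Blue neighbours; then $Z_v$ is a difference of two independent binomials with mean $(1+\o(1))\,2p\adv$ and variance $(1+\o(1))\,pn$, the vertex $v$ is Red on Day~$1$ exactly when $Z_v>0$ (the tie event $Z_v=0$, of probability $\O((pn)^{-1/2})$, contributes only to lower order and in fact cancels against the $\pm p$ self-adjustments in the mean of $Z_v$), so $\adv_1=\sum_v\bigl(\one[Z_v>0]-\tfrac12\bigr)+(\text{negligible})$. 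First I would compute, from the local CLT for $Z_v$,
\[
\E{\adv_1}=(1+\o(1))\,\adv\sqrt{2pn/\pi},\qquad \Var{\adv_1}=(1+\o(1))\,\tfrac n4 .
\]
The variance is governed by the independent-coin term $\sum_v\Var{\one[Z_v>0]}\approx n/4$, because $\one[Z_v>0]$ and $\one[Z_w>0]$ depend on the graph only through incident edges and share the \emph{single} edge $\{v,w\}$, so $\Cov{\one[Z_v>0],\one[Z_w>0]}=\O(1/n)$ with sign proportional to $s_vs_w$; summing these over all pairs yields only $\O(1)$, thanks to the cancellation $\sum_{v\neq w}s_vs_w=(2\adv)^2-n=\o(n)$ in the regime $\adv=\o(\sqrt n)$ (larger $\adv$ being trivial). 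A central limit theorem for sums with a sparse dependency graph (Stein's method, the dependency graph being $G$ itself) then gives $\adv_1/\sqrt n\xrightarrow{d}\mathcal N\bigl(\sqrt{2/\pi}\,\adv\sqrt p,\ \tfrac14\bigr)$, a law depending on $(\adv,p)$ only through $\adv\sqrt p$.

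The second ingredient is \emph{absorption}: I claim $\Pr(\text{Red wins})=\Pr(\adv_1>0)+(\text{lower order})$, so that $\Pr(\text{Red wins})\to\Phi\bigl(\sqrt{8/\pi}\,\adv\sqrt p\bigr)=:f(\adv\sqrt p)$, which satisfies $f(x)>\tfrac12$ for $x>0$ and $f(x)\to1$. For $p$ bounded below this is in essence the argument of \cite{TranVuDense2020}: conditioned on $\adv_1>0$ one has $\adv_1=\Theta(\sqrt n)$ with high probability, hence $p\adv_1\ge10$, so Theorem~\ref{thm:main-nonlazy} applies and Red wins with probability $1-\o(1)$; symmetrically for $\adv_1<0$; and $\Pr(\card{\adv_1}\le K)=\O(K/\sqrt n)=\o(1)$ for fixed $K$, since $\adv_1$ has standard deviation $\Theta(\sqrt n)$. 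For $p=\o(1)$ one must instead propagate the Gaussian approximation through the $\O(\log_{pn}n)$ days the gap needs to reach the threshold of Theorem~\ref{thm:main-nonlazy}, showing $\adv_{t+1}=(1+\o(1))\sqrt{2pn/\pi}\,\adv_t+\mathcal N\bigl(0,(1+\o(1))\tfrac n4\bigr)$ at each step: the growth factor $\sqrt{pn}\to\infty$ makes the gap, once positive, stay positive and explode, and one checks that the cumulative probability of a sign reversal strictly after Day~$1$ is of strictly smaller order than $\Pr(\adv_1<0)$. A genuine difficulty already surfaces here: after Day~$1$ the evolving colouring and the graph $G$ are correlated, so the propagation requires exposing the graph in stages, or quantifying that dependence as in \cite{fountoulakis,zehmakan2019}.

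The main obstacle, though, is to make every error term of smaller order than the Gaussian tail $\bar\Phi\bigl(\sqrt{8/\pi}\,\adv\sqrt p\bigr)$, which is of order $\exp\bigl(-\Theta(p\adv^2)\bigr)$. Theorem~\ref{thm:main-nonlazy} supplies only the \emph{polynomial} failure bound $\O\bigl(1/(p\adv^2)\bigr)$, which dwarfs this tail, so the sharp conjecture requires (a)~a moderate-deviations refinement of the Day~$1$ CLT, namely $\Pr(\adv_1<0)=(1+\o(1))\,\bar\Phi\bigl(\sqrt{8/\pi}\,\adv\sqrt p\bigr)$, uniformly for $p$ all the way down to the connectivity threshold (where the local CLT for $Z_v$ degrades and needs care), and (b)~an exponentially---not polynomially---accurate analysis of the snowball phase: conditioned on $\adv_1=a>0$, the probability that Red nevertheless loses must be shown to be $(1+\o(1))\,\bar\Phi\bigl(\sqrt{8/\pi}\,a\sqrt p\bigr)$ and to be driven by a single reversal at the next step, recursively. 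Item~(b) is essentially a sharp one-dimensional diffusion approximation for the gap process that must also cope with the colouring--graph correlation, and this is where the bulk of the work and the real difficulty lie. A final, minor wrinkle: the $(1-p)$ factor hidden in $\E{\adv_1}=\adv\sqrt{2pn/(\pi(1-p))}$ shows that $f$ is a function of $\adv\sqrt p$ alone only in the sparse limit $p=\o(1)$; for $p$ bounded below the conjecture should be read as $\Pr(\text{Red wins})=f(\adv\sqrt p)+\o(1)$ with $f$ the limiting profile $\Phi\bigl(\sqrt{8/\pi}\,\cdot\bigr)$.
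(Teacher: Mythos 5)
This statement is a \emph{conjecture}, not a theorem: the paper does not prove it, and explicitly records that it is only partially confirmed (by Sah and Sawhney, and only in the dense regime $p \ge (\log n)^{-1/16}$). So there is no ``paper's proof'' to compare against, and what you have written is not a proof either --- as you yourself say. Your sketch is a sensible heuristic derivation of the conjectured profile $f(x)=\Phi\bigl(\sqrt{8/\pi}\,x\bigr)$, and the Day-1 moment computation ($\E{\adv_1}\approx\adv\sqrt{2pn/\pi}$, $\Var{\adv_1}\approx n/4$) is consistent with the Gaussian-approximation estimates the paper uses in Lemma~\ref{lem:day1}. You also correctly identify the core obstruction: Theorem~\ref{thm:main-nonlazy} yields only a polynomial failure probability $O\bigl((p\adv^2)^{-1}\bigr)$, which is vastly larger than the Gaussian tail $\exp\bigl(-\Theta(p\adv^2)\bigr)$ that the conjecture demands, so the paper's machinery cannot close the argument and a much sharper, exponentially accurate control of the post-Day-1 snowball phase is needed.

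But the two items you flag as ``the bulk of the work'' --- (a) a moderate-deviations refinement of the Day-1 CLT uniform down to the connectivity threshold, and (b) an exponentially accurate analysis of the gap recursion that copes with the graph--colouring dependence that arises after Day~1 --- are not technicalities to be filled in; they are exactly the open problem. Absent these, the sketch confirms that the conjecture is consistent with a Gaussian heuristic but does not establish it. In short: you have not found a gap in an argument, you have correctly identified why this remains a conjecture.
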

 
 Sah and Sawhney \cite{sahsawhney2021}, improving a result by Devlin and Berkowitz  \cite{devlinberkowitz2022},
 proved the "Power of One" conjecture by the authors \cite[Conjecture 7]{TranVuDense2020}.
 A corollary of their main result
 confirms Conjecture \ref{conj:optimal-power-of-few} when $p$ tends to zero very slowly, namely
 $\log^{-1/16} n \le p $.
 The cases for smaller $p$ remain open.

This conjecture is related to a question concerning the random $1/2$ coloring scheme from the previous subsection:
{\it What is the smallest $p$ for which unanimity occurs with probability arbitrarily close to $1$?} 
The latest development concerning this question is a result 
by Chakraborti, Kim, Lee, and Tran \cite{jhkim2021}, which lowers the required $p$ from $\omega(n^{-1/2})$ in \cite{fountoulakis} to $\Omega(n^{-3/5}\log n)$.
Conjecture \ref{conj:optimal-power-of-few}, if true, gives the ultimate answer $p \ge (1 + \lambda)n^{-1}\log n$ for any constant $\lambda > 0$.

Incidentally, a conjecture by Benjamini et. al.  \cite{benjamini} predicts that for 
$p = \omega(n^{-1})$, then for any constant $\eps > 0$, one color will have at least $(1 - \eps)n$ vertices with probability $1 - o_n(1)$.
The condition $p = \omega(n^{-1})$, when combined with the fact that the random $1/2$ coloring scheme results in an initial difference of size $\Theta(n^{-1/2})$, essentially means the same as $\adv = \omega(p^{-1/2})$.

It is also natural to generalize the above conjectures with random activations and updates. It looks 
plausible that Conjecture \ref{conj:optimal-power-of-few}, if true, will also be robust in this more general model (in the spirit of Theorem \ref{thm:main-lazy}).



\subsubsection{The balanced initial coloring}

Another corollary of Sah's and Sawhney's \cite{sahsawhney2021} is that when $\Delta = 0$ and $p = \omega(\log^{-1/16}n)$, the side with the majority after the first day wins with probability $1 - o_n(1)$.
It is natural to ask what the lowest possible value of $p$ should be (in terms of $n$) for this to hold with high probability.

\begin{experiment}
[\href{https://github.com/thbl2012/The_Power_of_Few_sparse_case/blob/88f7b5d3b3dacfbdd29e12f0968c6bc7ce97f3e4/balanced_initial_colors.py}{\texttt{balanced\_initial\_colors.py}} in \cite{simulations2023}]
\label{exp:balanced-initial-coloring}
    We run the dynamic for $n = 10^4$, $pn \in [9, 19]$ (note that $\log n \approx 9.2$), and $\Delta = 0$ ($5000$ vertices on each side). Each trial stops when the process has entered a period of length $1$ or $2$ (whichever comes first).
    For each such $p$, we run $300$ trials, and record the stopping time and the sizes of the Red and Blue camps at that time.
\end{experiment}
\begin{center}
    \includegraphics[height=1.4in]{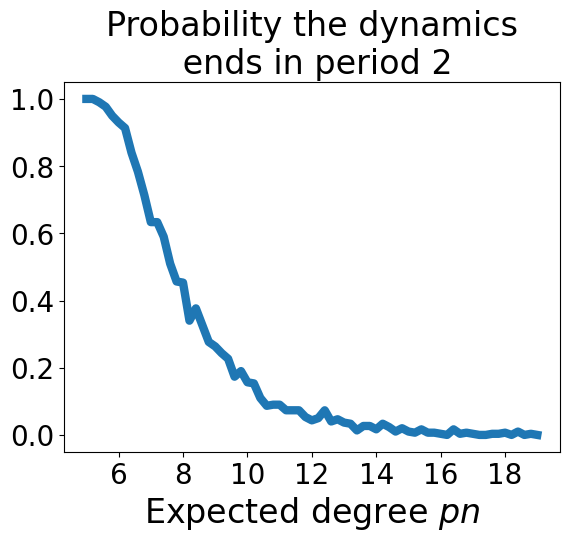}
    \quad
    \includegraphics[height=1.4in]{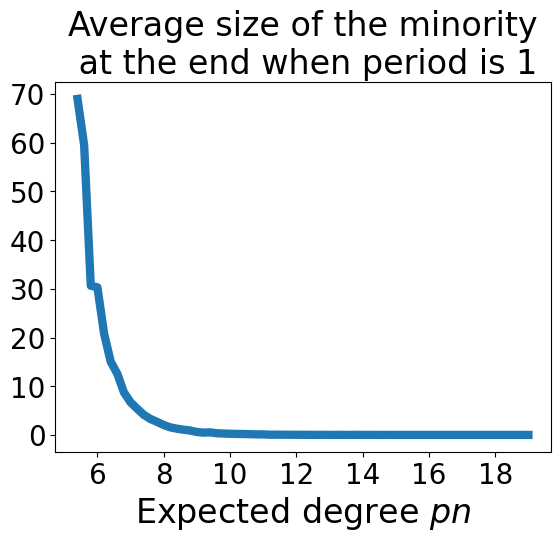}
    \quad
    \includegraphics[height=1.4in]{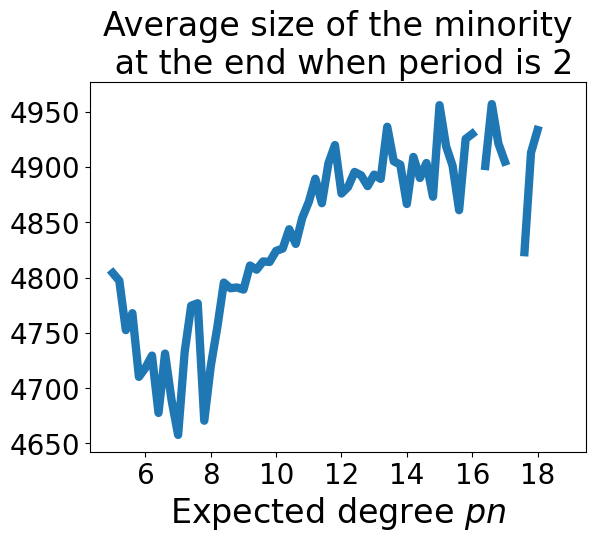}
\end{center}
The leftmost graph shows that the probability of ending up in period 2 declines continuously from near $1$ to near $0$ when $pn \ge 14$, meaning there are no abrupt transitions at the connectivity threshold.
The two other graphs reveal that the process's behavior changes drastically between the two possible periodicities.
When the period is $1$, one side either wins or takes almost every vertex except the isolated ones of the other color.
When the period is $2$, the coloring stays nearly balanced.
\begin{conjecture}  \label{conj:balanced-initial-coloring}
    Consider majority dynamics on a $G(n, p)$ random graph
    that begins with
    $n/2$ vertices for each color.
    Let $\mathcal{C}_1$ be the event that process ends in a stable coloring
    , namely one which agrees entirely with its previous coloring
    .
    Then when $pn = c\log n$, we have
    \[
    \Pr(\mathcal{C}_1) = f(c) - o_n(1),
    \]
    where $f$ is an increasing function such that $f(x) \xrightarrow{x\to 0^+} 0$, $f(x) \xrightarrow{x\to \infty} 1$ and $f(1) > .9$.
    Moreover, let $t^*$ be the smallest day such that $B_{t^*} = B_{t^* - 2}$, then for a universal constant $C$,
    \[
    \Pr\left(\min\{\card{R_{t^*}}, \card{B_{t^*}}\} \le Cne^{-pn}\mid \mathcal{C}_1\right) = 1 - o_n(1),
    \]
    and for any $\eps \in (0, 1)$,
    \[
    \Pr\left(\min\{\card{R_{t^*}}, \card{B_{t^*}}, \card{R_{t^* - 1}}, \card{B_{t^* - 1}}\} \ge (1/2 - \eps)n\mid \neg\mathcal{C}_1\right) = 1 - o_n(1).
    \]
\end{conjecture}
We believe that the period 2 is caused the emergence of a special structure in the graph, which also keeps both sides balanced with high probability, but we do not know what such a structure might be.

\subsubsection{The number of Blues in the giant component}

Another natural question is: how many Blue vertices still remain for $p < n^{-1}\log n$?
This number is at least the number of isolated Blue vertices, which is around $|B_0|e^{-pn}$.
However, there are also larger connected components that can retain a significant number of Blues.
\begin{experiment}
[\href{https://github.com/thbl2012/The_Power_of_Few_sparse_case/blob/88f7b5d3b3dacfbdd29e12f0968c6bc7ce97f3e4/colors_on_components.py}{\texttt{colors\_on\_components.py}} in \cite{simulations2023}]
\label{exp:blues-at-end}
    We run the dynamic for $n = 10^4$, $pn \in [1, 10]$ (since $\log n\approx 9.2$) with $|R_0| = 7000$, $|B_0| = 3000$, stopping whenever a period of $1$ or $2$ is reached.
    For each $p$, we run $500$ trials, and record the number of Blues in the beginning and the end in each connected component.
\end{experiment}
As observed in Experiment \ref{exp:balanced-initial-coloring}, the portion of non-isolated vertices in the Blue camp at the end go to $0$ in the regime $p = cn^{-1}\log n$, thus we restrict our attention to the case $1 < d = pn = O(1)$.
In this regime, a unique giant component
of order
$n(1 - O(e^{-d}))$
emerges with high probability.
Denote this component by $G^*$, the graphs below show the results in $G^*$, where $t^*$ is the ``ending day'' defined in Conjecture \ref{conj:balanced-initial-coloring}.
\begin{center}
    \includegraphics[height=1.8in]{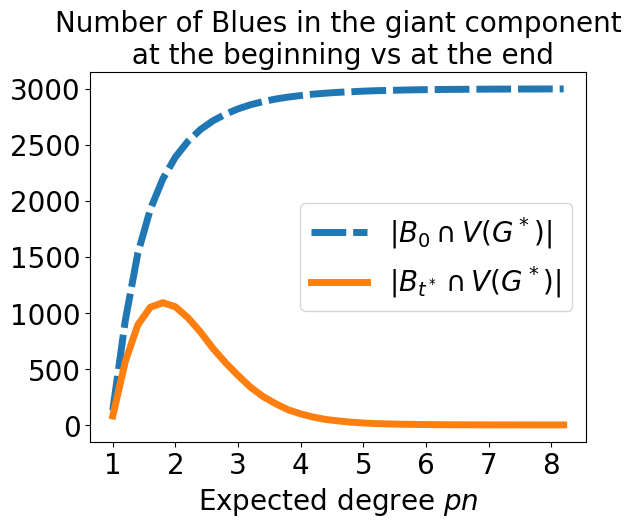}
    \quad
    \includegraphics[height=1.8in]{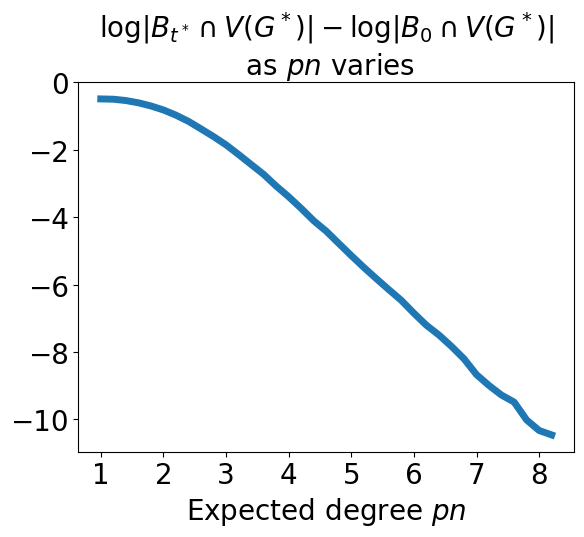}
\end{center}
The plot of the log ratio of $\card{B_{t^*} \cap V(G^*)}$ over $\card{B_0 \cap V(G^*)}$ shows an almost linear relationship.
We conjecture that this is the case.
\begin{conjecture}  \label{conj:blue_end_giant_log_ratio}
    Consider majority dynamics on a $G(n, p)$ random graph with $\eps n \le \card{B_0} \le (1/2 - \eps)n$ for some constant $\eps \in (0, 1/2)$.
    When $pn = d$ for a constant $d > 1$, there is a constant $a > 1$, and a polynomial $P$ where $P(x) > 0$ for $x > 0$, both independent of $\eps$ and $d$, such that
    \[
    \card{B_{t^*} \cap V(G^*)} \le \card{B_0 \cap V(G^*)}P(pn)e^{-apn},
    \]
    with probability $1 - o_n(1)$, where $t^*$ is the smallest day such that $B_{t^*} = B_{t^* - 2}$.
\end{conjecture}
We expect $a$ to be greater than 1 because the number of Blues in the giant component will be dominated by the number of isolated Blues when $pn$ is large.

In the rest of the paper, we present the proofs  of Theorems \ref{thm:main-nonlazy} and \ref{thm:main-lazy}.
In Section \ref{sec:p<log(n)/n}, we state and prove some results about majority dynamics using our technique when $p$ is below the connectivity threshold.

\section{Proof of Theorem \ref{thm:main-nonlazy}} \label{sec:main-nonlazy-proof}

\subsection{Overview}

The proof uses several notations, which we describe below, and several technical lemmas, whose proofs are put in
Appendices \ref{sec:prelim-lemmas} and \ref{sec:shrink-scheme}.

\begin{itemize}[topsep=0pt]
	\item $v\adj u$ denotes the event that $u$ and $v$ are neighbors.

    \item $\Neigh(u)$ denotes the set of neighbors of the vertex $u$.
	
	
	\item $\ca{N}(\mu, \sigma^2)$ denotes the Normal distribution with mean $\mu$ and variance $\sigma^2$.
	
	
	\item $\Phi(a) \defeq (2\pi)^{-1/2}\displaystyle\int_{-\infty}^a e^{-x^2/2}dx$ and $\Phi_0(a) \defeq \Phi(a) - 1/2$.
	
	\item $\Bin(n, p)$ denotes the binomial distribution of size $n$ and probability $p$.
 
	\item $\Ber(p)$ denotes the Bernoulli distribution with probability $p$.
	
	

    \item $\Cbe = .56$ is the constant in the latest version of the Berry-Esseen Theorem (Theorem \ref{thm:esseen}).

    \item When three terms $X, Y, E$ satisfy $|X - Y|\le E$, we write $X = Y \pm E$.
\end{itemize}

Additionally, we use $\Clr$ to denote a coloring, namely a function from $V$ to $\{-1, 1\}$, so that the Red and Blue camps are respectively given the notations $R \defeq \Clr^{-1}(1)$ and $B \defeq \Clr^{-1}(-1)$.
Given $\Clr$, we also define a function $\dif: V\to \Z$ that sends each $v$ to the numerical difference between its Red neighbors and Blue neighbors:
\begin{equation} \label{eq:dif-formula}
    \dif(v) \defeq \card{R\cap \Neigh(v)} - \card{B\cap \Neigh(v)} = \sum_{u\in V} \Clr(u)\mathbf{1}_{u\adj v},
\end{equation}
To denote the coloring on day $t$, we use $\Clr_t$, and accordingly, $R_t$, $B_t$ and $\dif_t$.

The proof of Theorem \ref{thm:main-nonlazy} has three steps, spanning the next 3 subsections.
\begin{enumerate}
	\item With probability at least $1 - O\left(\min\{n, p\adv^2\}^{-1}\right)$, the number of Blue nodes shrinks below $n/2 - \Omega\left(\min\left\{n, \adv\sqrt{pn}\right\}\right)$ after the first day.
	
	\item With probability at least $1 - e^{-\Omega(n)}$, Blue shrinks below $n/2 - \Omega(n)$ after the second day.
	
	\item With probability at least $1 - O(n^{-\lambda/2})$, $G$ has ``nice'' properties that allow Blue to shrink repeatedly until no Blue nodes are left.
\end{enumerate}
We begin with Step 1 in the next section.

\subsection{Step 1 - First day} \label{sec:day1}

Below is the main technical result of this section.

\begin{lemma} \label{lem:day1}
	For every $0 < c \le \min\bigl\{p\adv, \sqrt{p(1 - p)n}\bigr\}$ and $D\in \mathbb{R}$ define
	\begin{equation} \label{eq:temp-D0}
		D_c \defeq \Phi_0\left(2 - \frac{1}{c}\right) - \frac{\Cbe}{c}
		\quad \text{ and } \quad
		C_{\ref{lem:day1}}(c, D) \defeq \frac{7}{12\left(D_c - D \right)^2}.
	\end{equation}
	For every $0 < D < D_c$, if $\card{B_0} \le n/2 - \adv$ then
	\[
	\Pr\left(\card{B_1} \le \frac{n}{2} - D\min\{n, \adv\sqrt{pn}\}\right) \ge 1 - \frac{C_{\ref{lem:day1}}(c, D)}{\min\{n, p\adv^2\}}.
	\]
\end{lemma}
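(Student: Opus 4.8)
The plan is to write $\card{B_1}$ as a sum of indicators, bound its expectation via the Berry--Esseen theorem, control its variance by exploiting that those indicators are pairwise almost independent, and conclude with Chebyshev's inequality. (If $D_c\le 0$ there is no admissible $D$ and the statement is vacuous, so assume $D_c>0$, which in particular forces $c>\tfrac12$.) Write $M:=\min\{n,\adv\sqrt{pn}\}$ and $\card{B_1}=\sum_{v\in V}\xi_v$, where $\xi_v$ is the indicator that $v$ is Blue on day $1$; since the initial colouring is fixed, $\xi_v$ depends only on the edges incident to $v$. If $v\in R_0$ then $\xi_v=\mathbf{1}[\dif_0(v)\le-1]$ and if $v\in B_0$ then $\xi_v=\mathbf{1}[\dif_0(v)\le 0]$, where $\dif_0(v)=\sum_{u\ne v}\Clr_0(u)\mathbf{1}_{u\adj v}$ is a sum of $n-1$ independent variables bounded by $1$, with variance $\sigma_v^2=(n-1)p(1-p)\le pn$ and summed third absolute moment at most $\sigma_v^2$. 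Using $\card{R_0}-\card{B_0}\ge 2\adv$ and $p\le 1$ one checks that in both cases $\{\xi_v=1\}\subseteq\{\dif_0(v)-\E{\dif_0(v)}\le -2p\adv\}$, so Theorem~\ref{thm:esseen} gives, uniformly in $v$,
\[
\E{\xi_v}\ \le\ \Phi\!\left(-\frac{2p\adv}{\sigma_v}\right)+\frac{\Cbe}{\sigma_v}\ =\ \frac{1}{2}-\Phi_0\!\left(\frac{2p\adv}{\sigma_v}\right)+\frac{\Cbe}{\sigma_v}.
\]
From $\sigma_v\le\sqrt{pn}$ and $2-\tfrac{1}{c}\le 2$ one gets the elementary inequality $\tfrac{2p\adv}{\sigma_v}\ge\bigl(2-\tfrac{1}{c}\bigr)\tfrac{M}{n}$ (and even $\tfrac{2p\adv}{\sigma_v}\ge 2-\tfrac{1}{c}$ once $\adv\sqrt{pn}\ge n$); since $\Phi_0$ is increasing and concave on $[0,\infty)$ with $\Phi_0(0)=0$, this yields $\Phi_0\bigl(\tfrac{2p\adv}{\sigma_v}\bigr)\ge\tfrac{M}{n}\,\Phi_0\bigl(2-\tfrac{1}{c}\bigr)$. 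Summing over $v$ and absorbing the error $\Cbe/\sigma_v$ against the target — this is where $c\le\sqrt{p(1-p)n}$ is used, supplemented by a Bernstein estimate in the degenerate range where $p(1-p)n$ is small compared to $p^2\adv^2$ and $\E{\card{B_1}}$ is already negligible — one obtains $\E{\card{B_1}}\le \frac{n}{2}-D_c M$.

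For the variance, write $\Var{\card{B_1}}=\sum_v\Var{\xi_v}+\sum_{u\ne v}\Cov{\xi_u,\xi_v}$; the first sum is at most $\frac{n}{4}$ since $\Var{\xi_v}\le\frac{1}{4}$. For distinct $u,v$, conditioning on the single indicator $\mathbf{1}_{u\adj v}$ makes $\xi_u$ and $\xi_v$ independent (they then involve disjoint sets of edges), so $\Cov{\xi_u,\xi_v}=p(1-p)\,\theta_u\theta_v$, where $\theta_w$ is the difference of the two conditional probabilities of $\{\xi_w=1\}$. Each $|\theta_w|$ is at most the largest point probability of $\dif_0(w)$ after deleting the $\{u,v\}$-term, which is $\O(1/\sigma_w)=\O(1/c)$ by a concentration-function bound for sums of independent bounded lattice variables. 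Since $c^2\le p(1-p)n$, summing over the $\O(n^2)$ pairs gives $\sum_{u\ne v}\Cov{\xi_u,\xi_v}\le\frac{n}{3}$, hence $\Var{\card{B_1}}\le\frac{7}{12}n$. As $M^2=n\min\{n,p\adv^2\}$, Chebyshev's inequality now gives
\[
\Pr\!\left(\card{B_1}>\frac{n}{2}-D M\right)\ \le\ \frac{\Var{\card{B_1}}}{(D_c-D)^2 M^2}\ \le\ \frac{7}{12(D_c-D)^2\,\min\{n,p\adv^2\}},
\]
which is the asserted bound with $C_{\ref{lem:day1}}(c,D)=\frac{7}{12(D_c-D)^2}$.

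The step I expect to be most delicate is the expectation bound — matching the Berry--Esseen error $\Cbe/\sigma_v$ to the deficit $D_c M$ in the regime $p\adv^2\ll n$. There one must retain the true size of the standardised gap $2p\adv/\sigma_v$ rather than its crude lower bound, and, when $p(1-p)n$ is small compared to $p^2\adv^2$ so that the normal approximation carries too large an error, replace it by a Bernstein estimate forcing $\E{\card{B_1}}$ to be negligible. A secondary technical point is that obtaining the variance constant $\frac{7}{12}$, rather than merely $\O(n)$, requires the sharp concentration-function bound $\max_k\Pr\bigl(\dif_0(w)=k\bigr)\le(1+\o(1))/\sqrt{2\pi\sigma_w^2}$ in place of the weaker one implied by Berry--Esseen.
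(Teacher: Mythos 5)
Your overall plan is the same as the paper's: decompose $\card{B_1}$ into indicators, use Berry--Esseen for the expectation, a conditioning trick on $\mathbf{1}_{u\adj v}$ for the pairwise covariances, and conclude with Chebyshev. The variance and Chebyshev steps match the paper essentially line for line (including the correct exponent $M^2 = n\min\{n,p\adv^2\}$).

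There is, however, a genuine gap in the expectation bound, precisely in the regime $p\adv^2 < n$ that you yourself flag as ``delicate.'' Your chain of inequalities gives
\[
\E{\xi_v} \le \frac12 - \frac{M}{n}\,\Phi_0\!\left(2-\frac{1}{c}\right) + \frac{\Cbe}{\sigma_v},
\]
and to conclude $\E{\card{B_1}}\le n/2-D_cM$ you need $\Cbe/\sigma_v \le (M/n)\Cbe/c$, i.e.\ $c\le (M/n)\sigma_v$. When $M=n$ this reduces to $c\le\sigma_v\approx\sqrt{p(1-p)n}$ and holds. But when $M=\adv\sqrt{pn}<n$ it becomes $c\le \adv p\sqrt{(1-p)(n-1)/n}$, while the hypothesis only gives $c\le p\adv$; the required inequality fails as soon as $p>0$. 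The point is that the Berry--Esseen error $\Cbe/\sigma_v$ stays of order $1/\sqrt{pn}$, while the target deficit $D_cM/n$ shrinks like $\adv\sqrt{p/n}$, so no Bernstein estimate in the range $p(1-p)n\ll p^2\adv^2$ (which is the \emph{opposite} regime, $p\adv^2\gg n$, where your absorption already works) can rescue it.

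The paper avoids this by not applying concavity and subtracting the error in two separate steps. Instead it uses $\Phi_0(x)\ge\Phi_0(y)\min\{x/y,1\}$ and, crucially, moves $\Cbe/\sigma_v$ \emph{inside} the resulting minimum before simplifying. In the small-gap branch one gets a single fraction
\[
\frac{x\,\Phi_0(y)/y - \Cbe/\sigma_v\cdot\sigma_v}{\sigma_v}
= \frac{p\adv}{\sigma_v}\left(\frac{2\Phi_0(y)}{y}-\frac{p\,\Phi_0(y)/y+\Cbe}{p\adv}\right),
\]
and now the $\Cbe$ term is compared against $p\adv\ge c$ rather than against $\sigma_v$; choosing $y=2-1/c$ makes the bracket collapse exactly to $D_c$. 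This ``distribute the error into the min, then factor out $p\adv/\sigma_v$'' manipulation is the missing idea; your concavity step throws away the fraction $x/y$ too early, after which the error can no longer be matched. To repair the proof you would apply the paper's two-branch argument (or its algebraic equivalent) before absorbing $\Cbe/\sigma_v$; the $\min$-inequality and the choice $y=2-1/c$ are not cosmetic but load-bearing.

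Two smaller remarks. Your observation that one should use a sharp concentration-function bound $\max_k\Pr(\dif_0(w)=k)\le(1+\o(1))/\sqrt{2\pi\sigma_w^2}$ rather than a crude Berry--Esseen point estimate is fair (and in fact the paper's own invocation of Lemma~\ref{lem:esseen-err-term} there is tighter than what that lemma literally states). And replacing $p(2\adv-1)$ by $2p\adv$ in the standardised gap is harmless here — carrying the paper's trick through with $2p\adv$ still yields the constant $D_c$, because the second branch of the $\min$ then gives $2\Phi_0(y)/y-\Cbe/c\ge\Phi_0(y)-\Cbe/c$ with slack — but that simplification alone does not rescue the absorption.
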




As mentioned in the overview, we can prove Lemma \ref{lem:day1} by combining Chebyshev's inequality with the following proposition.
\begin{proposition} \label{prop:day1-exp-var}
	For each $0 < c \le \min\{p\adv, \sqrt{p(1 - p)n}\}$ we have
	\[
	\E{\card{B_1}} \le n/2 - D_c\min\{n, \adv\sqrt{pn}\}
	\quad \text{ and } \quad
	\Var{\card{B_1}} \le 7n/12.
	\]
\end{proposition}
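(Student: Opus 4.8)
The plan is to compute (or, rather, bound) the expectation and variance of $\card{B_1}$ directly, using the fact that on Day~1 each vertex $v$ becomes Blue essentially according to whether $\dif_0(v) < 0$ (with ties counting as "unchanged", but since we only want an upper bound on $\card{B_1}$ we may pessimistically treat every tie as going Blue, or handle ties by a separate small correction). Writing $\card{B_1} = \sum_{v\in V}\one_{v\text{ is Blue on day }1}$, linearity gives $\E{\card{B_1}} = \sum_v \Pr(v\text{ Blue on day }1)$. For a fixed $v$, the edges from $v$ are independent $\Ber(p)$ variables, so $\dif_0(v)$ is a shifted sum of independent $\pm1$-weighted Bernoullis: its mean is $\bigl(\card{R_0\setminus\{v\}} - \card{B_0\setminus\{v\}}\bigr)p \ge (2\adv - 1)p$ and its variance is at most $(n-1)p(1-p)$ (it equals $\big(\card{R_0\setminus v}+\card{B_0\setminus v}\big)p(1-p)$, adjusting by $1$ if $v$ is counted). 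The first key step is therefore a Berry--Esseen estimate: $\Pr(\dif_0(v)\le 0)$ is at most $\Phi\!\big(-\mu_v/\sigma_v\big) + \Cbe/\sigma_v$ by Theorem~\ref{thm:esseen}, where the third-moment ratio contributing the $\Cbe/\sigma_v$ term is controlled because the summands are bounded by $1$. One then optimizes the trade-off: either $\mu_v/\sigma_v \approx 2\adv p/\sqrt{pn} = 2\adv\sqrt{p/n}$ is already of constant-or-larger order (the "$\min$ saturates at $n$" regime, giving each vertex probability bounded away from $1/2$ by a constant, hence $\E{\card{B_1}}\le n/2 - \Omega(n)$), or it is small, in which case one Taylor-expands $\Phi_0$ near $0$ to get a per-vertex saving of order $\mu_v/\sigma_v$, summing to $\Omega(\adv\sqrt{pn})$. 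Introducing the auxiliary parameter $c \le \min\{p\adv,\sqrt{p(1-p)n}\}$ is precisely the bookkeeping device that unifies these two regimes: one replaces $\mu_v/\sigma_v$ by the worst case $\approx 1/c$ inside the $\Phi_0$ argument (hence the "$2 - 1/c$" — the $2$ presumably coming from $2\adv$ versus the $\adv$ appearing in $\adv\sqrt{pn}$), absorbs the Berry--Esseen error as $\Cbe/c$, and lands exactly on the stated $D_c$.

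For the variance bound, the plan is to write $\Var{\card{B_1}} = \sum_{u,v}\Cov{\one_{u\text{ Blue}},\one_{v\text{ Blue}}}$ and observe that the events "$u$ Blue on day 1" and "$v$ Blue on day 1" depend on disjoint sets of edges \emph{except} for the single shared edge $uv$. Conditioning on the presence/absence of that edge makes the two indicators independent, so each covariance term is $O(p)$ in magnitude — more precisely, $\Cov{\one_u,\one_v}$ is bounded by something like $p\cdot\big(\text{Lipschitz sensitivity of }\Pr(v\text{ Blue})\text{ in one edge}\big)$, which after a short computation is $O(1/n)$ per pair (a single edge shifts a mean-$\mu$, variance-$\sim pn$ sum, changing the tail probability by $O(1/\sqrt{pn})$, and the $p$ from the shared edge combines to $O(p/\sqrt{pn}) = O(\sqrt{p/n})$... one has to be a little careful to get the clean $7n/12$, but the diagonal contributes $\sum_v \Var{\one_v} \le n/4$ and the off-diagonal a comparable amount). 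The constant $7/12 = 1/4 + 1/3$ strongly suggests the diagonal gives $\le n/4$ and the off-diagonal sum is bounded by $\le n/3$ via this "one shared edge" decoupling. This is the step I expect to be the main obstacle: getting an \emph{unconditional} constant (not just $O(n)$) requires carefully bounding the one-edge influence uniformly over all vertices and all admissible parameter ranges, and the tie-handling (ties contribute to both mean and variance and must be shown not to spoil the constant) needs to be done honestly.

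Assembling: with $\E{\card{B_1}} \le n/2 - D_c\min\{n,\adv\sqrt{pn}\}$ and $\Var{\card{B_1}} \le 7n/12$ in hand, Chebyshev gives
\[
\Pr\!\left(\card{B_1} > \tfrac n2 - D\min\{n,\adv\sqrt{pn}\}\right) \le \frac{\Var{\card{B_1}}}{\big((D_c - D)\min\{n,\adv\sqrt{pn}\}\big)^2} \le \frac{7n/12}{(D_c-D)^2\min\{n,\adv\sqrt{pn}\}^2},
\]
and since $\min\{n,\adv\sqrt{pn}\}^2 \ge \min\{n^2, \adv^2 pn\} = n\min\{n, p\adv^2\}$, the $n$ cancels and we are left with $\dfrac{7}{12(D_c-D)^2\min\{n,p\adv^2\}} = \dfrac{C_{\ref{lem:day1}}(c,D)}{\min\{n,p\adv^2\}}$, which is Lemma~\ref{lem:day1}. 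So the whole argument reduces to Proposition~\ref{prop:day1-exp-var}, and within it the expectation bound is a Berry--Esseen-plus-Taylor computation while the variance bound is the delicate one-shared-edge covariance estimate described above.
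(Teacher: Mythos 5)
Your expectation argument is essentially the paper's: write $\E{\card{B_1}} \le \sum_v \Pr(\dif_0(v)\le 0)$, apply Berry--Esseen to the sum of i.i.d.\ $\pm\Ber(p)$ summands with $\E{\dif_0(v)}\ge p(2\adv-1)$ and $\Var{\dif_0(v)} = p(1-p)(n-1)$, and unify the two regimes via the parameter $c$; the paper's specific device is the comparison $\Phi_0(x)\ge\Phi_0(y)\min\{x/y,1\}$ evaluated at $y = 2 - 1/c$, which is exactly the $2 - 1/c$ you inferred. Your split $7/12 = 1/4 + 1/3$ (diagonal vs.\ off-diagonal) and the idea of conditioning on the single shared edge $uv$ to decouple the two indicators are also the paper's.

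There is, however, a structural error in your covariance heuristic. You bound $\Cov{\one_{B_1}(u),\one_{B_1}(v)}$ by ``$p\cdot\bigl(\text{one-edge sensitivity of }\Pr(v\text{ Blue})\bigr)$,'' arriving at $O(p/\sqrt{pn}) = O(\sqrt{p/n})$ per pair. That misses a factor of $1/\sqrt{pn}$: conditioning on $\one_{u\adj v}$ shows the covariance is not $p$ times one sensitivity but a \emph{product of two}. With $E_u \defeq \{u\in B_1 \mid u\not\adj v\}$, $F_u \defeq \{u\in B_1 \mid u\adj v\}$ (and likewise $E_v, F_v$), the decomposition gives
\[
\Cov{\one_{B_1}(u),\one_{B_1}(v)} = p(1-p)\bigl(\Pr(E_u)-\Pr(F_u)\bigr)\bigl(\Pr(E_v)-\Pr(F_v)\bigr),
\]
and each difference $\Pr(E_\cdot)-\Pr(F_\cdot)$ is (up to sign) a point-mass probability of a binomial-type sum, hence $O(1/\sqrt{pn})$ by Lemma \ref{lem:esseen-err-term}. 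The product is $O(1/(pn))$, so after the $p(1-p)$ prefactor each covariance is $O(1/n)$ --- which is what makes the off-diagonal sum come out to $\le n/3$ over $\binom{n}{2}$ pairs. Your $O(\sqrt{p/n})$ per pair would give an off-diagonal total of order $n^{3/2}\sqrt{p}$, which swamps $n/3$ whenever $p\gg 1/n$, so the heuristic as written does not close the bound. The missing idea is the symmetry: the shared edge must perturb \emph{both} $u$'s and $v$'s indicator to create correlation, so the $1/\sqrt{pn}$ cost is paid twice.
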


Assume, for now, Proposition \ref{prop:day1-exp-var}.
Let $m \defeq \min\{n, \adv\sqrt{pn}\}$.
For any $D < D_c$,
Chebyshev's inequality implies
\begin{equation}
	\begin{aligned}
		& \Pr\left(\card{B_1} \ge \frac{n}{2} - Dm\right)
		\le \Pr\left(\card{B_1} \ge \E{\card{B_1}} + (D_c - D)m\right)
		\le \frac{\Var{\card{B_1}}}{(D_c - D)^2m^2}.
	\end{aligned}
\end{equation}
The right-most expression can be simplified to $\frac{C_{\ref{lem:day1}}(c, D)}{\min\{n, p\adv^2\}}$, giving Lemma \ref{lem:day1}.

\begin{proof}[Proof of Proposition \ref{prop:day1-exp-var}]
	Write $\card{B_1} = \sum_{v\in V}\one_{B_1}(v)$.
    We will bound $\card{B_1}$ using Chebyshev's inequality.
    We proceed by bounding $\E{\card{B_1}}$ and $\Var{\card{B_1}}$.
    \emph{Step 1. Bounding the expectation.}
    We have
	\begin{equation} \label{eq:blue1-exp}
		\E{\card{B_1}} = \sum_{v\in V}\E{\one_{B_1}(v)} \le \sum_{v\in V}\Pr(\dif_0(v) \le 0).
	\end{equation}
	Recall that by Eq. \ref{eq:dif-formula},
	$
	\dif_0(v) = \sum_{u\in V} \Clr_0(v)\mathbf{1}_{u\adj v}
	$,
	where $\{\mathbf{1}_{u\adj v}\}_{u\in V}$ are i.i.d. $\Ber(p)$ variables.
	By Gaussian approximation (Corollary \ref{lem:strongesseen} in Appendix \ref{sec:prelim-lemmas}), we have
	\[
	\Pr(\dif_0(v) \le 0)
	\le \frac{1}{2} - \Phi_0\left(\frac{\E{\dif_0(v)}}{\sqrt{\Var{\dif_0(v)}}}\right)
	+ \frac{\Cbe}{\sqrt{\Var{\dif_0(v)}}}.
	\]
	Calculations show
	$\E{\dif_0(v)} \ge p(2\adv - 1)$ and $\Var{\dif_0(v)} = p(1 - p)(n - 1)$.
	Using the inequality $\Phi_0(x) \ge \Phi_0(y)\min\{x/y, 1\}$, we have for arbitrary $y > 0$:
	\[
	\begin{aligned}
		\E{\one_{B_1}(v)}
		& \le \frac{1}{2} - \min \left\{1, \frac{p(2\adv - 1)}{y\sqrt{p(1 - p)n}}\right\} \Phi_0(y) + \frac{\Cbe}{\sqrt{p(1 - p)n}} \\
		& = \frac{1}{2} - \min \left\{\Phi_0(y) - \frac{\Cbe}{\sqrt{p(1 - p)n}}, \ \frac{2\frac{\Phi_0(y)}{y}p\adv - \bigl(p\frac{\Phi_0(y)}{y} + \Cbe\bigr)}{\sqrt{p(1 - p)n}} \right\}.
	\end{aligned}
	\]
	Using the fact
    $c\le \min\{p\adv, \sqrt{p(1 - p)n}\}$,
    the RHS can be bounded further:
    \begin{equation*}
		\Phi_0(y) - \frac{\Cbe}{\sqrt{p(1 - p)n}} \ge \Phi_0(y) - \frac{\Cbe}{c},
    \end{equation*}
    and
    \begin{equation*}
		\frac{2\frac{\Phi_0(y)}{y}p\adv - \bigl(p\frac{\Phi_0(y)}{y} + \Cbe\bigr)}{\sqrt{p(1 - p)n}}
        \ge \left(2\frac{\Phi_0(y)}{y} - \frac{\Phi_0(y)/y + \Cbe}{c}\right) \frac{p\adv}{\sqrt{p(1 - p)n}}
	\end{equation*}
	Now let $y = 2 - c^{-1}$ so that the RHSs of the above two equations become $D_c$ and $D_c \frac{p\adv}{\sqrt{p(1 - p)n}}$ respectively.
	We have
	\[
	\E{\one_{B_1}(v)} \le \frac{1}{2} - D_c\min \left\{1, \frac{p\adv}{\sqrt{p(1 - p)n}} \right\}
	\le \frac{1}{2} - D_c\min \left\{1, \frac{\Delta\sqrt{p}}{\sqrt{n}} \right\}
	\]
	Using Eq. \ref{eq:blue1-exp}, we get the desired bound on $\E{\card{B_1}}$.

%
%

    \emph{Step 2. Upper-bounding the variance.}
	Firstly write
	\[
	\Var{\card{B_1}} = \sum_{v} \Var{\one_{B_1}(v)} + 2\sum_{u < v} \Cov{\one_{B_1}(u), \one_{B_1}(v)}.
	\]
	Now that $\Var{\one_{B_1}(v)} = \E{\one_{B_1}(v)}(1 - \E{\one_{B_1}(v)}) \le 1/4$ for each $v$, we are left to bound $\Cov{\one_{B_1}(u), \one_{B_1}(v)}$ for each pair $u < v$.
	Let us write
	\begin{equation} \label{eq:day1-cov-1}
		\Cov{\one_{B_1}(u), \one_{B_1}(v)} = \Pr(u, v\in B_1) - \Pr(u\in B_1)\Pr(v\in B_1).
	\end{equation}
	We can break up these three events into independent ones as follows.
	\begin{equation} \label{eq:day1-cov-2}
		\Pr\bigl(u, v\in B_1\bigr)
		= p\Pr\bigl(u, v\in B_1\mid u\adj v\bigr) + (1 - p)\Pr\bigl(u, v\in B_1\mid u\not\adj v\bigr).
	\end{equation}
	Let $F_u \defeq \bigl\{u\in B_1 \big| u\adj v\bigr\}$ and
    similarly
    for $F_v$.
	$F_u$ and $F_v$ are independent since $u$ and $v$ have no influence over the other's neighbors in $V\setminus\{u, v\}$.
	
 The second component in the RHS of Eq. \ref{eq:day1-cov-2} is thus
	$
	\Pr(F_u)\Pr(F_v)
	$.
	By the same reasoning, let $E_u \defeq \bigl\{u\in B_1 \big| u\not\adj v\bigr\}$ and
    similarly
    for $E_v$, then $E_u$ and $E_v$ are independent and the first component in the RHS of Eq. \ref{eq:day1-cov-2} is $\Pr(E_u)\Pr(E_v)$.
	
	The probabilities $\Pr(u\in B_1)$ and $\Pr(v\in B_1)$ can be broken down easily.
	We have $\Pr(u\in B_1) = p\Pr(E_u) + (1 - p)\Pr(F_u)$ and
    analogously
    for $\Pr(v\in B_1)$.
	Substituting these back into \ref{eq:day1-cov-2} and into \ref{eq:day1-cov-1} gives, after some algebraic manipulations:
	\[
	\Cov{\one_{B_1}(u), \one_{B_1}(v)} = p\left(1 - p\right) \left(\Pr(E_u) - \Pr(F_u)\right) \left(\Pr(E_v) - \Pr(F_v)\right).
	\]
	The term $\Pr(E_v) - \Pr(F_v)$ can be handled by looking at their raw meanings.
	Letting
	\[
	X = \dif_0(v) - \Clr_0(u)W_{uv} + \one_{B_0}(v),
	\]
	we see that $E_v \iff X \le 0$ and $F_v \iff X \ge -\Clr_0(u)$, so
	\[
	\begin{aligned}
		\Pr(E_v) - \Pr(F_v) = \Pr(X \le 0) - \Pr(X + \Clr_0(u) \le 0) = \Clr_0(u)\Pr\left(X = \frac{\Clr_0(u) - 1}{2}\right),
	\end{aligned}
	\]
	where the last equality can be reached by case analysis on $u$'s color.
	By Lemma \ref{lem:esseen-err-term} in Appendix \ref{sec:prelim-lemmas},
	\[
	\Pr(X = \Clr_0(u) - 1) \le \frac{\Cbe}{\sqrt{p(1 - p)(n-2)}}.
	\]
	We get the same inequality for $u$, therefore
	\[
	\Cov{\one_{B_1}(u), \one_{B_1}(v)} \le \frac{\Cbe^2}{n - 2} < \frac{1}{3n}.
	\]
	
	\noindent Finally we get:
	$ \displaystyle \
	\Var{\card{B_1}} < \sum_{v} \frac{1}{4} + \sum_{u < v} \frac{1}{3n} < \frac{n}{4} + \frac{n}{3} = \frac{7n}{12}.
	$
\end{proof}

The proof of Lemma \ref{lem:day1} is now complete and we consider Step 1 done.


\subsection{Step 2 - Second day} \label{sec:day2}

We first introduce one of the two main technical lemmas of this step (the other one being Lemma \ref{lem:hypo-blue-bound-2}).
It shows that if the gap is at least $C\sqrt{n}$ for a sufficiently large constant $C$, Blue will shrink with a factor less than one on the next day, regardless of which vertices are Blue today.

\begin{lemma} \label{lem:hypo-blue-bound}
	For fixed $x, y\in (0, 1/2)$ and $d > 0$, define
	\begin{align}
		h(x) & \defeq x\log x + (1 - x)\log (1-x), \label{eq:h-defn} \\
		F(d, x, y) & \defeq dx\left(1 - \frac{1}{2}y - \frac{3}{2}y^{1/3}(1 - y)^{2/3}\right) + 2h(x)
		\label{eq:F-defn}.
	\end{align}
	Then for any $0\le b_2\le b_1 \le 1/2 - 7/n$,
	$G$ is such that for any coloring with at most $b_1n$ Blue nodes, there are at most $b_2n$  nodes that can turn Blue the next day,
	except with probability at most $b_2^{-1}\exp\bigl[-nF(pn, b_2, b_1)\bigr]$.
\end{lemma}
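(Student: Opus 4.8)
The plan is a first-moment (union-bound) argument over the set $S$ of vertices that become Blue on day~$1$, combined with a large-deviation estimate. The decisive first move is a reformulation that avoids a union over the exponentially many candidate Blue sets. Because ties are frozen, a vertex $v$ can be Blue on day~$1$ only if at least half its neighbours are Blue on day~$0$, i.e.\ $|\Gamma(v)\cap B|\ge\deg(v)/2$. Hence if some coloring with Blue set $B$, $|B|\le b_1 n$, produces more than $b_2 n$ Blue vertices on day~$1$, there is a set $S$ with $|S|=\lceil b_2 n\rceil$ such that $|\Gamma(v)\cap B|\ge\deg(v)/2$ for every $v\in S$. Summing over $v\in S$ and writing $D_S:=\sum_{v\in S}\deg(v)$ gives
\[
\tfrac12 D_S\ \le\ \sum_{v\in S}|\Gamma(v)\cap B|\ =\ \sum_{u\in B}|\Gamma(u)\cap S|\ \le\ T_S:=\max_{|C|\le b_1 n}\ \sum_{u\in C}|\Gamma(u)\cap S| ,
\]
and $T_S$ is simply the sum of the $\lfloor b_1 n\rfloor$ largest ``codegrees'' $d_S(u):=|\Gamma(u)\cap S|$ — a function of $S$ and $G$ only, with the choice of $C$ (hence of $B$) already maximized away. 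So it suffices to show $\Pr\bigl(\exists S,\ |S|=\lceil b_2 n\rceil:\ T_S\ge\tfrac12 D_S\bigr)$ is at most the claimed quantity, which we do by a union bound over $S$ alone.

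Now fix $S$. Since also $D_S=\sum_{u\in V}d_S(u)$, the event $T_S\ge\tfrac12 D_S$ says that the top $\lfloor b_1 n\rfloor$ of the $n$ codegrees account for at least half of their total; as $b_1<\tfrac12$, this is a genuine large deviation (typically the top $b_1 n$ carry only a $\approx b_1$ fraction). I would estimate it with a threshold truncation: for any $\tau>0$,
\[
T_S\ \le\ (b_1 n)\,\tau\ +\ \sum_{u\in V}\bigl(d_S(u)-\tau\bigr)_+ ,
\]
which replaces the order statistic by a sum of essentially independent terms — the $d_S(u)$ for $u\notin S$ are i.i.d.\ $\Bin(\lceil b_2 n\rceil,p)$ with mean $\approx pb_2 n$, while the $u\in S$ contributions are weakly correlated but controlled through $\sum_{u\in S}d_S(u)=2e(S)$ (twice the number of edges inside $S$), which concentrates. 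Combining a Chernoff bound on $\mathbf E\bigl(\Bin(\lceil b_2 n\rceil,p)-\tau\bigr)_+$ with a Bernstein-type concentration for $\sum_u(d_S(u)-\tau)_+$ and for $D_S$, and then optimizing $\tau$ over the admissible range $\tau>\tfrac{pb_2 n}{2(1-b_1)}$, one expects the Cram\'er rate to collapse to the closed form
\[
\Pr\bigl(T_S\ge\tfrac12 D_S\bigr)\ \le\ \exp\Bigl(-\,n\cdot pn\,b_2\,\bigl(1-\psi(b_1)\bigr)\Bigr),\qquad \psi(y):=\tfrac12 y+\tfrac32 y^{1/3}(1-y)^{2/3} ;
\]
note $\psi$ is increasing on $(0,\tfrac12)$ with $\psi(\tfrac12)=1$, so the exponent is positive whenever $b_1<\tfrac12$, consistent with the hypothesis $b_1\le\tfrac12-7/n$.

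The last step is the union bound over the $\binom{n}{\lceil b_2 n\rceil}\le\exp(-n\,h(b_2))$ choices of $S$; together with a routine Markov/geometric-series bookkeeping step (which produces the polynomial prefactor $b_2^{-1}$) and a second, cruder entropy loss of $\exp(-n\,h(b_2))$ absorbed inside the concentration argument, this yields exactly $b_2^{-1}\exp\bigl(-nF(pn,b_2,b_1)\bigr)$ with $F$ as in \eqref{eq:F-defn}. The step I expect to be the main obstacle is the very first one: a naive union over all $\binom{n}{b_1 n}$ candidate Blue sets $B$ would contribute a factor $\exp(\Theta(n))$ that is \emph{not} dominated when $b_1$ is close to $\tfrac12$ (there $1-\psi(b_1)\to 0$, so $pn\,b_2(1-\psi(b_1))$ is only barely larger than the entropy terms), so passing to the extremal quantity $T_S$ and quantifying over $S$ only is essential rather than cosmetic. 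The remaining difficulty is making the truncation/concentration estimate for the order-statistic sum $T_S$ precise enough — despite the mild dependence among $\{d_S(u)\}_{u\in S}$ — to land on the stated function $\psi$.
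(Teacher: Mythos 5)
Your proposal takes a genuinely different route from the paper, and its central motivation is factually wrong about what the paper does. The paper's proof \emph{does} carry out exactly the double union bound over the current Blue set $B$ (size $\approx b_1 n$) and the candidate set $S$ (size $\approx b_2 n$) that you argue must be avoided: for fixed $(B,S)$ it bounds $\Pr(\mathcal{E}(B,S))=\Pr(\dif(S)\le 0)$ by the exponential moment $\E{e^{-t\dif(S)}}$, which factors cleanly over the independent edge indicators $\mathbf{1}_{u\adj v}$ (the values $X(u,v)$ in Table~\ref{tab:difS-summands}), optimizes $t$ in closed form at $t_x=\tfrac13\log(1/x-1)$ — this is precisely where $\psi(y)=\tfrac12 y+\tfrac32 y^{1/3}(1-y)^{2/3}$ comes from — and then pays the entropy cost $\binom{n}{\lfloor b_1n\rfloor}\binom{n}{\lfloor b_2 n\rfloor+1}$, giving the $2h(\cdot)$ term in $F$. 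You are right that this cost is $e^{\Theta(n)}$ and is not dominated unconditionally as $b_1\to\tfrac12$; but the lemma is simply an inequality that becomes vacuous when $F\le 0$, and dominance is guaranteed later by the hypothesis $ba^2>3(\log 2)/2$ in Lemma~\ref{lem:step2}. So "quantifying over $S$ only" is not essential; it is just an alternative.

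As an alternative it is also not carried to completion, which is the more serious gap. The reduction to the extremal quantity $T_S=\max_{|C|\le b_1 n}\sum_{u\in C}|\Gamma(u)\cap S|$ is sound (it is equivalent to folding the union over $B$ into a single maximal event), but the claimed tail bound $\Pr\bigl(T_S\ge\tfrac12 D_S\bigr)\le\exp\bigl(-pn^2 b_2(1-\psi(b_1))\bigr)$ on this order-statistic sum of weakly dependent binomial codegrees is asserted, not proved; you would need to actually optimize the truncation level $\tau$, handle the dependence among $\{d_S(u)\}_{u\in S}$, and verify that the Cram\'er rate collapses to that specific $\psi$, none of which is done. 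Moreover your final bookkeeping does not add up: a union over $S$ alone buys one factor $e^{-nh(b_2)}$, whereas $F(pn,b_2,b_1)$ contains $2h(b_2)$, and "a second, cruder entropy loss of $\exp(-nh(b_2))$ absorbed inside the concentration argument" is not a derivation — it is assuming the amount of slack you need to land on the stated constant. In the paper the second $h$-factor has an explicit origin (the union over $B$, then consolidated using monotonicity of $h$ on $(0,\tfrac12)$ and $b_2\le b_1$), which your scheme deliberately gave up.
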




\begin{proof}
    Fix two subsets $B \in \binom{V}{\lfloor xn \rfloor}$ and $S \in \binom{V}{\lfloor yn \rfloor + 1}$.
    Since the Blue camp on the next day is fully determined from the current Blue camp and the graph $G$, we can let $\mathcal{E}(B, S)$ be the event that $S$ is contained in the Blue camp on the next day if $B$ is the Blue camp today.
    Note that this is an event on the space of all simple, undirected graphs over $n$ vertices, endowed with the $G(n, p)$ measure.
    Let $\dif(S) \defeq \sum_{v\in S} \dif(v)$, which is a random variable on the same space, and notice that $\mathcal{E}(B, S)$ implies $\dif(S)\le 0$.
	We apply a Chernoff argument: for any $t > 0$,
    \[
	\Pr(\mathcal{E}(B, S)) \le \Pr(\dif(S) \le 0) \le \Pr(e^{-t\dif(S)} \ge 1) \le \E{e^{-t\dif(S)}}.
	\]
	To proceed, we need to split $\dif(S)$ into independent random variables.
	\[
	\begin{aligned}
		\dif(S) & = \sum_{v\in V} \Bigl[\chi_S(v)\sum_{u\in V}\Clr(u)\mathbf{1}_{u\adj v}\Bigr]
		= \sum_{\{u, v\}\subset V} \Bigl[\Clr(u)\chi_S(v) + \Clr(v)\chi_S(u) \Bigr] \mathbf{1}_{u\adj v} \\
		& = \sum_{\{u, v\}\subset V} X(u, v)\mathbf{1}_{u\adj v},
		\quad \quad \text{ for } X(u, v) \defeq \Clr(u)\chi_S(v) + \Clr(v)\chi_S(u).
	\end{aligned}
	\]
	We have
	\begin{equation} \label{eq:difS-bound-1}
		\begin{aligned}
			& \log\E{e^{-t\dif(S)}} = \sum_{\{u, v\}\subset V} \log\E{e^{-tX(u, v)\mathbf{1}_{u\adj v}}} \\
			& = \sum_{\{u, v\}\subset V} \log\Bigl(1 - p + p\E{e^{-tX(u, v)}} \Bigr)
			\le p \sum_{\{u, v\}\subset V} \Bigl(\E{e^{-tX(u, v)}} - 1 \Bigr).
		\end{aligned}
	\end{equation}
	Table \ref{tab:difS-summands} sums up respective values for $X(u, v)$ when $u$ and $v$ vary among the four sets:
	$R\cap S$, $R\cap S^c$, $B\cap S$, $B\cap S^c$ (where $R = B^c$ is the set of Red nodes).
	\begin{table}[h]
		\[\begin{array}{c|c|c|c|c}
			u\setminus v & R\cap S               & B\cap S               & R\cap S^c & B\cap S^c \\ \hline
			R\cap S      & 2 & 0 & \multirow{2}{*}{$1$} & \multirow{2}{*}{$-1$} \\ \cline{1-3}
			B\cap S      & 0 & -2 &           &  \\ \hline
			R\cap S^c    & \multicolumn{2}{|c|}{1}               & \multicolumn{2}{|c}{\multirow{2}{*}{0}}  \\ \cline{1-3}
			B\cap S^c    & \multicolumn{2}{|c|}{-1}              & \multicolumn{2}{|c}{}
		\end{array}\]
		\caption{Values of $X(u, v)$}
		\label{tab:difS-summands}
	\end{table}
	Using this table, we get
	\begin{equation} \label{eq:difS-bound-2}
		\begin{aligned}
			\sum_{\{u, v\}\subset V} \Bigl(\E{e^{-tX(u, v)}} - 1 \Bigr)
			= & \binom{\card{S\cap R}}{2}\left(e^{-2t} - 1\right) + \binom{\card{S\cap B}}{2}\left(e^{2t} - 1\right)\\
			& + \card{S}\card{R\cap S^c}(e^{-t} - 1) + \card{S}\card{B\cap S^c}(e^t - 1)
		\end{aligned}
	\end{equation}
	We aim to find a simple upper bound for this expression with an appropriate $t$.
	It can be viewed as $(1 - e^{-t})f(\card{S\cap R})$, where
	\begin{equation*}
		f(x) =  \binom{\card{S} - x}{2}e^t\left(e^{t} + 1\right)
		-\binom{x}{2}\left(e^{-t} + 1\right)
		- \card{S}(\card{R} - x) + \card{S}(\card{B} - \card{S} + x)e^t,
	\end{equation*}
	is a quadratic polynomial of $x$, with quadratic coefficient
	$\frac{1}{2}(e^t - e^{-t})(e^t + 1) > 0$.
	Thus $f$ is convex and $f(x) \le \max\{f(0), f(\card{S})\}$ for $x\in [0, \card{S}]$.
	Let
	\[
	T = (1/2)\card{S}n\left(x\left(e^{2t} + e^t + 2\right) - 2\right),
	\]
	we prove $\max\{f(0), f(\card{S})\} \le T$ for an appropriately chosen $t$.
	Replacing $\card{B}$ with $\lfloor xn \rfloor$ and $\card{S}$ with $\lfloor yn \rfloor + 1$, we have after a few computations,
	\[
	T - f(0) = \card{S}\Bigr(e^t + (xn - \lfloor xn \rfloor) + (xn - \lfloor yn \rfloor)e^t(e^t - 1)/2 \Bigr) > 0
	\ \text{ for all } t > 0,
	\]
	On the other hand,
	\[
	\begin{aligned}
		T - f(\card{S})
		& = \card{S}\Bigl((xne^t - \lfloor yn \rfloor e^{-t})(e^t - 1)/2 + (xn - \lfloor xn \rfloor)(e^t + 1) - 1 \Bigr) \\
		& \ge \frac{1}{2}\card{S}(xn(e^t - 1)^2(1 + e^{-t}) - 2) \ge \frac{1}{2}\card{S}(xn(e^t - 1)^2 - 2),
	\end{aligned}
	\]
	which is not always positive. Let us call a $t$ ``nice'' if $xn(e^t - 1)^2 \ge 2$.
	Assuming we have a nice $t$, then by Eqs. \eqref{eq:difS-bound-1} and \eqref{eq:difS-bound-2},
	\[
	\log\E{e^{-t\dif(S)}} \le pn\card{S}g_x(t), \ \text{ where } g_x(t) \defeq (1 - e^{-t})\left(x(e^{2t} + e^t + 2)/2 - 1\right)
	\]
	If we were to choose $t$ freely, the best choice would be $t_x = \log(1/x - 1)/3$, which can be shown easily to minimize $g_x(t)$.
    Fortunately, we can show that this $t_x$ is nice.
    Using the fact
    \begin{equation*}
        (y - 1)(y^2 + y + 1) = y^3 - 1 \quad \text{ for all } y,
    \end{equation*}
    specifically at $y = e^{t_x}$, we have
    \begin{equation*}
    \begin{aligned}
        xn(e^{t_x} - 1)^2 \ge 2
        & \iff xn(e^{3t_x} - 1)^2 \ge 2(e^{2t_x} + e^{t_x} + 1)^2
        \\
        & \iff n(1 - 2x)^2 \ge 2x(e^{2t_x} + e^{t_x} + 1)^2,
    \end{aligned}
    \end{equation*}
    where the last inequality is obtained by plugging the definition of $t_x$ into the left-hand side.
    Now consider two cases:
	If $x < 1/9$, we have $e^{2t_x} + e^{t_x} + 1\le 3e^{2t_x} < 3x^{-2/3}$,
	and $1 - 2x > 7/9$, so it suffices to show that
	$n(7/9)^2 \ge 18x^{-1/3}$, or $xn^3 \ge (1458/49)^3$, which holds when $x \ge 1/n$ and $n$ is large enough.
	
	If $x \ge 1/9$, $e^{t_x} \le 2$, and $2x \le 1$ so it suffices to show that
	$n(1 - 2x)^2 \ge 7$.
	By hypothesis, $x\le 1/2 - 2n^{-1/2}$ so $1 - 2x \ge 4n^{-1/2}$, thus $n(1 - 2x)^2 \ge 16 > 7$.
	
	Therefore we can use $t_x = \log(1/x - 1)/3$ and obtain the bound
	\begin{equation}  \label{eq:difS-bound-3}
		\begin{aligned}
			\Pr(\mathcal{E}(B, S))
			& \le \E{e^{-t\dif(S)}} \le e^{-pn\card{S}g(t_x)} \\
			& \le \exp\left[-pn^2y\left(1 - \frac{1}{2}x - \frac{3}{2}x^{1/3}(1 - x)^{2/3}\right)\right].
		\end{aligned}
	\end{equation}
	Let $\mathcal{G}$ be the event that for any choice of at most $xn$ Blue nodes, there are at most $yn$ Blue nodes the next day.
	$\mathcal{G}^c$ is the event that there are $B\in \binom{V}{\lfloor xn \rfloor}$ and $S\in \binom{V}{\lfloor yn \rfloor + 1}$ satisfying $\mathcal{E}(B, S)$.
	By a double union bound, $\Pr(\mathcal{G}^c) \le \binom{n}{\lfloor xn \rfloor}\binom{n}{\lfloor yn \rfloor + 1}\Pr(\mathcal{E}(B, S))$.
	A routine Stirling bound shows that with $h(x) = x\log x + (1 - x)\log(1 - x)$, $\binom{n}{k} \le e^{-nh(k/n)}$.
	Therefore we have
	\begin{equation}  \label{eq:difS-bound-4}
		\begin{aligned}
			& \binom{n}{\lfloor xn \rfloor}\binom{n}{\lfloor yn \rfloor + 1}
			\le \exp\left[-n\left(h\Bigl(\frac{\lfloor xn \rfloor}{n}\Bigr) + h\Bigl(\frac{\lfloor yn \rfloor + 1}{n}\Bigr)\right)\right] \\
			& \le \exp\left[-n\left(h(x) + h\Bigl(y + \frac{1}{n}\Bigr)\right)\right].
			\quad (h \text{ is decreasing on } \left(0, \frac{1}{2}\right))
		\end{aligned}
	\end{equation}
	Since $h$ is convex, $h(y + \eps) \ge h(y) + \eps h'(y)$, so
	\begin{equation}  \label{eq:difS-bound-5}
		\begin{aligned}
			\exp\left[-n\left(h(x) + h\Bigl(y + \frac{1}{n}\Bigr)\right)\right]
			& \le \exp\left[-n\left(h(x) + h(y) + \frac{1}{n}\log\left(\frac{y}{1 - y}\right)\right)\right] \\
			& \le \frac{1 - y}{y}e^{-n(h(x) + h(y))} \le \frac{1}{y}e^{-2nh(x)}.
		\end{aligned}
	\end{equation}
	Combining Eqs. \eqref{eq:difS-bound-3}, \eqref{eq:difS-bound-4} and \eqref{eq:difS-bound-5}, we have
	\[
	\Pr(\mathcal{G}^c) \le \frac{1}{y}\exp\left[-pn^2y\left(1 - \frac{1}{2}x - \frac{3}{2}x^{1/3}(1 - x)^{2/3}\right) - 2nh(x)\right].
	\]
	The right-hand side is precisely $(1/y)\exp\bigl[-nF(pn, y, x)\bigr]$, as desired.
\end{proof}

Assuming Lemma \ref{lem:hypo-blue-bound}, the following holds.

\begin{lemma} \label{lem:step2}
	For any $a > 0$ and $b\in (0, 1/2)$ satisfying:
	\begin{equation} \label{eq:step2-abc-cond}
		ba^2 > 3(\log 2)/2,
	\end{equation}
	if $\card{B_1} \le \frac{n}{2} - \frac{an}{\sqrt{pn}}$,
	then $\card{B_2} \le bn$ with probability at least $1 - e^{-\Omega(n)}$.
\end{lemma}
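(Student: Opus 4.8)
The plan is to apply Lemma~\ref{lem:hypo-blue-bound} with a well-chosen pair $(b_1, b_2)$ and then verify that the failure probability $b_2^{-1}\exp[-nF(pn, b_2, b_1)]$ is $e^{-\Omega(n)}$. Since we are told that $\card{B_1} \le \frac{n}{2} - \frac{an}{\sqrt{pn}} = \left(\frac12 - \frac{a}{\sqrt{pn}}\right)n$, the natural choice is $b_1 \defeq \frac12 - \frac{a}{\sqrt{pn}}$ (we may assume $pn$ is large enough that $b_1 \le 1/2 - 7/n$, since $a$ is fixed and $\sqrt{pn}/a = o(n)$; if $pn$ is bounded this needs a separate trivial argument, or one absorbs it into the $\Omega$), and $b_2 \defeq b$. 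Note $b_2 = b < 1/2$ and we need $b_2 \le b_1$, which again holds once $pn$ is large. Then Lemma~\ref{lem:hypo-blue-bound} tells us that, except with probability $b^{-1}\exp[-nF(pn, b, b_1)]$, every coloring with at most $b_1 n$ Blue nodes yields at most $bn$ nodes that can turn Blue the next day; applying this to $\Clr_1$ (which has $\card{B_1} \le b_1 n$) gives $\card{B_2} \le bn$.

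The remaining task is therefore to show $nF(pn, b, b_1) = \Omega(n)$, i.e. that $F(pn, b, b_1)$ is bounded below by a positive constant. Unpacking the definition, $F(pn, b, b_1) = pn \cdot b\left(1 - \frac12 b_1 - \frac32 b_1^{1/3}(1 - b_1)^{2/3}\right) + 2h(b)$. The first step is to understand the factor $g(b_1) \defeq 1 - \frac12 b_1 - \frac32 b_1^{1/3}(1 - b_1)^{2/3}$ as $b_1 \to 1/2$. At $b_1 = 1/2$ one checks $g(1/2) = 1 - \frac14 - \frac32 \cdot \frac12 = 0$, so $g$ vanishes at the endpoint and we need the first-order behaviour: writing $b_1 = \frac12 - \epsilon$ with $\epsilon = a/\sqrt{pn}$, a Taylor expansion gives $g(b_1) = c_0 \epsilon + O(\epsilon^2)$ for some absolute constant $c_0 > 0$ (one computes $g'(1/2)$; the $\frac32 b_1^{1/3}(1-b_1)^{2/3}$ term has a nonzero derivative there since $\frac{d}{db_1}[b_1^{1/3}(1-b_1)^{2/3}]$ at $1/2$ equals $\frac13(1/2)^{-2/3}(1/2)^{2/3} - \frac23(1/2)^{1/3}(1/2)^{-1/3} = \frac13 - \frac23 = -\frac13$, so $g'(1/2) = -\frac12 - \frac32\cdot(-\frac13) = -\frac12 + \frac12 = 0$ — so actually the first derivative vanishes too and one must go to second order: $g(b_1) = \frac{c_0}{2}\epsilon^2 + O(\epsilon^3)$ with $c_0 = -g''(1/2) > 0$). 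Hence $pn \cdot b \cdot g(b_1) \approx pn \cdot b \cdot \frac{c_0}{2}\cdot\frac{a^2}{pn} = \frac{c_0}{2} b a^2$, a positive constant independent of $n$ and $p$. Meanwhile $2h(b)$ is a fixed negative constant (since $h(b) < 0$ for $b \in (0,1)$). So $F(pn, b, b_1) \to \frac{c_0}{2} b a^2 + 2h(b)$, and the content of hypothesis~\eqref{eq:step2-abc-cond}, $ba^2 > \frac{3\log 2}{2}$, should be exactly what makes this limit positive — i.e. one needs $\frac{c_0}{2} b a^2 + 2h(b) > 0$, and since $h(b) \ge -\log 2$ for all $b \in (0,1)$ (the minimum of $h$ is $-\log 2$ at $b = 1/2$), it suffices that $\frac{c_0}{2}ba^2 > 2\log 2$, which matches \eqref{eq:step2-abc-cond} provided the constant $c_0$ works out to $c_0 = 8/3$; this is the arithmetic I would check carefully against the exact form of $h$ and $g''(1/2)$.

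I would organize the write-up as: (i) reduce to showing $F(pn, b, b_1) \ge \kappa > 0$ for a constant $\kappa$ and all large $pn$; (ii) compute the Taylor expansion of $g(b_1)$ at $b_1 = 1/2$ to extract the quadratic coefficient; (iii) combine with $h(b) \ge -\log 2$ and hypothesis~\eqref{eq:step2-abc-cond} to conclude $F(pn, b, b_1) > 0$; (iv) handle the degenerate small-$pn$ regime separately (there $\frac{n}{\sqrt{pn}} \ge \sqrt{n}$ so $\card{B_1} \le n/2 - a\sqrt{n}$ and one can apply the lemma with, say, $b_1 = 1/2 - 7/n$ after noting $b \le 1/2 - 7/n$, or simply observe that the claimed event is implied more crudely). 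The main obstacle is step (ii)–(iii): the function $g$ has a double zero at $b_1 = 1/2$, so a naive first-order bound gives nothing, and one has to be careful enough with the second-order expansion (including controlling the $O(\epsilon^3)$ error uniformly, which is fine since $b_1$ stays in a fixed closed subinterval of $(0,1)$ once $pn$ is large) to see that the surviving constant is precisely $\frac{c_0}{2}ba^2$ and that \eqref{eq:step2-abc-cond} is the sharp threshold making $F$ positive. Everything else is bookkeeping.
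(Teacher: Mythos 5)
Your proposal is correct and takes essentially the same route as the paper: set $b_1 = \tfrac12 - a/\sqrt{pn}$, $b_2 = b$, invoke Lemma~\ref{lem:hypo-blue-bound}, and show $F(pn,b,b_1)$ stays bounded below by a positive constant by Taylor-expanding $g(b_1) \defeq 1 - \tfrac12 b_1 - \tfrac32 b_1^{1/3}(1-b_1)^{2/3}$ at $b_1 = 1/2$, where it indeed has a double zero with $g''(1/2) = 8/3$ (so your anticipated $c_0 = 8/3$ is right), combined with $h(b) \ge -\log 2$. The paper packages the quadratic behavior as the global inequality $g(b_1) \ge \tfrac13(1-2b_1)^2$ rather than an asymptotic expansion with an $O(\eps^3)$ error to control, but the content is identical.
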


\begin{proof}
	Let $b_1 = \frac{1}{2} - \frac{a}{\sqrt{pn}}$.
	Conditioned on $\card{B_1} \le b_1n$, Lemma \ref{lem:hypo-blue-bound} implies
	\[
	\Pr\left(\card{B_2} \le bn\right) \ge 1 - b^{-1}\exp\bigl[-nF(pn, b, b_1)\bigr].
	\]
	We want to show $F(pn, b, b_1) = \Omega(1)$, where by Eq. \ref{eq:F-defn}, we have
	\[
	F(pn, b, b_1) = pnb\left(1 - \frac{1}{2}b_1 - \frac{3}{2}b_1^{1/3}(1 - b_1)^{2/3}\right) + 2h(b_1).
	\]
	A routine Taylor expansion around $1/2$ shows that
	\begin{equation} \label{eq:step2-temp1}
		1 - \frac{1}{2}b_1 - \frac{3}{2}b_1^{1/3}(1 - b_1)^{2/3} \ge \frac{1}{3}(1 - 2b_1)^2 = \frac{4a^2}{3pn},
		\ \text{ and } \
        h(b_1) \ge -\log 2.
	\end{equation}
	Thus
	$F(pn, b, b_1) \ge \frac{4}{3}a^2b - 2\log 2 = \Omega(1)$ due to \eqref{eq:step2-abc-cond}.
	The proof is complete.
\end{proof}

\subsection{Step 3 - Third Day onwards} \label{sec:day3+}

Note that although Lemma \ref{lem:hypo-blue-bound} can be used to prove Blue repeatedly shrinks until it vanishes,
some calculations show that the minimum $p$ for which this argument works is $2n^{-1}\log n$.
We use a different argument in this section specifically to include the case $p = (1 + \lambda)n^{-1}\log n$, which is also the smallest possible threshold.

\begin{lemma} \label{lem:hypo-blue-bound-2}
	Let $A_G$ denote the adjacency matrix of $G$.
	For any choice of $B_t$ and any $1/2 > b \ge \card{B_t}/n$, we have either $B_{t+1} = \varnothing$ or
	\[
	\card{B_{t+1}} \Bigl(\frac{1}{\card{B_{t+1}}}\sum_{v\in B_{t+1}}d(v) \Bigr)^2 \le \inf_x\|A_G - x\mathbf{1}_n\mathbf{1}_n^T\|_{op}^2 \frac{1 - b}{(1/2 - b)^2}\card{B_t},
	\]
    where $\|\cdot\|_{op}$ denotes the matrix operator norm.
\end{lemma}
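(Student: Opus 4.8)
The plan is to bound the sum $\sum_{v \in B_{t+1}} d(v)$ from below and from above, where $d(v)$ is the degree of $v$ in $G$, and compare. First I would observe that every vertex $v$ that is Blue on day $t+1$ must have had at least half of its neighbors Blue on day $t$ (a majority, or a tie broken toward Blue if $v$ was already Blue — in either case at least $d(v)/2$ neighbors lie in $B_t$). Hence the number of edges between $B_{t+1}$ and $B_t$ is at least $\tfrac12 \sum_{v \in B_{t+1}} d(v)$. On the other hand, this same edge count is $\mathbf{1}_{B_{t+1}}^T A_G \mathbf{1}_{B_t}$, so the key inequality to establish is
\[
\tfrac12 \sum_{v \in B_{t+1}} d(v) \le \mathbf{1}_{B_{t+1}}^T A_G \mathbf{1}_{B_t}.
\]

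Next I would introduce the shift by $x \mathbf{1}_n \mathbf{1}_n^T$ to exploit spectral concentration: write $A_G = (A_G - x\mathbf{1}_n\mathbf{1}_n^T) + x \mathbf{1}_n \mathbf{1}_n^T$, so that
\[
\mathbf{1}_{B_{t+1}}^T A_G \mathbf{1}_{B_t} = \mathbf{1}_{B_{t+1}}^T (A_G - x\mathbf{1}_n\mathbf{1}_n^T) \mathbf{1}_{B_t} + x\,\card{B_{t+1}}\card{B_t}.
\]
The first term is at most $\|A_G - x\mathbf{1}_n\mathbf{1}_n^T\|_{op} \sqrt{\card{B_{t+1}}\,\card{B_t}}$ by Cauchy–Schwarz and the operator-norm bound, and since this holds for every $x$ we may take the infimum. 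The issue is the extra term $x\,\card{B_{t+1}}\card{B_t}$, which is helpful only if $x \le 0$; but taking $x$ very negative blows up the operator norm. The resolution I expect is to handle it differently: instead, bound $\sum_{v\in B_{t+1}} d(v)$ from below by using that $B_{t+1}$ "sees" mostly $B_t$, i.e. estimate $\mathbf{1}_{B_{t+1}}^T(A_G - x\mathbf{1}_n\mathbf{1}_n^T)\mathbf{1}_{B_{t+1}}$ and $\mathbf{1}_{B_{t+1}}^T(A_G-x\mathbf{1}_n\mathbf{1}_n^T)\mathbf{1}_{B_t}$ separately. Writing $\beta = \card{B_{t+1}}/n$, $\gamma=\card{B_t}/n \le b$, and $s := \sum_{v\in B_{t+1}}d(v)$, the majority condition gives the number of $B_{t+1}$–$B_t$ edges $\ge s/2$, while the number of edges \emph{inside} $B_{t+1}$ (counted toward the degree sum but not toward $B_t$ unless $B_{t+1}\subseteq B_t$, which need not hold) is at most... — so I would instead argue: number of edges from $B_{t+1}$ into $V\setminus B_t$ is at most $s/2$, hence at least $s/2$ edges go from $B_{t+1}$ to $B_t$. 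Applying the spectral bound to the cross term and solving the resulting inequality $s/2 \le \|A_G - x\mathbf 1_n\mathbf 1_n^T\|_{op}\sqrt{\beta\gamma}\,n + x\beta\gamma n^2$ for the useful choice of $x$ (namely $x$ of the order of the average density, where the operator norm is small), then using $\gamma \le b < 1/2$ to convert $x\beta\gamma n^2$ into a term controlled by $\tfrac12(1-2b)\cdot(\text{stuff})$, should yield $s \le 2\|A_G-x\mathbf 1_n\mathbf 1_n^T\|_{op}\sqrt{\beta\gamma}\,n \cdot \frac{1}{1-2\gamma/(1-\gamma)}$ type bounds; squaring and rearranging $s^2/\card{B_{t+1}} = s^2/(\beta n)$ gives exactly $\card{B_{t+1}}\big(s/\card{B_{t+1}}\big)^2 \le \inf_x\|A_G-x\mathbf 1_n\mathbf 1_n^T\|_{op}^2\,\frac{1-b}{(1/2-b)^2}\card{B_t}$ after collecting the factor $\frac{1-b}{(1/2-b)^2}$ from $\frac{\gamma(1-\gamma)/(1-2\gamma)^2 \le b(1-b)/(1-2b)^2}{}$, using monotonicity of $t\mapsto t(1-t)/(1-2t)^2$ on $(0,1/2)$.

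The main obstacle I anticipate is precisely the algebra around the shift parameter $x$: one must route the argument so that the $x\mathbf 1_n\mathbf 1_n^T$ correction, applied to the \emph{cross} term $\mathbf 1_{B_{t+1}}^T A_G\mathbf 1_{B_t}$, produces a term $x\card{B_{t+1}}\card{B_t}$ that can be absorbed using $\card{B_t}\le bn < n/2$ rather than one that sabotages the bound, and to verify that the quadratic inequality in $s$ indeed rearranges to give the stated factor $\frac{1-b}{(1/2-b)^2}$ exactly. The case split $B_{t+1}=\varnothing$ is trivial and is only there to avoid dividing by zero. Everything else — the majority-edge-counting, Cauchy–Schwarz, and the monotonicity of $t(1-t)/(1-2t)^2$ — is routine.
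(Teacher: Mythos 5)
Your plan has the right raw ingredients --- the majority condition at each $v \in B_{t+1}$, the spectral bound on $A_G - x\mathbf{1}_n\mathbf{1}_n^T$, and an application of Cauchy--Schwarz --- but it runs into precisely the obstacle you flag at the end, and the paper gets around it with a trick you did not find.

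You apply the spectral bound to the pair of indicator vectors $\mathbf{1}_{B_{t+1}}$ and $\mathbf{1}_{B_t}$, neither of which is orthogonal to $\mathbf{1}_n$. This is what generates the stray term $x\,\card{B_{t+1}}\card{B_t}$, and there is no clean way to remove it: the $x$ that makes $\|A_G - x\mathbf{1}_n\mathbf{1}_n^T\|_{op}$ small is close to the edge density $p > 0$, so the stray term is positive and of the same order as $\tfrac{1}{2}\sum_{v\in B_{t+1}}d(v)$ itself. To absorb it you must introduce a second spectral inequality relating $x$ to the degree sum (e.g.\ expanding $\sum_{v\in B_{t+1}}d(v) = \mathbf{1}_{B_{t+1}}^T A_G \mathbf{1}_n$ the same way), and once you carry this out carefully the resulting constant comes out as a multiple of $(\sqrt{b}+b)^2/(1/2-b)^2$ rather than $b(1-b)/(1/2-b)^2$; it is strictly worse, and the lemma as stated does not follow. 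Your proposal never pins down this algebra, and the reason it resists being pinned down is that the route is genuinely lossier.

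The paper's proof sidesteps the issue entirely by testing $A_G$ against a \emph{single centered} vector rather than a pair of indicators. Set $b_t = \card{B_t}/n$ and $\mathbf{v} = \chi_{B_t} - b_t\mathbf{1}_n$. Then $\mathbf{1}_n^T\mathbf{v} = 0$, so $(A_G - x\mathbf{1}_n\mathbf{1}_n^T)\mathbf{v} = A_G\mathbf{v}$ identically for every $x$; the rank-one correction vanishes before any estimate is made, and the infimum over $x$ is free. One then computes $\|\mathbf{v}\|_2^2 = b_t(1-b_t)n = (1-b_t)\card{B_t}$, observes $\|A_G\mathbf{v}\|_2^2 = \sum_v (d_{B_t}(v) - b_t d(v))^2$, restricts the sum to $v\in B_{t+1}$ where the majority condition $d_{B_t}(v) \ge d(v)/2 \ge b_t d(v)$ gives the term-wise lower bound $(1/2-b_t)^2 d(v)^2$, and finishes with Cauchy--Schwarz $\sum_{v\in B_{t+1}}d(v)^2 \ge \card{B_{t+1}}^{-1}\bigl(\sum_{v\in B_{t+1}}d(v)\bigr)^2$. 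Monotonicity of $b\mapsto (1-b)/(1/2-b)^2$ then relaxes $b_t$ to $b$. This gives the constant $\frac{1-b}{(1/2-b)^2}$ exactly and avoids all of the $x$-bookkeeping you anticipated being the crux. The key idea you were missing is that the vector you feed into the spectral bound should be made orthogonal to $\mathbf{1}_n$ from the start, so the shift is absorbed for free.
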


This lemma works best when $\card{B_t} \le bn$ for a constant $b < 1/2$, hence the need for the argument in Step 2.
The full proof is presented in Appendix \ref{sec:shrink-scheme}.

\begin{lemma} \label{lem:step3}
	Consider the majority dynamics process on $G\sim G(n, p)$ where $pn \ge (1 + \lambda)\log n$.
	Assume that $\card{B_2} \le bn$ for some constant $b < 1/2$.
	Then with probability at least $1 - 2n^{-\lambda/2}$, there is a time $t^* = O(\log_{pn}n)$ such that $\card{B_{t^*}} = \varnothing$.
\end{lemma}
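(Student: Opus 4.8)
The plan is to iterate Lemma \ref{lem:hypo-blue-bound-2}, using a spectral bound on the centered adjacency matrix to control how fast $\card{B_t}$ contracts. First I would record the two facts about $G\sim G(n,p)$ with $pn\ge(1+\lambda)\log n$ that make the iteration work: (i) with probability at least $1-n^{-\lambda/2}$ (say), the minimum degree of $G$ is at least $pn/2$ — indeed, for a fixed vertex the degree is $\Bin(n-1,p)$ with mean $\approx pn$, so a Chernoff bound gives $\Pr(d(v)<pn/2)\le e^{-pn/8}\le n^{-(1+\lambda)/8}$, and one should check the union-bounded exponent is of the claimed form (one may need a slightly larger threshold than $pn/2$, or to absorb a constant, but the point is $\min_v d(v)=\Omega(pn)$ whp with the stated failure probability, possibly after adjusting constants); (ii) $\inf_x\|A_G-x\one_n\one_n^T\|_{op}=O(\sqrt{pn})$ with probability at least $1-n^{-\lambda/2}$ — this is the standard concentration of the second eigenvalue of $G(n,p)$ above the connectivity threshold (the natural choice $x=p$ gives $\|A_G-p\one_n\one_n^T\|_{op}=O(\sqrt{pn})$ by, e.g., the Feige–Ofek or Vu-type bounds; the infimum is only smaller). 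Both events together fail with probability at most $2n^{-\lambda/2}$, matching the statement. Condition on both for the rest of the argument.

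On this good event, plug into Lemma \ref{lem:hypo-blue-bound-2}: as long as $B_{t+1}\ne\varnothing$ and $\card{B_t}\le bn$ with $b<1/2$ a fixed constant, every $v\in B_{t+1}$ has $d(v)\ge \min_v d(v)=\Omega(pn)$, so the left side is at least $\card{B_{t+1}}\cdot\Omega(pn)^2$, while the right side is $O(pn)\cdot\frac{1-b}{(1/2-b)^2}\card{B_t}=O_b(pn)\card{B_t}$. Rearranging, $\card{B_{t+1}}\le \frac{C_b}{pn}\card{B_t}$ for an explicit constant $C_b$ depending only on $b$. The key point is that $\frac{C_b}{pn}=o(1)$ and in fact $\le 1/2$ once $pn$ is large enough; if $pn$ is bounded by a constant (it is always $\ge(1+\lambda)\log n\to\infty$, so this is not an issue for large $n$) one still has geometric decay with ratio $C_b/pn$. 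So starting from $\card{B_2}\le bn$, we get $\card{B_{t}}\le bn\,(C_b/pn)^{t-2}$ for every $t\ge 2$ with $B_t\ne\varnothing$, which forces $\card{B_t}<1$ — hence $B_t=\varnothing$ — once $(C_b/pn)^{t-2}<1/(bn)$, i.e. after $t-2=O\!\bigl(\frac{\log n}{\log(pn)-\log C_b}\bigr)=O(\log_{pn}n)$ steps. Setting $t^*$ to be this value completes the proof.

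One technical wrinkle to handle carefully: Lemma \ref{lem:hypo-blue-bound-2} requires $\card{B_t}/n\le b<1/2$ at the step it is applied, so I should verify the hypothesis is preserved along the iteration — which is immediate since $\card{B_t}$ is (eventually, and certainly once $C_b/pn\le 1$) non-increasing, so $\card{B_t}\le\card{B_2}\le bn$ for all $t\ge 2$. A second point: the bound from Lemma \ref{lem:hypo-blue-bound-2} involves the \emph{average} degree over $B_{t+1}$, not the minimum; but average $\ge$ minimum, so replacing the average by $\min_v d(v)$ only weakens the left side in the direction we want, which is fine. I expect the main obstacle to be the bookkeeping on constants and failure probabilities — specifically, getting the degree lower bound and the spectral bound to simultaneously hold with total failure probability exactly $2n^{-\lambda/2}$ as claimed, rather than some larger polynomial loss — which may require choosing the degree threshold as $pn(1-o(1))$ rather than $pn/2$ and tracking the $(1+\lambda)$ factor through the Chernoff exponent. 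Everything else is a routine geometric iteration.
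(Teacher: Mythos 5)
Your proposal matches the paper's proof essentially step for step: condition on a minimum-degree lower bound and on a spectral bound $\|A_G - p\mathbf{1}_n\mathbf{1}_n^T\|_{op} = O(\sqrt{pn})$ (both whp above the connectivity threshold), feed these into Lemma \ref{lem:hypo-blue-bound-2} to obtain the contraction $\card{B_{t+1}} \le O(1/(pn))\card{B_t}$, then iterate for $O(\log_{pn}n)$ steps. The constant-tracking caveat you flag is indeed real and slightly sharper than a universal constant like $1/2$: for $pn$ just above $(1+\lambda)\log n$ with $\lambda$ small, the degree threshold must be $\alpha_\lambda pn$ with $\alpha_\lambda$ \emph{exponentially} small in $1/\lambda$ (the paper's Lemma \ref{lem:degree-lower-bound} takes $\alpha_\lambda = \exp(-(8/\lambda+7))$), which is what makes the union-bounded failure probability land at $n^{-\lambda/2}$.
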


\begin{proof}
	Lemma \ref{lem:matrix-norm-bound} implies that with probability at least $1 - n^{-\lambda}$,
	\begin{equation}  \label{eq:step3-temp1}
	\|A_G - p\mathbf{1}_n\mathbf{1}_n^T\|_{op}
	\le \|A_G - \E{A_G}\|_{op} + \|pI_n\|_{op} < (C + 1)\sqrt{pn},
	\end{equation}
    for the constant $C = C_{\lambda + 1, \lambda}$ from Lemma \ref{lem:matrix-norm-bound}.
	Lemma \ref{lem:degree-lower-bound} implies that for $\alpha = \alpha_\lambda$ in Lemma \ref{lem:degree-lower-bound},  with probabiliy at least $1 - n^{-\lambda/2}$,
	\begin{equation}  \label{eq:step3-temp2}
	\min{\{d(v): v\in V\}} \ge \alpha pn.
	\end{equation}
	Thus with probability $1 - 2n^{-\lambda/2}$, we can assume $G$ satisfies Eqs. \eqref{eq:step3-temp1} and \eqref{eq:step3-temp2}.
	The latter also implies the average degree in $B_{t+1}$ is at least $\alpha pn$.
	Assuming $\card{B_t}\le bn$, Lemma \ref{lem:hypo-blue-bound-2} implies that
	\[
	\card{B_{t+1}} (\alpha pn)^2 \le (C + 1)^2 pn \cdot \frac{1 - b}{(1/2 - b)^2} \card{B_t},
	\]
	or, equivalently,
	\[
	\card{B_{t+1}} \le \frac{A}{pn} \card{B_t}
	\quad \text{ where } A = \frac{(C + 1)^2(1 - b)}{\alpha^2(1/2 - b)^2}.
	\]
	By choosing $n$ large enough so that $pn \ge \log n > A$, we also have $\card{B_{t+1}} < bn$.
	This ensures the inequality above holds for $t+1$ and so on.
	After $\lceil \log_{pn/A}{bn} \rceil = O(\log_{pn}n)$ days, Blue has less than $1$ vertex and thus is empty.
\end{proof}

\subsection{Wrapping up the proof of Theorem \ref{thm:main-nonlazy}}

Consider the majority dynamics process on $G \sim G(n, p)$, where $1 - \frac{11}{n} \ge p \ge (1 + \lambda)n^{-1}\log n$ and $\card{R_0} = n/2 + \adv$ where $p\adv \ge 10$.
For $n$ sufficiently large, $\min\{\sqrt{p(1 - p)n}, p\adv\} \ge 10$.
Applying Lemma \ref{lem:day1} with $c = 10$ and $D = .21 < D_c \approx .41$, we have
\begin{equation} \label{eq:main-nonlazy-proof-step1}
	\card{B_1} \le \frac{n}{2} - D\frac{\min\{\sqrt{pn}, p\adv\}}{\sqrt{pn}}n,
\end{equation}
except with a fail probability at most $\frac{7}{12(D_c - D)^2}\min\{n, p\adv^2\}^{-1} < 14\min\{n, p\adv^2\}^{-1}$.
Conditioning on Eq. \ref{eq:main-nonlazy-proof-step1}, we apply Lemma \ref{lem:step2} with $a = D\min\{\sqrt{pn}, p\adv\} \ge 2.2$ and $b = .25$, which satisfy Eq. \ref{eq:step2-abc-cond}, to get
\begin{equation} \label{eq:main-nonlazy-proof-step2}
	\card{B_2} \le .25n,
\end{equation}
except with a fail probability at most $e^{-\Omega(n)}$.
Conditioning on Eq. \ref{eq:main-nonlazy-proof-step2}, we apply Lemma \ref{lem:step3} with $b = .25$ to finish the proof.
The total fail probability is at most
\[
\frac{14}{\min\{n, p\adv^2\}} + e^{-\Omega(n)} + 2n^{-\lambda/2}
= O\left(\frac{1}{\min\{n, p\adv^2\}} + n^{-\lambda/2}\right),
\]
which is the desired bound. \qed

\section{Some Results with Sub-connectivity Density} \label{sec:p<log(n)/n}

In this section, we state and prove some results about Majority Dynamics on a $G(n, p)$ random graph for $p < n^{-1}\log n$, using techniques developed in the case $p \ge (1 + \lambda)n^{-1}\log n$.
Note that when $p < (1 - \lambda)n^{-1}\log n$ for any constant $\lambda > 0$, the number of isolated vertices in $B_0$ has expectation
\begin{equation*}
	|B_0|(1 - p)^{n - 1} = (1 \pm O(p^2))|B_0|e^{-pn} = \Theta(|B_0|n^{\lambda-1}),
\end{equation*}
and variance
\begin{equation*}
	2\binom{|B_0|}{2}p(1 - p)^{2n - 3} + |B_0|(1 - p)^{n - 1} - |B_0|(1 - p)^{2n - 2} = \Theta(|B_0|n^{\lambda-1}).
\end{equation*}
Both quantities are $\Theta(n^\lambda)$ for any setting with $|B_0| = \Omega(n)$, so there are isolated vertices in $B_0$ with probability $1 - o_n(1)$ by a simple Chebyshev bound.
Clearly the same applies for $R_0$, so unanimity is unreachable with probability $1 - o_n(1)$ when the density $p$ is below the connectivity threshold.
However, with a fixed initial gap $\adv$ satisfying $p\adv \ge 10$, we show that the number of Blue vertices at the end is at most a small fraction of $n$, which decreases as $p$ increases, with high probability.
\begin{theorem}  \label{thm:subcon-main}
	Given any constant $0 < \lambda < 1$, consider Majority Dynamics on a $G(n, p)$ random graph with $p = (1 - \lambda)n^{-1}\log n$ and a fixed initial gap $\adv$ such that $p\adv \ge 10$.
	With probability at least
	\begin{equation*}
		1 - O\left(\frac{1}{\min\{n, p\adv^2\}} + n^{-(1 - \lambda)}\right),
	\end{equation*}
	there is some $t^*$ such that $B_t = B_{t^*}$ or $B_t = B_{t^* + 1}$ for all $t\ge t^*$ and
	\begin{equation*}
		\card{B_{t^* + 1}} \le \card{B_{t^*}} \le (\log n)^{O_\lambda(\log^{1/2} n)}n^\lambda,
	\end{equation*}
	and the average degree of vertices in $B_{t^*}$ is $O_\lambda(\sqrt{pn})$.
\end{theorem}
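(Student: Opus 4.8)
The plan is to keep Steps~1 and~2 of the proof of Theorem~\ref{thm:main-nonlazy} unchanged and to replace Step~3. Indeed, Lemmas~\ref{lem:day1} and~\ref{lem:step2} use only $p\adv\ge 10$ and $\sqrt{p(1-p)n}\to\infty$, both of which still hold, so exactly as in the wrap-up of Theorem~\ref{thm:main-nonlazy} one obtains $\card{B_2}\le\tfrac14 n$ with probability $1-O\bigl(\min\{n,p\adv^2\}^{-1}\bigr)-e^{-\Omega(n)}$. What fails afterwards is the use, in Step~3, of the minimum-degree bound $\min_v d(v)\ge\alpha pn$ (Lemma~\ref{lem:degree-lower-bound}): below the connectivity threshold the isolated and near-isolated vertices are precisely those that can remain in the final Blue set, and the task is to show there are few of them.

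Fix a threshold $d^\star\defeq C^\star\sqrt{pn}$, with $C^\star=C^\star(\lambda)$ a large constant to be chosen, and set $N\defeq\bcard{\{v\in V:d(v)<d^\star\}}$. First I would bound $N$. A routine lower-tail (Chernoff/Poisson) estimate for $\Bin(n-1,p)$, using $(d^\star)!\ge(d^\star/e)^{d^\star}$ and $pn=(1-\lambda)\log n$, gives
\[
\Pr\bigl(\Bin(n-1,p)<d^\star\bigr)\ \le\ n^{-(1-\lambda)}(\log n)^{O_\lambda(\sqrt{\log n})},
\qquad\text{so}\qquad
\E{N}\ \le\ n^{\lambda}(\log n)^{O_\lambda(\sqrt{\log n})}.
\]
For the concentration of $N$ I would not use a second moment but the union bound: for a $k$-subset $T\subseteq V$ the events $\{\card{\Neigh(v)\setminus T}<d^\star\}$, $v\in T$, are independent (disjoint edge-neighbourhoods) and each contains $\{d(v)<d^\star\}$, so $\Pr(d(v)<d^\star\ \forall v\in T)\le\Pr(\Bin(n-k,p)<d^\star)^k\le(2\,\E{N}/n)^k$ (as $k=o(n)$, $p=o(1)$). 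Hence $\Pr(N\ge k)\le\binom nk(2\,\E{N}/n)^k\le(2e\,\E{N}/k)^k$, and taking $k=4e\,\E{N}\le n^{\lambda}(\log n)^{O_\lambda(\sqrt{\log n})}$ makes this at most $2^{-n^{\Omega_\lambda(1)}}$. So with overwhelming probability $N\le n^{\lambda}(\log n)^{O_\lambda(\sqrt{\log n})}=o(n)$.

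Next, condition additionally on the spectral estimate $\inf_x\|A_G-x\mathbf{1}_n\mathbf{1}_n^T\|_{op}\le(C+1)\sqrt{pn}$ of Lemma~\ref{lem:matrix-norm-bound} (valid since $pn=\Theta(\log n)$, failing with probability $O(n^{-(1-\lambda)})$), and fix a constant $b\in(\tfrac14,\tfrac12)$. For $t\ge2$ split $B_t=B_t^{\mathrm{lo}}\sqcup B_t^{\mathrm{hi}}$ according to whether $d(v)<d^\star$, so $\card{B_t^{\mathrm{lo}}}\le N$; write $x_t\defeq\card{B_t^{\mathrm{hi}}}$. Feeding $\sum_{v\in B_{t+1}}d(v)\ge x_{t+1}d^\star$, $\card{B_{t+1}}\le x_{t+1}+N$ and $\card{B_t}\le x_t+N$ into Lemma~\ref{lem:hypo-blue-bound-2} gives
\[
\frac{x_{t+1}^2(d^\star)^2}{x_{t+1}+N}\ \le\ \kappa\,pn\,(x_t+N),\qquad \kappa\defeq(C+1)^2\frac{1-b}{(1/2-b)^2}.
\]
Choosing $C^\star$ with $(C^\star)^2>8\kappa$, a short case analysis shows $x_{t+1}<\max\{\tfrac12 x_t,\,N\}$ for every $t\ge2$; in particular $\card{B_t}\le x_t+N<\tfrac14 n+N<bn$ throughout, so Lemma~\ref{lem:hypo-blue-bound-2} keeps applying. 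Iterating from $x_2\le\card{B_2}\le\tfrac14 n$, after $O(\log n)$ days $x_t<N$, and thereafter $\card{B_t}<2N\le n^{\lambda}(\log n)^{O_\lambda(\sqrt{\log n})}$. Since majority dynamics on a finite graph is eventually periodic with period at most $2$ (Goles--Olivos), pick $t^*$ past that $O(\log n)$ threshold at the larger-$\card{B}$ end of the cycle; then $\card{B_{t^*+1}}\le\card{B_{t^*}}<2N$, and, $B_{t^*}$ being the one-step image of some $B'$ with $\card{B'}\le\card{B_{t^*}}<bn$, Lemma~\ref{lem:hypo-blue-bound-2} gives $\card{B_{t^*}}\bigl(\text{average degree in }B_{t^*}\bigr)^2\le(C+1)^2pn\,\tfrac{1-b}{(1/2-b)^2}\card{B'}\le(C+1)^2pn\,\tfrac{1-b}{(1/2-b)^2}\card{B_{t^*}}$, i.e.\ the average degree in $B_{t^*}$ is $O_\lambda(\sqrt{pn})$. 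Adding up the failure probabilities yields the bound stated.

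I expect the main obstacle to be the interaction of the low-degree vertices with the spectral averaging. Lemma~\ref{lem:hypo-blue-bound-2} bounds $\sum_{v\in B_{t+1}}d(v)$ only against the full sizes $\card{B_{t+1}}$ and $\card{B_t}$, not against $\card{B_{t+1}^{\mathrm{hi}}}$; this forces splitting off $B_t^{\mathrm{lo}}$, carrying the $+N$ corrections through the recursion, and calibrating $d^\star\asymp\sqrt{pn}$ so that the contraction on $x_t$ wins as soon as $x_t\gtrsim N$. The second delicate point is getting a genuinely (super-polynomially) small failure probability for the estimate on $N$, which is why a disjoint-neighbourhoods union bound is used rather than Chebyshev.
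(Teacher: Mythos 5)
Your proof is correct and reaches the same conclusion as the paper's, but it takes a genuinely different route at the crucial step. The paper conditions on the event supplied by Lemma~\ref{lem:subcon-avg-deg}, which directly lower-bounds the \emph{average} degree of every subset of size at least $n^{\lambda+\delta}$, and plugs this straight into the recursion from Lemma~\ref{lem:hypo-blue-bound-2} to force $|B_{t+1}| \le |B_t|/2$ down to size $n^{\lambda+\delta}$; the $(\log n)^{O_\lambda(\sqrt{\log n})}$ factor then arises because the constraint $\eps\log(e/\eps)\le\delta/4$ forces $\delta=\Theta\bigl((pn)^{-1/2}\log(pn)\bigr)$, i.e.\ $n^\delta=(\log n)^{\Theta(\sqrt{\log n})}$. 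You instead bound $N$, the \emph{count} of vertices of degree below $C^\star\sqrt{pn}$, via a binomial lower-tail estimate plus a disjoint-neighbourhoods union bound, and then carry a $+N$ correction through the recursion by splitting $B_t$ into its low- and high-degree parts. Your version makes the origin of the $(\log n)^{O_\lambda(\sqrt{\log n})}$ factor more transparent (it is the cumulative mass of the binomial lower tail up to $\Theta(\sqrt{pn})$, relative to the atom at $0$), and it avoids the somewhat delicate optimisation of $\delta$ against $\eps$ in Lemma~\ref{lem:subcon-avg-deg}; the price is some extra bookkeeping in the recursion, which you handle correctly (the case analysis giving $x_{t+1}<\max\{x_t/2,N\}$, and the check that $|B_t|<bn$ persists, both go through). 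The endgame — Goles--Olivos periodicity and a final application of Lemma~\ref{lem:hypo-blue-bound-2} at $t^*$ to extract the $O_\lambda(\sqrt{pn})$ average degree — is essentially identical to the paper's.

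One minor remark: in the concentration step you invoke $\Pr\bigl(\Bin(n-k,p)<d^\star\bigr)\le 2\E{N}/n$; this is correct but deserves a sentence, since reducing the number of trials from $n-1$ to $n-k$ increases the lower-tail probability. The saving grace is that the ratio of each point mass $\Pr(\Bin(n-k,p)=j)/\Pr(\Bin(n-1,p)=j)$ is at most $(1-p)^{-(k-1)}=e^{O(pk)}=1+o(1)$ because $pk=o(1)$ for your choice of $k$, so the factor $2$ holds for $n$ large. It would be worth spelling this out when writing up.
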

\begin{proof}
	Since we only use the assumption that $p \ge (1 + \lambda)n^{-1}\log n$ in the proof of Theorem \ref{thm:main-nonlazy} at Step 3 (Section \ref{sec:day3+}), in particular, to show that the minimum degree in $G$ is $\Omega(pn)$ with high probability, every other argument still works.
	The arguments in Steps 1 and 2 imply that
	\begin{equation*}
		\Pr\left(\card{B_2} \le .25n\right) \ge 1 - O\left(\frac{1}{\min\{n, p\adv^2\}}\right).
	\end{equation*}
	Assuming $\card{B_2} \le .25n$, the first half of the proof of Lemma \ref{lem:step3} gives for all $t \ge 3$,
	Assuming this, we have, with probability at least $1 - n^{-(1 - \lambda)}$, $G$ has a property that implies for all $t \ge 3$,
	\begin{equation}  \label{eq:subcon-main-tmp1}
		\card{B_{t+1}}\Bigl(\frac{1}{\card{B_{t+1}}}\sum_{v\in B_{t+1}}d(v)\Bigr)^2 \le M_\lambda pn\card{B_t},
	\end{equation}
	for some constant $M_\lambda$ depending only on $\lambda$.
	Recall that the next step in Lemma \ref{lem:step3} is to use the fact that the average degree in $B_{t+1}$ is at least the minimum degree in $G$ and is thus $\Omega(pn)$ with high probability (Lemma \ref{lem:degree-lower-bound}), so Blue shrinks by a factor of $\Omega(pn)$ until Red wins.
	When $p = (1 - \lambda)n^{-1}\log n$, the weaker Lemma \ref{lem:subcon-avg-deg} states that if we can choose $0 < \delta < 1 - \lambda$ and $0 < \eps < 1$ such that $\eps\log\left(\frac{e}{\eps}\right) \le \lambda/4$, then with probability $1 - e^{-\Omega(n^\lambda)}$, $G$ satisfies
	\begin{equation*}
		\frac{1}{\card{U}}\sum_{v\in U}d(v) \ge \eps pn \ \text{ for all } U\subset V \text{ such that } \card{U}\ge n^{\lambda + \delta}.
	\end{equation*}
	Plugging this bound into Eq. \eqref{eq:subcon-main-tmp1}, we have, as long as $\card{B_{t+1}} \ge n^{\lambda + \delta}$,
	\begin{equation*}
		\card{B_{t+1}}(\eps pn)^2 \le M_\lambda pn\card{B_t} \implies \card{B_{t+1}} \le \frac{M_\lambda}{\eps^2 pn}\card{B_t}.
	\end{equation*}
	Let us choose $\eps = \sqrt{2M_\lambda/(pn)} = \Theta((pn)^{-1/2})$ so that the inequality above becomes $\card{B_{t+1}} \le \card{B_t}/2$.
	To satisfy the condition $\eps\log\left(\frac{e}{\eps}\right) < \delta/4$, we can choose
	\begin{equation*}
		\delta = 5\sqrt{M_\lambda}(pn)^{-1/2}\log(pn) = O_\lambda(\log^{-1/2} n\log\log n).
	\end{equation*}
	Therefore the number of Blue vertices will keep halving with probability $1 - e^{-\Omega(n^{\lambda})}$ until, for all sufficiently large $t$,
	\begin{equation*}  \label{eq:subcon-main-tmp2}
		\card{B_t} \le n^{\lambda + \delta} = (\log n)^{O_\lambda(\log^{1/2} n)}n^{\lambda}.
	\end{equation*}
	It is independently proven in \cite{goles1981,poljak1983} that at some point $t^*$, the process will become periodic with period at most 2, i.e. $B_{t^*} = B_{t^* + 2}$.
	Without any loss, we can assume $\card{B_{t^*}} \ge \card{B_{t^* + 1}}$.
	Eq. \eqref{eq:subcon-main-tmp1} again implies that
	\begin{equation*}
		\card{B_{t^*}}\Bigl(\frac{1}{\card{B_{t^*}}}\sum_{v\in B_{t^*}}d(v)\Bigr)^2 \le M_\lambda pn\card{B_{t^* + 1}} \le M_\lambda pn\card{B_{t^*}},
	\end{equation*}
	This implies the average degree in $B_{t^*}$ is at most $\sqrt{M_\lambda pn}$, so Eq. \eqref{eq:subcon-main-tmp2} implies that $\card{B_{t^*}} \le (\log n)^{O_\lambda(\log^{1/2} n)}n^{\lambda}$, thus so is $\card{B_{t^* + 1}}$.
	The proof is complete.
\end{proof}

\begin{remark}
	Theorem \ref{thm:subcon-main} is optimal up to a factor of $(\log n)^{O_\lambda(\log^{1/2} n)}$, since the argument preceding it guarantees there are  $\Omega(n^\lambda)$ Blue vertices remaining with high probability.
	If the process becomes stable, most remaining Blue vertices also have very low degrees ($o(pn)$).
	If the process alternates between $B_{t^*}$ and $B_{t^* + 1}$, although we showed that $B_{t^*}$, the larger set, mostly has low-degree vertices, this may not be true for $B_{t^* + 1}$.
	An example where $B_{t^* + 1}$ has high-degree vertices is a \emph{flower}, a connected component where a single vertex $v$, called the \emph{center} is connected to $\Omega(pn)$ leaves.
	If $B_{t^* + 1}$ contains mostly centers of flowers, while $B_{t^*}$ contains mostly leaves, the average degree in $B_{t^* + 1}$ will be $\Omega(pn)$.
\end{remark}

\section{Random Activations and Updates: Proof of Theorem \ref{thm:main-lazy}} \label{sec:main-lazy}

We show the theorem below, which can be readily seen to imply Theorem \ref{thm:main-lazy}.

\begin{theorem}  \label{thm:main-lazy-details}
	Consider the majority dynamics process on $G(n, p)$ with activation rate $\pac$ and updating rate $\pup$.
	Assume that $\card{R_0} = n/2 + \adv$ and $\pup$, $\pac$ and $p$ satisfy $p\pac n \ge (1 + \lambda)\log n$ for a constant $\lambda > 0$, and
    $p\pup\pac^2\adv \ge M$ for a constant $M > 0$.
	Then a Red unanimity is reached within time $O(\pup^{-1}\pac^{-1}\log n)$ with probability at least
	\begin{equation*}
		1 - O\left(\frac{1}{\pup^2\pac^2\min\{n, p\pac \adv^2\}}\right) - e^{-\Omega(\pup^2\pac^4p\adv^2)} - O(n^{-\lambda/4}) - n^{-\Omega(\pup\pac)}.
	\end{equation*}
\end{theorem}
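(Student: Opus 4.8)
The plan is to reproduce the three‑phase architecture of the proof of Theorem~\ref{thm:main-nonlazy}, with every parameter replaced by its \emph{effective} value: on a given day an edge of $G$ at a vertex $v$ is ``felt'' by $v$ only if the far endpoint is active (probability $p\pac$), and $v$ changes color only if it is simultaneously active and updating (probability $\pup\pac$). I would always expose the day-$t$ activations and updates \emph{before} the day-$t$ transitions, so that conditionally on the history through day $t$ these indicators are fresh and independent across vertices. \emph{Phase 1 (Day 1).} For each $v$ put $\dif_0^{\textup{ac}}(v)\defeq\sum_u \Clr_0(u)\,\one_{u\adj v}\,\one_{u \textup{ active}}$, an i.i.d.\ sum taking the values $\pm 1$ with probability $p\pac$ and $0$ otherwise, with $\E{\dif_0^{\textup{ac}}(v)}\ge p\pac(2\adv-1)$ and $\Var{\dif_0^{\textup{ac}}(v)}=p\pac(1-p\pac)(n-1)$. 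A Red vertex enters $B_1$ only if it is active, updating, and has $\dif_0^{\textup{ac}}(v)<0$; a Blue vertex stays in $B_1$ unless it is active, updating, and has $\dif_0^{\textup{ac}}(v)>0$. Feeding this into the Berry--Esseen estimate of Corollary~\ref{lem:strongesseen} exactly as in Proposition~\ref{prop:day1-exp-var}, but with the extra factor $\pup\pac$ and density $p\pac$, gives $\E{\card{B_1}}\le n/2-\Omega\bigl(\pup\pac\min\{n,\adv\sqrt{p\pac n}\}\bigr)$, while the covariance bookkeeping of that proposition goes through verbatim to give $\Var{\card{B_1}}=O(n)$ (the activation indicators only add a further layer of independent noise). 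Chebyshev then yields $\card{B_1}\le n/2-\Omega(\pup\pac\min\{n,\adv\sqrt{p\pac n}\})$ except with probability $O\bigl(1/(\pup^2\pac^2\min\{n,p\pac\adv^2\})\bigr)$, which is the first error term.

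\emph{Phase 2 (Day 2).} Write the conclusion of Phase~1 as $\card{B_1}\le b_1 n$ with $b_1=\tfrac12-\Theta\bigl(\pup\pac\min\{1,\adv\sqrt{p\pac/n}\}\bigr)<\tfrac12$. I would prove a lazy analogue of Lemmas~\ref{lem:hypo-blue-bound} and~\ref{lem:step2} by fixing a Blue set $B$ with $\card B=\lfloor b_1 n\rfloor$ and a target set $S$ with $\card S=\lfloor b n\rfloor+1$ for a suitable constant $b<\tfrac12$, and bounding $\Pr(S\subseteq B_2\mid B_1=B)$. The day-$2$ activation/update indicators of the vertices of $S$ are mutually independent and contribute a factor $(\pup\pac)^{\card{S\cap R}}$; conditionally on them, each event $\{\dif_1^{\textup{ac}}(v)<0\}$ (for $v\in S\cap R$) or $\{\dif_1^{\textup{ac}}(v)\le 0\}$ (for $v\in S\cap B$) is a large-deviation event controlled by a Chernoff bound, since on day $1$ the Red margin $\card{R_1}-\card{B_1}$ is $\Omega(n)$ plus the Phase-1 surplus of order $\pup\pac\,\adv\sqrt{p\pac n}$. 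A double union bound over $B$ and $S$ then gives $\card{B_2}\le bn$ except with probability $e^{-\Omega(\pup^2\pac^4 p\adv^2)}$: the surplus enters the Chernoff exponent as $(\pup\pac^2 p\adv)^2$, and making this beat the union-bound entropy is exactly what forces the hypothesis $\pup\pac^2 p\adv\ge C$ with $C$ a sufficiently large absolute constant.

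\emph{Phase 3 (Day 3 onward).} Once $\card{B_t}\le bn$ with $b<\tfrac12$ constant, I would adapt Lemmas~\ref{lem:hypo-blue-bound-2} and~\ref{lem:step3}. Each vertex that \emph{updates and turns Blue} on day $t+1$ has at least half of its active day-$t$ neighbors inside $B_t$; bounding the associated bilinear form against the active-edge matrix $A^{\textup{ac}}_G$ — whose deviation from its mean is $O(\sqrt{p\pac n})$ by Lemma~\ref{lem:matrix-norm-bound}, and whose average degree over every vertex set of size $\ge\card{B_t}$ is $\Omega(p\pac n)$ by Lemmas~\ref{lem:degree-lower-bound} and~\ref{lem:subcon-avg-deg} applied to the active graph (using $p\pac n\ge(1+\lambda)\log n$) — shows this ``newly Blue'' set has size $O\bigl(\card{B_t}/(p\pac n)\bigr)$. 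Combined with the at most $(1-\pup\pac)$-fraction of $B_t$ that simply does not update, $\card{B_t}$ contracts by a factor $1-\Omega(\pup\pac)$ per day, so after $O(\pup^{-1}\pac^{-1}\log n)$ days $B_t=\varnothing$. The regularity events for $A^{\textup{ac}}_G$ cost $O(n^{-\lambda/4})$, and the event that some Blue vertex never activates-and-updates throughout a window of length $\Theta(\pup^{-1}\pac^{-1}\log n)$ costs $n^{-\Omega(\pup\pac)}$; summing the four contributions gives the stated bound.

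\emph{Main obstacle.} The crux is Phase~3. Unlike in Theorem~\ref{thm:main-nonlazy}, a Blue vertex need not flip even against an overwhelming Red majority — it may simply never update — and a ``flower''-type Blue clump can keep a single vertex Blue for many rounds; reconciling the geometric $(1-\pup\pac)$-decay coming from the update randomness with the spectral contraction, and certifying that \emph{no} Blue vertex is left behind, is what slows the convergence to $O(\pup^{-1}\pac^{-1}\log n)$ and introduces the extra $n^{-\Omega(\pup\pac)}$ loss. A lesser nuisance is keeping the activation randomness ``independent enough'' in the Phase-2 union bound, since the margins $\dif_1^{\textup{ac}}(v)$ for distinct $v\in S$ share the activation statuses of common neighbors; this can be handled by the FKG inequality, or more crudely by first conditioning on all day-$2$ activations and then union bounding over $S$.
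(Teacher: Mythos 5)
Your Phase 1 tracks the paper's Lemma~\ref{lem:day1-lazy} faithfully, and your Phase 3 uses the same decomposition the paper uses (newly Blue updating vertices plus non-updating Blues). But Phase 2 as you describe it has a genuine gap. You characterize $\{S\subseteq B_2\mid B_1=B\}$ via the Chernoff events $\{\dif_1^{\textup{ac}}(v)\le 0\}$ for $v\in S\cap B$, but this omits the dominant way a Blue vertex stays Blue on day 2: by simply not updating. Conditioned on $v\in B_1$ we have $\Pr(v\in B_2)\ge 1-\pup\pac$, which is $\Theta(1)$, so for any $S\subseteq B_1$ consisting only of Blue vertices, $\Pr(S\subseteq B_2)\ge(1-\pup\pac)^{|S|}$ and the double union bound over $(B,S)$ cannot beat the $\binom{n}{|S|}$ entropy term for constant $\pup\pac<1$. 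There is no useful lazy analogue of Lemma~\ref{lem:hypo-blue-bound} in this union-bound form. The paper instead writes
\[
\card{B_{t+1}}=\card{B_t\setminus \upd{B_t}}+\card{\upd{\hypo{B}_t}},
\]
bounds the first summand (a $\Bin(\card{B_t},1-\pup\pac)$ variable of deterministic size) with a single Chernoff estimate needing no union bound, and bounds the second by viewing $\ac{\hypo{B}_t}$ as the next-day Blue camp of the non-lazy process on the induced active subgraph $G(\ac{V_t})$, so that Lemma~\ref{lem:step2} applies verbatim there. You already use exactly this decomposition in Phase 3; it is needed in Phase 2 as well, and the naive $(B,S)$ union bound must be discarded.

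A lesser but still real vagueness is the hand-off at the end of Phase 3. The $(1-\Omega(\pup\pac))$-per-day contraction relies on Chernoff bounds whose error is $e^{-\Omega(\pup\pac\card{B_t})}$, which becomes $\Omega(1)$ once $\card{B_t}=O(1/(\pup\pac))$; so the contraction argument alone cannot certify $B_t=\varnothing$. The paper stops the contraction at $\card{B_{t^*_1}}<\alpha_\lambda(\log n)/2$ and then switches to a separate argument: with probability $1-O(n^{-\lambda/3})$ per day the minimum active degree is at least $\alpha_\lambda p\pac n\ge 2\card{B_{t^*_1}}$, so any surviving Blue vertex that updates during the next $O(\log n)$ days must turn Red, and all survive-then-update events occur except with probability $O(n^{-\pup\pac/2}\log n)$. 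You allude to this ("certifying that no Blue vertex is left behind"), and your $n^{-\Omega(\pup\pac)}$ term is the right price, but the proposal reads as if the spectral contraction alone reaches $\varnothing$, which it does not; the explicit small-$\card{B_t}$ endgame should be made a separate step.
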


We modify the proof of Theorem \ref{thm:main-nonlazy} slightly to obtain the proof of Theorem \ref{thm:main-lazy-details}.
To prepare for the technical additions, we define several new notations.
\begin{itemize}
	\item For a given set $S\subset V$, let $\upd{S}$ denote the set of updating vertices in $S$.
	If $S_t$ is a set that changes with time, let $\upd{S_t}$ denote the set of updating vertices in $S_t$ on day $t$.
	The notations $\ac{S}$ and $\ac{S_t}$ are
    analogously
    defined for active vertices. 
	\item Given a coloring $\Clr$, let $\hypo{\Clr}$ be the hypothetical coloring the next day \emph{if} every node is active and updating, i.e. $\hypo{\Clr}_t = \Clr_{t+1}$ in the base majority dynamics model.
	Let $\hypo{R}$ and $\hypo{B}$ be the respective Red and Blue sides under $\hypo{\Clr}$.
	\item $u \obs v$ ($u \obs_t v$) denotes the event $u$ observes $v$ (on day $t$).
\end{itemize}

Our proof consists of three main propositions.
\begin{proposition}  \label{prop:day1-lazy}
	Consider majority dynamics under the setting of Theorem \ref{thm:main-lazy-details}.
    For every $\eps \in (0, 1)$ and $T > 0$, there are $M_K$ and $S_\eps$, $T_\eps$ such that, when
    \[
    \pac^{1/2}\adv \ge S_\eps n^{1/2},
    \quad p^{1/2}\pup\pac\adv \ge T_\eps,
    \quad \text{ and } \quad p\pup\pac^2\adv \ge M_K,
    \]
    we have, with probability at least $1 - \eps$,
    \begin{equation} \label{eq:main-lazy-details-temp1}
        \card{B_1} \le \frac{n}{2} - K\sqrt{\frac{n}{p\pac}}.
    \end{equation}
\end{proposition}



The proof of this proposition is almost identical to that of Lemma \ref{lem:day1} and will be put in Appendix \ref{sec:shrink-scheme}.

At this point we can assume that $\adv$ is small enough so that $\adv\sqrt{p\pac n} \le n$ and $\card{B_1} \ge \frac{n}{2} - .2 \pup\pac \adv\sqrt{p\pac n} \ge (1/2 - .2\pup\pac)n \ge .3n$.
Larger values of $\adv$ or smaller $\card{B_1}$ can only increase the probability of Red winning.


\begin{proposition}  \label{prop:main-lazy-details-prop2}
	Under the assumptions of Theorem \ref{thm:main-lazy-details} and the assumption \eqref{eq:main-lazy-details-temp1}, for every $\eps > 0$ and $K \ge 8$ there is $t^*_1 = O\left(\pup^{-1}\pac^{-1}\log n\right)$ such that, with probability at least
	\begin{equation*}
		1 - e^{-\Omega(K^2p^{-1})} - n^{-\Omega(\pup\pac)},
	\end{equation*}
	we have, for $\alpha_\lambda$ defined in Lemma \ref{lem:degree-lower-bound},
	\begin{equation}  \label{eq:main-lazy-details-temp2}
		\card{B_{t^*_1}} \le \alpha_\lambda(\log n)/2.
	\end{equation}
\end{proposition}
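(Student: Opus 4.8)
The plan is to replay Steps~2 and~3 of the proof of Theorem~\ref{thm:main-nonlazy} in the lazy setting, with two structural changes forced by the activation/update mechanism. First, on any given day at most a $\pup\pac$-fraction of the currently Blue vertices are active \emph{and} choose to update, so $B_{t+1}\supseteq B_t\setminus\upd{B_t}$ and the most one can hope for is that $\card{B_t}$ contracts by a factor $1-\Omega(\pup\pac)$ per day, rather than by the factor $\Omega(pn)$ available in the non-lazy case; this is exactly what accounts for the bound $t^*_1=O(\pup^{-1}\pac^{-1}\log n)$. Second, an updating vertex $v$ sees only its \emph{active} neighbours, which, conditioned on $\Clr_t$, form an independent $\pac$-thinning of $\Neigh(v)$; consequently the relevant signed sum $\sum_{u\in\Neigh(v)}\Clr_t(u)\one_{u\in\On_{t+1}}$ has conditional mean $\pac\,\dif_t(v)$ and variance at most $\pac\,d(v)$, which is why Lemma~\ref{lem:day1-lazy} carries $p\pac$ everywhere in place of $p$. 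Throughout I work on the ``nice graph'' event $\ca E$ on which $\|A_G-p\,\one_n\one_n^T\|_{op}=O(\sqrt{pn})$ (Lemma~\ref{lem:matrix-norm-bound}), $\min_v d(v)\ge\alpha_\lambda pn$ (Lemma~\ref{lem:degree-lower-bound}), and — by a Chernoff bound for $\Bin(d(v),\pac)$ together with a union bound over all $v$ and all $t\le t^*_1$, using $p\pac n\ge(1+\lambda)\log n$ — every vertex has at least $\alpha_\lambda p\pac n/4$ active neighbours on each of the first $t^*_1$ days; $\ca E$ fails with probability at most $n^{-\Omega(\pup\pac)}$, which I absorb into the error term.

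\textbf{Step~A: down to a small constant fraction.} By \eqref{eq:main-lazy-details-temp1} and the reduction to $\adv\sqrt{p\pac n}\le n$ already made, $\card{B_1}\le(\tfrac12-\eps_1)n$ with $\eps_1 n=.21\pup\pac\adv\sqrt{p\pac n}$, hence $\eps_1\sqrt{p\pac n}=.21\pup\pac^2 p\adv\ge 2.1$. I run the lazy analogue of Lemma~\ref{lem:hypo-blue-bound}/Lemma~\ref{lem:step2}: fix $B_t$ and a candidate ``survivor'' set $S$, bound $\Pr(S\subseteq B_{t+1}\mid B_t)$ by a product of per-vertex probabilities using the edge/activation decomposition from the proof of Lemma~\ref{lem:hypo-blue-bound} — a vertex that is Blue and not updating contributes a factor $1-\pup\pac$, while a vertex that updates and still lands in $B_{t+1}$ contributes a Berry--Esseen-controlled factor at most $\pup\pac\bigl(\tfrac12-\Phi_0(2\eps_t\sqrt{p\pac n})+\Cbe/\sqrt{p\pac n}\bigr)$ — and then take a double union bound over $B_t$ and $S$. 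Since at most a $\pup\pac$-fraction of $B_t$ updates, one day decreases $\card{B_t}$ additively by $\Omega(\pup\pac n)$ while $\card{B_t}$ is still $\ge(\tfrac12-c)n$, and the bias $\eps_t$ only increases along the way, so the same bound applies at each step; iterating for $O(1/(\pup\pac))=O(1)$ days brings $\card{B_t}$ below $b_0 n$ for $b_0\defeq\alpha_\lambda\pac/16$. Optimising the double union bound exactly as in the proof of Lemma~\ref{lem:step2}, Step~A fails with probability at most $e^{-\Omega(\pup^2\pac^4 p\adv^2)}$; this is the only place the hypothesis ``$p\pac\adv\ge C$'' with $C$ large enters, ensuring the per-day exponential gain beats the $2^{(1+o(1))n}$ entropy of the union bound.

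\textbf{Step~B: from $b_0 n$ down to $\tfrac12\alpha_\lambda\log n$.} Suppose $\card{B_t}\le b_0 n$ and work on $\ca E$. Write $B_{t+1}\subseteq(B_t\setminus\upd{B_t})\cup T_t$, where $T_t$ is the set of updating vertices at least half of whose active neighbours lie in $B_t$. For the first piece, $\card{\upd{B_t}}\sim\Bin(\card{B_t},\pup\pac)$, so by Chernoff $\card{B_t\setminus\upd{B_t}}\le(1-\tfrac12\pup\pac)\card{B_t}$ except with probability $e^{-\Omega(\pup\pac\card{B_t})}$. For $T_t$, each $v\in T_t$ has at least $\alpha_\lambda p\pac n/8$ neighbours in $B_t$, so — exactly in the spirit of Lemma~\ref{lem:hypo-blue-bound-2}, via the expander-mixing bound $e_G(T_t,B_t)\le p\card{T_t}\card{B_t}+O(\sqrt{pn})\sqrt{\card{T_t}\card{B_t}}$ (here $e_G(\cdot,\cdot)$ counts $G$-edges between two sets) — the fact that $p\card{B_t}\le\tfrac{1}{16}\alpha_\lambda p\pac n$ absorbs the first term yields $\card{T_t}=O\bigl(\card{B_t}/(p\pac^2 n)\bigr)=o(\card{B_t})$, since $p\pac^2 n\ge\pac(1+\lambda)\log n\to\infty$. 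Hence $\card{B_{t+1}}\le(1-\tfrac12\pup\pac)\card{B_t}+o(\card{B_t})\le(1-\tfrac14\pup\pac)\card{B_t}<b_0 n$, and the recursion continues. After $O(\log n/(\pup\pac))$ further days $\card{B_t}$ drops below $\tfrac12\alpha_\lambda\log n$; taking $t^*_1$ to be that day gives $t^*_1=O(\pup^{-1}\pac^{-1}\log n)$. Since throughout Step~B $\card{B_t}\ge\tfrac12\alpha_\lambda\log n=\Omega(\log n)$, summing the per-day Chernoff failures over these $O(\pup^{-1}\pac^{-1}\log n)$ days contributes $e^{-\Omega(\pup\pac\log n)}\cdot(\log n)^{O(1)}=n^{-\Omega(\pup\pac)}$.

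\textbf{Main obstacle.} Step~B is a routine transcription of Step~3, the only new input being the uniform lower bound on active degrees, which costs nothing beyond $p\pac n\ge(1+\lambda)\log n$. The delicate point is Step~A: one must re-run the double-union-bound Chernoff argument of Lemma~\ref{lem:hypo-blue-bound} while correctly placing the thinning factors — $\pup\pac$ for ``updates at all'' and an extra $\pac$ inside each neighbourhood — and then verify that the surviving per-vertex bias is still large enough, quantitatively $\eps_1\sqrt{p\pac n}\ge 2.1$, for the exponential savings $e^{-\Omega(\pup^2\pac^4 p\adv^2)}$ to dominate the entropy of the double union bound; this is precisely what forces the constant $C$ in ``$p\pac\adv\ge C$'' to depend on $\pac$ and $\pup$.
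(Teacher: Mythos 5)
The decomposition you start from, $\card{B_{t+1}}=\card{B_t\setminus\upd{B_t}}+\card{\upd{\hypo{B}_t}}$, is the same one the paper uses, and your high-level plan (first-day/constant-fraction phase, then an expander-mixing phase) mirrors the paper's Case~1/Case~2 split. But the way you control the updating contribution is genuinely different. The paper's trick is to \emph{condition on the active set} $\ac{V_t}$: once $\On_t$ is exposed, $\ac{\hypo{B}_t}$ is exactly the next-day Blue camp of the \emph{ordinary} majority dynamics run on the induced random subgraph $G(\ac{V_t})$, so Lemma~\ref{lem:step2} and Lemmas~\ref{lem:hypo-blue-bound-2}/\ref{lem:step3} apply verbatim to that subgraph; afterwards $\upd{\hypo{B}_t}$ is just a $\pup$-thinning of $\ac{\hypo{B}_t}$, bounded by Lemma~\ref{lem:gen-chernoff}. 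The term $e^{-\Omega(\pup^2\pac^4 p\adv^2)}$ has a very specific origin there: it is the Chernoff cost of concentrating $\card{\ac{B_1}}$ and $\card{\ac{V_1}}$ tightly enough that the day-$1$ gap survives the random $\pac$-thinning. You instead propose to re-derive ``lazy'' versions of Lemma~\ref{lem:hypo-blue-bound} and Lemma~\ref{lem:hypo-blue-bound-2} from scratch with the activation indicators folded in. That is a viable route in principle, but it is considerably less modular, and two points in your sketch do not hold up as written.

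First, in Step~A you say you will bound $\Pr(S\subseteq B_{t+1}\mid B_t)$ ``by a product of per-vertex probabilities.'' The per-vertex events $\{v\in B_{t+1}\}$ for $v\in S$ are not independent: they share edges (of $G$) and activation indicators (of $\On_{t+1}$). Lemma~\ref{lem:hypo-blue-bound}'s exponential-moment calculation does not factor over vertices; it factors over \emph{edges} of the random graph, using that the indicators $\mathbf{1}_{u\adj v}$ are independent across pairs. To redo this with laziness you would need to replace $\mathbf{1}_{u\adj v}$ by something like $\mathbf{1}_{u\adj v}\mathbf{1}_{u\in\On_{t+1}}$ in the MGF computation, and the ``does $v$ update at all'' randomness then multiplies the whole per-vertex event rather than factoring cleanly into the edge sum. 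Your sketch elides exactly the calculation that makes the paper's approach nontrivial, and the paper sidesteps it entirely by conditioning on $\ac{V_t}$ first. Relatedly, the claim that ``Step~A fails with probability at most $e^{-\Omega(\pup^2\pac^4 p\adv^2)}$'' because ``the per-day exponential gain beats the $2^{(1+o(1))n}$ entropy'' conflates two different error sources: the entropy of the double union bound has to be beaten by an $e^{-\Omega(n)}$ gain (as in Lemma~\ref{lem:step2}), while $e^{-\Omega(\pup^2\pac^4 p\adv^2)}$ is generally much larger than $e^{-\Omega(n)}$ and cannot beat that entropy; in the paper it comes from a different step (concentration of $\card{\ac{B_1}}$) and only needs to hold once.

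Second, your Step~B is essentially a self-contained alternative to the paper's Case~2 and it does work, but note that by applying expander mixing to $G$ rather than to $G(\ac{V_t})$, you need $p\card{B_t}\ll\alpha_\lambda p\pac n$, i.e.\ $\card{B_t}\le\alpha_\lambda\pac n/16$, before the argument kicks in, whereas the paper's version already works from $\card{B_t}\le(1/2-.1\pup\pac)n$ because the $(1/2-b')^{-2}$ factor absorbs the $\Theta(\pup\pac)$ margin. This forces your Step~A to push $\card{B_t}$ down by a factor of $\Theta(\pac)$ rather than a factor of $\Theta(1)$, which is harmless when $\pac$ is a constant, but it shows that the paper's conditioning trick buys uniformity in $b$ that your version does not have.

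In summary: the strategy is a legitimate, more ``direct'' alternative to the paper's reduction, and your Step~B can be made rigorous more or less as stated, but Step~A as sketched has a real gap (the vertex-factorisation claim and the mislocated $e^{-\Omega(\pup^2\pac^4 p\adv^2)}$ term), and you would need to redo the edge-level MGF computation from Lemma~\ref{lem:hypo-blue-bound} with the activation thinning carried explicitly, or simply adopt the paper's conditioning-on-$\ac{V_t}$ device, which makes the whole lazy proof a two-line corollary of the non-lazy lemmas.
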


\begin{proof}
	For each $t\ge 2$, we split the next-day Blue camp into two disjoint camps, the non-updating currently Blues, and the updating hypothetically Blues, so that 
	\begin{equation}  \label{eq:main-lazy-details-temp0}
		\card{B_{t+1}} = \card{B_t\setminus \upd{B_t}} + \card{\upd{\hypo{B}_t}}.
	\end{equation}
	The former is a $\Bin(\card{B_t}, 1 - \pup\pac)$ variable, so by a standard Chernoff bound (e.g. Lemma \ref{lem:gen-chernoff}), we have, with probability $1 - e^{-\Omega(\pup\pac n)}$,
	\begin{equation}  \label{eq:main-lazy-details-inupdating-bound}
		(1 - 1.1\pup\pac)\card{B_t} \le \card{B_1\setminus \upd{B_t}} \le (1 - .9\pup\pac)\card{B_t}.
	\end{equation}
	To bound the latter, note that $\card{\upd{\hypo{B}_t}} \sim \Bin(\card{\ac{\hypo{B}_t}}, \pup)$,
	where $\ac{\hypo{B}_t}$ can be viewed as the next day's Blue camp inside the subgraph $G(\ac{V_t})$ in the normal case, which can be shown to shrink from $\ac{B_t}$ using Lemma \ref{lem:step2} (for $t = 2$) or Lemma \ref{lem:hypo-blue-bound-2} (for $t \ge 3$).
	Thus we can find a bound on $\card{\ac{\hypo{B}_t}}$, which then leads to a bound on $\card{\upd{\hypo{B}_t}}$ with sufficiently high probability.
	The exact details differ for Day 2 and Day $t\ge 3$, just like the normal case, so we split this step into two subcases:
	
	\textit{Case 1. Bounding $\card{\upd{\hypo{B}_2}}$.}	
	Let $M \defeq Kp^{-1/2}/4 \le \sqrt{\pac n}$.
	By a Chernoff bound (Lemma \ref{lem:gen-chernoff}), we have, with probability $1 - e^{-\Omega(M^2)} = 1 - e^{-\Omega(K^2p^{-1})}$,
	\begin{equation*}
        \card{\ac{B_1}} = \pac\card{B_1} \pm T\sqrt{\pac\card{B_1}}, \quad
			\card{\ac{V_1}} = \pac n \pm T\sqrt{\pac n} \le 1.1\pac n.
	\end{equation*}
	Assuming the above, we have
	\begin{equation*}
    \begin{aligned}
			& \frac{\card{\ac{V_1}}}{2} - \card{\ac{B_1}}
			\ge \pac\left(\frac{n}{2} - \card{B_1}\right) - 2T\sqrt{\pac n} \\
			& \ge K\sqrt{\frac{\pac n}{p}} - 2T\sqrt{\pac n}
            = \frac{K}{2}\sqrt{\frac{\pac n}{p}} \ge \frac{K}{2}\sqrt{\frac{\card{\ac{V_1}}}{1.1p}}
            \ge 2\sqrt{\frac{\card{\ac{V_1}}}{p}}.
	\end{aligned}
	\end{equation*}
 
	By Lemma \ref{lem:step2} in the normal case, with probability $1 - e^{-\Omega(\pac n)}$,
	the subgraph $G(\ac{V_1})$ is such that $\card{\ac{\hypo{B}_2}} \le .3\card{\ac{V_1}} \le .33\pac n$.
	By a generalization of Chernoff bound (Lemma \ref{lem:gen-chernoff}), we have $\card{\upd{\hypo{B}_2}} \le 1.05\card{\ac{\hypo{B}_2}} \le .35\pup\pac n$ with probability $1 - \exp(-\Omega(\pup\pac n))$.
	In total, with probability
	\begin{equation*}
		1 - e^{-\Omega(\pup\pac n)} - e^{-\Omega(\pac n)} - e^{-\Omega(K^2p^{-1})}
		= 1 - e^{-\Omega(\pup\pac n)} - e^{-\Omega(K^2p^{-1})},
	\end{equation*}
	we can plug this bound on $\card{\upd{\hypo{B}_2}}$ and Eq. \eqref{eq:main-lazy-details-inupdating-bound} into Eq. \eqref{eq:main-lazy-details-temp0} to get
	\begin{equation}  \label{eq:main-lazy-details-temp3}
		\card{B_2} \le .35\pup\pac n + (1 - .9\pup\pac)\card{B_1} \le \left(\frac{1}{2} - .1\pup\pac\right)n.
	\end{equation}
	
	\textit{Case 2. Bounding $\card{\upd{\hypo{B}_t}}$ for $t \ge 3$ (until $\card{B_t} < \alpha_\lambda(\log n)/2$).}
	Assume $\card{B_t} \le bn$ for some $t\ge 2$ and $\alpha_\lambda(\log n)/2 \ge b \ge 1/2 - .1\pup\pac$.
	With probability $1 - e^{-\Omega(\pac\log n))}$, we have
	\begin{equation*}
		\begin{aligned}
			\card{\ac{B_t}} & = \pac \card{B_t} \pm \sqrt{\pac \card{B_t} (\pac \log n)} = \pac \card{B_t} \pm \pac \sqrt{b n\log n} \\
			\card{\ac{V_t}} & = \pac n \pm \sqrt{\pac n (\pac \log n)} = \pac n \pm \pac \sqrt{n \log n}.
		\end{aligned}
	\end{equation*}
	Therefore,
	\begin{equation*}
		\frac{\card{\ac{B_t}}}{\card{\ac{V_t}}} = \frac{\pac \card{B_t} \pm \pac \sqrt{b n\log n}}{\pac n \pm \pac \sqrt{n \log n}}
		= \frac{\card{B_t}}{n} \pm \frac{\sqrt{b\log n}}{\sqrt{n}} = b'.
	\end{equation*}
	When $b \le 1/2 - .1\pup\pac$, for $n$ sufficiently large, the above implies
	\begin{equation*}
		\frac{\card{\ac{B_t}}}{\card{\ac{V_t}}} \le b' \le \frac{1}{2} - .09\pup\pac.
	\end{equation*}
	By Lemma \ref{lem:step3}, with probability $1 - O(n^{-1})$, the subgraph $G(\ac{V_t})$ satisfies, for some universal constant $C$,
	\begin{equation*}
		\card{\ac{\hypo{B}_t}} \le \frac{C}{p\card{\ac{V_t}}} \frac{(1 - b')\card{\ac{B_t}}}{(1/2 - b')^2} 
		\le \frac{C(1/2 + .09\pup\pac)\card{\ac{B_t}}}{(.09)^2\pup^2\pac^2pn}  \le \frac{C'\pac \card{B_t}}{pn} \le \frac{\pac \card{B_t}}{20}.
	\end{equation*}
	By Lemma \ref{lem:gen-chernoff}, with probability $1 - e^{-\Omega(\pup\pac\card{B_t})}$, $\card{\upd{\hypo{B}_t}} \le .1\pup\pac \card{B_t}$.
	In total, with probability at least
	\begin{equation*}
		1 - e^{-\Omega(\pac\log n)} - e^{-\Omega(\pup\pac\card{B_t})} - O(n^{-1})
		= 1 - n^{-\Omega(\pup\pac)},
	\end{equation*}
	we can plug this bound on $\card{\upd{\hypo{B}_t}}$ and Eq. \eqref{eq:main-lazy-details-inupdating-bound} into Eq. \eqref{eq:main-lazy-details-temp0} to get
	\begin{equation}  \label{eq:gen-temp1}
		\card{B_{t+1}} 
		\le .1\pup\pac\card{B_t} + (1 - .9\pup\pac)\card{B_t} = (1 - .8\pup\pac)\card{B_t}.
	\end{equation}
	Let $t^*_1$ be the first time for which $(1 - .8\pup\pac)^{t^*_1 - 2}\card{B_2} < \alpha_\lambda(\log n)/2$, so
	\begin{equation*}
		t^*_1 = O(\log_{1 - .8\pup\pac}n) = O\left(\frac{\log n}{\pup\pac}\right).
	\end{equation*}
	By a union bound, the probability that \eqref{eq:gen-temp1} holds for all $t\in [2, t^*_1-1]$, meaning $\card{B_{t^*_1}} < \alpha_\lambda(\log n)/2$, given that $\card{B_2} \le (1/2 - .1\pup\pac)n$, is at least
	\begin{equation*}
		1 - \frac{\log n}{\pup\pac}n^{-\Omega(\pup\pac)} = 1 - n^{-\Omega(\pup\pac)}.
	\end{equation*}
	Including the probability that the condition $\card{B_2} \le (1/2 - .1\pup\pac)n$ does not hold, we get the probability bound in the proposition.
\end{proof}

\begin{proposition}
	Under the assumptions of Theorem \ref{thm:main-lazy-details} and the assumption \eqref{eq:main-lazy-details-temp2}, there is $t^*_2 = t^*_1 +  O\left(\log n\right)$ such that Red wins after Day $t^*_2$ with probability at least
	$1 - O(n^{-\lambda/4}) - O(n^{-\pup\pac/2})$.
\end{proposition}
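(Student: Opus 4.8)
The plan is to carry the bookkeeping of the preceding two propositions one step further: starting from $\card{B_{t^*_1}}\le\alpha_\lambda(\log n)/2$ (the assumption \eqref{eq:main-lazy-details-temp2}) we drive the Blue camp to $\varnothing$ within $O(\log n)$ more days. Throughout we condition on graph events that are deliberately \emph{insensitive to the coloring}: the single event $\mathcal E_0=\{\|A_G-p\mathbf 1\mathbf 1^T\|_{op}<(C+1)\sqrt{pn}\}$ of Lemma \ref{lem:matrix-norm-bound} (probability $1-n^{-\lambda}$), and, for each of the $O(\log n)$ relevant days $t$, the event $\mathcal G_t$ that $\card{\ac{V_t}}=(1-o(1))\pac n$ and that $G(\ac{V_t})$ has minimum degree at least $\tfrac12\alpha_\lambda p\pac n$. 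Since $\ac{V_t}=\On_t$ is fresh randomness independent of $G$, the subgraph $G(\ac{V_t})$ is, marginally, a copy of $G(\card{\ac{V_t}},p)$ with $p\card{\ac{V_t}}\ge(1+\lambda-o(1))\log\card{\ac{V_t}}$, so Lemma \ref{lem:degree-lower-bound} gives $\Pr(\mathcal G_t)\ge 1-O(n^{-\lambda/2})$, whence $\bigcap_t\mathcal G_t$ holds with probability $1-O(n^{-\lambda/4})$. Isolating these coloring-free events is exactly what lets the rest of the argument union only over the $O(\log n)$ days rather than over the $\Theta(pn\log n)$ vertices adjacent to $B_t$.

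On $\mathcal E_0\cap\mathcal G_t$, and so long as $\card{B_t}=O(\log n)$ (so that $\card{\ac{B_t}}/\card{\ac{V_t}}=o(1)$), Lemma \ref{lem:hypo-blue-bound-2} applied to $G(\ac{V_t})$ with Blue set $\ac{B_t}$ --- using that the average degree inside $\ac{\hypo{B}_t}$ is at least the minimum degree of $G(\ac{V_t})$, and that a principal submatrix of $A_G-p\mathbf 1\mathbf 1^T$ cannot have larger operator norm --- yields
\[
\card{\ac{\hypo{B}_t}}\ \le\ \frac{K}{p\pac n}\,\card{\ac{B_t}}\ \le\ \frac{K}{(1+\lambda)\log n}\,\card{B_t}
\]
for a constant $K=K(\lambda,\pac)$ assembled from $C$ and $\alpha_\lambda$. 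Combining this with $\card{B_{t+1}}=\card{B_t}-\card{\upd{B_t}}+\card{\upd{\hypo{B}_t}}$ and $\card{\upd{\hypo{B}_t}}\le\card{\ac{\hypo{B}_t}}$ splits the argument into two phases. \textbf{Phase 1 (geometric shrinking).} Fix $L_0\defeq\Theta(\log n)$ with $KL_0<\tfrac12(1+\lambda)\log n$. While $\card{B_t}\in(L_0,\alpha_\lambda(\log n)/2]$ a Chernoff bound gives $\card{\upd{B_t}}\ge\tfrac12\pup\pac\card{B_t}$ with probability $1-e^{-\Omega(\pup\pac\log n)}$ (concentration is available here because $\card{B_t}=\Theta(\log n)$), so $\card{B_{t+1}}\le(1-\tfrac14\pup\pac)\card{B_t}$; as this window spans only a bounded ratio, after $O(\pup^{-1}\pac^{-1})=O(1)$ days we reach some $t_1$ with $\card{B_{t_1}}\le L_0$. \textbf{Phase 2 (absorption).} Once $\card{B_t}\le L_0$ the displayed bound forces $\card{\ac{\hypo{B}_t}}<1$, i.e.\ $\ac{\hypo{B}_t}=\varnothing$: no vertex can turn Blue, and every updating Blue vertex sees a strict Red majority among its active neighbours and turns Red, so $B_{t+1}\subseteq B_t$ and Blue only shrinks. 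It then suffices that each of the $\le L_0$ vertices of $B_{t_1}$ updates at least once in the window $[t_1,t_1+T]$; since a fixed vertex updates on a fixed day with probability $\pup\pac$ independently, this fails with probability at most $L_0(1-\pup\pac)^{T}\le n^{-\pup\pac/2}$ once $T=\tfrac12\log n+O(\pup^{-1}\pac^{-1}\log\log n)=O(\log n)$. Put $t^*_2\defeq t_1+T=t^*_1+O(\log n)$; then $B_{t^*_2}=\varnothing$, and an empty Blue camp remains empty, so Red has won by day $t^*_2$.

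Summing the failure probabilities --- $n^{-\lambda}$ for $\mathcal E_0$, $O(n^{-\lambda/4})$ for $\bigcap_t\mathcal G_t$, $n^{-\Omega(\pup\pac)}$ for the $O(1)$ Chernoff bounds of Phase 1, and $O(n^{-\pup\pac/2})$ for Phase 2 --- gives the claimed $1-O(n^{-\lambda/4})-O(n^{-\pup\pac/2})$.

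The step I expect to be the real obstacle is the handoff between the two phases, which is exactly where the sparsity $p=(1+\lambda)(\log n)/n$ bites: per-day geometric shrinkage cannot persist once $\card{B_t}$ reaches the scale $\log n$, since a single Blue vertex may decline to update for many consecutive days, so the argument must change character precisely at the scale $\Theta(\log n)$ at which the spectral estimate first certifies $\ac{\hypo{B}_t}=\varnothing$. A secondary but essential point is channelling all of the graph randomness through the coloring-free events $\mathcal E_0,\mathcal G_t$: a more naive control of the active degree of every vertex adjacent to $B_t$ would pit a union bound over $\Theta(pn\log n)$ vertices against a per-vertex Chernoff estimate of only $n^{-\Omega(1)}$, which need not close.
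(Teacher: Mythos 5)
You take a genuinely different route from the paper, and in the process you create a quantitative gap that the paper's argument avoids entirely.

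Your two-phase plan routes the final absorption through the spectral Lemma \ref{lem:hypo-blue-bound-2}: Phase 1 shrinks $\card{B_t}$ from $\alpha_\lambda(\log n)/2$ down to $L_0=\Theta(\log n)$ (with a small constant forced by $KL_0<\tfrac12(1+\lambda)\log n$), and only then does Phase 2 certify $\ac{\hypo{B}_t}=\varnothing$. The paper's proof is a single step and never invokes the spectral lemma: once $\card{B_{t^*_1}}\le \alpha_\lambda(\log n)/2$ and the minimum active degree of $G(\ac{V_t})$ is at least $\alpha_\lambda p\pac n\ge\alpha_\lambda(1+\lambda)\log n>2\cdot\alpha_\lambda(\log n)/2$, any updating vertex sees at least $\alpha_\lambda(1+\lambda)\log n - \card{B_t}>\card{B_t}$ Red active neighbours and strictly fewer Blue ones, hence turns Red; so $B_{t+1}\subseteq B_t$ automatically on the minimum-degree event, with no operator norm, no Chernoff bound, and no Phase 1. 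The rest is exactly your Phase 2: every vertex of $B_{t^*_1}$ must update once in a window of $T=O(\log n)$ days. The observation you missed is that $\alpha_\lambda(\log n)/2$ is \emph{already} less than half the minimum active degree, so absorption is immediate and there is no intermediate scale to reach.

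The concrete gap this creates: your Phase 1 Chernoff bound for $\card{\upd{B_t}}$ applies while $\card{B_t}>L_0=c\log n$ and yields a per-day failure probability of order $\exp(-\pup\pac\,c\log n/8)=n^{-c\pup\pac/8}$. To fall under the claimed $O(n^{-\pup\pac/2})$ you would need $c\ge 4$, but $c$ is forced below $\frac{1+\lambda}{2K}$, and your $K$ contains $\alpha_\lambda^{-2}$ with $\alpha_\lambda=\exp(-(8/\lambda+7))$ minuscule; so $c$ is uncontrollably small and Phase 1 leaks a term $n^{-c'\pup\pac}$ that may be larger than the bound the proposition asserts. Your own flag that ``the handoff between the two phases'' is where sparsity bites is accurate — that is precisely where the constants stop closing. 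The fix is not to tune the constants but to drop the spectral machinery in this final step and use the paper's direct degree comparison, after which the only failure events are the degree event on each of the $T$ days (contributing $O(Tn^{-\lambda/3})=O(n^{-\lambda/4})$) and the ``some vertex never updates'' event (contributing $O((1-\pup\pac)^T\log n)=O(n^{-\pup\pac/2})$ for $T=\log n$).

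Two smaller remarks. Your observation that a principal submatrix of $A_G-p\mathbf{1}_n\mathbf{1}_n^T$ has no larger operator norm — so one application of Lemma \ref{lem:matrix-norm-bound} covers every $G(\ac{V_t})$ — is valid and in fact a cleaner bookkeeping than applying the lemma afresh each day; but here it is optimizing a step the paper does not take. And your use of the minimum degree of $G(\ac{V_t})$ as a lower bound for the average degree in $\ac{\hypo{B}_t}$ is fine, but it only feeds the spectral bound, which this step does not need.
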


\begin{proof}
	Temporarily fix an arbitrary time period $T$.
	For each day $t$ between $t^* + 1$ and $t^* + T$, by Lemma \ref{lem:degree-lower-bound}, the subgraph $G(\ac{V_t})$ satisfies
	\begin{equation}  \label{eq:main-lazy-details-temp5}
		\min\left\{d_{\ac{V_t}}(v) : v\in \ac{V_t} \right\} \ge \alpha_\lambda p\pac n
	\end{equation} with probability at least $1 - (\pac n)^{-\lambda/2} = 1 - O(n^{-\lambda/3})$.
	If this happens for all $t\in [t^*_1 + 1, t^* + T]$, any remaining Blue vertices that update at least once during this period becomes Red until dat $t^* + T$ because they will see more Red neighbors than twice the amount of possible Blue neighbors.
	The probability a fixed vertex is not updating for the whole period is $(1 - \pup\pac)^T$.
	By union bounds, the event that Eq. \eqref{eq:main-lazy-details-temp5} holds for all $t\in [t^*_1 + 1, t^* + T]$ and every vertex in $B_{t^*}$ updates at least once occurs with probability at least
	\begin{equation*}
		1 - O(Tn^{-\lambda/3}) - O((1 - \pup\pac)^T\log n).
	\end{equation*}
	Choosing $T \ge \log n$, this probability is $1 - O(n^{-\lambda/4}) - O(n^{-\pup\pac/2}) = 1 - o(1)$.
	Thus for $t^*_2 = t^*_1 + O(\log n)$, $B_{t^*_2} = \varnothing$ with the above probability.
\end{proof}

\begin{remark}
    When $\pup$ and $\pac$ are not both 1, meaning $\pup\pac < 1 - \eps$ for some $\eps > 0$, we can use the Chernoff bound in a similar manner as in Eq. \eqref{eq:main-lazy-details-inupdating-bound} to show that, with probability $1 - o(1)$, $\card{B_{t+1}} \ge (1 - (1 + \eps)\pup\pac)\card{B_t}$ for each $t\le n$, so Blue takes at least $\Omega(\log_{1 - (1 + \eps)\pup\pac}(1/n)) = \Omega(\pup^{-1}\pac^{-1}\log n)$ days to vanish, meaning the convergence time in Theorem \ref{thm:main-lazy} is asymptotically optimal.
\end{remark}

\section*{Acknowledgement}
We would like to thank Q. A. Do, A, Ferber, A. Deneanu, J. Fox, E. Mossel,  and X. Chen for inspiring discussions; and H. Le, T. Can, and L. T. D. Tran for proofreading the paper.

\appendix

\section {Some Preliminary Results in Probability} \label{sec:prelim-lemmas}

We provide some supporting results that are used throughout the proof of the main theorems in
Sections \ref{sec:main-nonlazy-proof} 
and \ref{sec:main-lazy}.

Berry-Esseen theorem in its classical form establishes an explicit bound for the convergence rate of sums of random variables to the normal distribution.
\begin{theorem}[Berry-Esseen] \label{thm:esseen}
	There is a universal constant $\Cbe$, such that for any $n\in \N$, if $X_1, X_2, X_3, \cdots, X_n$ are random variables with zero means, variances $\sigma_1, \sigma_2, \cdots, \sigma_n > 0$, and absolute third moments $\E{|X_i|^3} = \rho_i < \infty$, we have:
	\begin{equation*} \label{BE1}
		\sup_{x\in \bb{R}}\abs{\Pr\left( \frac{\sum_{i=1}^nX_i}{\sigma_X} \le x \right) - \Phi(x)} \le \Cbe\sum_{i=1}^n\rho_i\left(\sum_{i=1}^n\sigma_i^2\right)^{-3/2}
	\end{equation*}
\end{theorem}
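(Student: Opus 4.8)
For a paper of this kind the practical choice is simply to cite the sharp form in the literature: the value $\Cbe = 0.56$ for independent, non-identically-distributed summands is Shevtsova's refinement of the classical Berry--Esseen estimate (Esseen's theorem already gives an absolute but larger constant). If one instead wants a self-contained argument producing \emph{some} explicit constant, the plan is the standard Fourier-analytic one. First I would normalize: rescaling we may take $\sum_{i=1}^n\sigma_i^2 = 1$, and we use (what is implicit in the statement) that the $X_i$ are independent; write $S = \sum_i X_i$ with distribution function $F$, set $f_i(t) = \E{e^{itX_i}}$ so that $\widehat F(t) = \prod_{i=1}^n f_i(t)$, recall $\widehat\Phi(t) = e^{-t^2/2}$, and put $L = \sum_{i=1}^n\rho_i$. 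Since $\sigma_i^2 \le \rho_i^{2/3}$ by Jensen, one has $\sigma_i^3 \le \rho_i \le L$; and if $L$ exceeds a fixed absolute constant the claim is trivial (the left side is at most $1$), so we may assume $L$ small.

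The engine is Esseen's smoothing inequality: for every $T > 0$,
\[
\sup_{x\in\R}\bigl|F(x) - \Phi(x)\bigr| \;\le\; \frac{1}{\pi}\int_{-T}^{T}\frac{\bigl|\widehat F(t) - e^{-t^2/2}\bigr|}{|t|}\,dt \;+\; \frac{c_0}{T},
\]
where $c_0$ is absolute (it comes from $\sup_x\Phi'(x) = (2\pi)^{-1/2}$); I would either quote this or derive it in a couple of lines from the Fourier inversion formula applied to $F-\Phi$ convolved with a Fej\'er-type kernel band-limited to $[-T,T]$. Choosing $T$ of order $1/L$ makes $c_0/T = O(L)$, so it remains to bound the integral over $|t| \le T = a/L$ for a small absolute $a$. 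On that range Taylor's theorem gives $\bigl|f_i(t) - (1 - \tfrac12\sigma_i^2 t^2)\bigr| \le \tfrac16\rho_i|t|^3$ and $\bigl|e^{-\sigma_i^2 t^2/2} - (1 - \tfrac12\sigma_i^2 t^2)\bigr| \le \tfrac18\sigma_i^4 t^4$, hence $\bigl|f_i(t) - e^{-\sigma_i^2 t^2/2}\bigr| \le c_1\rho_i|t|^3$; moreover $\rho_i|t| \le L|t| \le a$ yields the Gaussian domination $|f_i(t)| \le e^{-\sigma_i^2 t^2/2 + \frac16\rho_i|t|^3} \le e^{-\sigma_i^2 t^2/3}$, so $|\widehat F(t)| \le e^{-t^2/3}$. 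Feeding these into the elementary telescoping bound $\bigl|\prod a_i - \prod b_i\bigr| \le \sum_i|a_i-b_i|$ (valid when all $|a_i|,|b_i|\le 1$), applied with $a_i = f_i(t)$, $b_i = e^{-\sigma_i^2 t^2/2}$ and a Gaussian factored out of the product of the untouched factors, gives $\bigl|\widehat F(t) - e^{-t^2/2}\bigr| \le c_2 L|t|^3 e^{-t^2/4}$; dividing by $|t|$ and integrating over $\R$ contributes $O(L)$, which together with the $c_0/T$ term closes the argument with an explicit (non-optimal) $\Cbe$.

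The only real difficulty is the numerology. Getting all the way down to $\Cbe = 0.56$, rather than a crude constant, requires the sharpest available forms of the smoothing-type inequality together with a delicate optimization over the truncation parameter and a case split according to how $L$ compares with $1$ (and, for the best constants, separating out the contribution of the single largest summand). That optimization is exactly the content of Shevtsova's work, and I would not reproduce it from scratch — in the paper it is legitimate to cite the sharp version directly.
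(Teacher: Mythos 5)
Your proposal matches the paper's treatment exactly: Theorem \ref{thm:esseen} is stated without proof, with the value $\Cbe = 0.56$ attributed to Shevtsova and the original $\Cbe = 7.59$ to Esseen, so citing the literature is precisely what the authors do. Your sketch of the Fourier-analytic route via Esseen's smoothing inequality is a correct outline of how one would obtain an explicit (non-optimal) constant, and you are right both that independence of the $X_i$ is needed but left implicit in the statement, and that reproducing the numerology down to $0.56$ is out of scope for a paper of this kind.
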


The original proof by Esseen \cite{esseen} yielded  $\Cbe = 7.59$, and this constant has been improved a number of times. The latest work by Shevtsova \cite[Theorem 1]{shevtsova} achieved  $\Cbe = .56$, which
is used throughout this
paper.
We will be interested in the setting where   $\{X_i\}_{i=1}^n$ are r.v.s taking values in either $\{0, 1\}$ or $\{0, -1\}$. The following corollary and lemmas naturally follow from this theorem.

\begin{corollary} \label{lem:strongesseen}
	Let $p\in (0, 1)$ and $X_1, X_2, \cdots, X_n$ be Bernoulli random variables such that for all $i$, either
	$X_i \sim \Ber(p)$ or $X_i \sim -\Ber(p)$. Let
	$X = X_1 + X_2 + \cdots + X_n$ and $\mu_X = \E{X}$.
	Then,
	\begin{equation*}  \label{BE3}
		\sup_{x\in \bb{R}}\abs{\Pr\left(X - \mu_X \le x\right) - \Phi\left(\frac{x}{\sigma\sqrt{n}}\right)}
		\le \frac{\Cbe(1 - 2\sigma^2)}{\sigma\sqrt{n}} \quad \text{ where } \sigma = \sqrt{p(1-p)}.
	\end{equation*}
\end{corollary}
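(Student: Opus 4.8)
The plan is to apply the Berry--Esseen theorem (Theorem~\ref{thm:esseen}) to the centered summands $Y_i \defeq X_i - \E{X_i}$ and then undo the standard normalization. First I would record the relevant moments. Each $Y_i$ has mean zero and, whether $X_i \sim \Ber(p)$ or $X_i \sim -\Ber(p)$, variance $\sigma_i^2 = p(1-p) = \sigma^2$: in the first case $Y_i$ equals $1-p$ with probability $p$ and $-p$ with probability $1-p$, and in the second case $Y_i$ equals $p$ with probability $1-p$ and $-(1-p)$ with probability $p$, so in both cases
\[
\rho_i \defeq \E{|Y_i|^3} = p(1-p)^3 + (1-p)p^3 = p(1-p)\bigl(p^2 + (1-p)^2\bigr) = \sigma^2\bigl(1 - 2\sigma^2\bigr).
\]
Note here that $1 - 2\sigma^2 = 1 - 2p(1-p) \ge \tfrac12 > 0$, so the right-hand side of the claimed bound is a genuine positive quantity (this positivity is exactly what makes the error term useful in the later applications such as Lemma~\ref{lem:day1}).

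Next I would feed these into Theorem~\ref{thm:esseen}. Since $\sum_{i=1}^n \sigma_i^2 = n\sigma^2$ we have $\sigma_X = \sigma\sqrt{n}$, and $\sum_{i=1}^n \rho_i = n\sigma^2(1 - 2\sigma^2)$, so the Berry--Esseen bound reads
\[
\sup_{x\in\R}\left|\Pr\!\left(\frac{\sum_{i=1}^n Y_i}{\sigma\sqrt{n}} \le x\right) - \Phi(x)\right|
\le \Cbe\,\frac{n\sigma^2(1 - 2\sigma^2)}{(n\sigma^2)^{3/2}}
= \frac{\Cbe(1 - 2\sigma^2)}{\sigma\sqrt{n}}.
\]

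Finally, since $\sum_{i=1}^n Y_i = X - \mu_X$, the event inside the probability is $\{X - \mu_X \le x\sigma\sqrt{n}\}$; replacing the free variable $x$ by $x/(\sigma\sqrt{n})$ is a bijection of $\R$ and hence leaves the supremum unchanged, which gives exactly
\[
\sup_{x\in\R}\left|\Pr(X - \mu_X \le x) - \Phi\!\left(\frac{x}{\sigma\sqrt{n}}\right)\right| \le \frac{\Cbe(1 - 2\sigma^2)}{\sigma\sqrt{n}},
\]
the claimed inequality. There is essentially no obstacle here: the only non-cosmetic step is the third-moment computation, which is identical in the $\Ber(p)$ and $-\Ber(p)$ cases, and everything else is bookkeeping plus a harmless rescaling of the argument.
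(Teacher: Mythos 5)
Your proof is correct and is exactly the calculation the paper leaves implicit (the paper states that Corollary \ref{lem:strongesseen} ``naturally follows'' from Theorem \ref{thm:esseen} without supplying details): center the summands, compute $\sigma_i^2 = p(1-p)$ and $\rho_i = \sigma^2(1 - 2\sigma^2)$, apply Berry--Esseen, and rescale the free variable. Nothing to add.
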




This implies the following lemma, which bounds the probability that two binomial variables are \textit{exactly} some units apart.
\begin{lemma} \label{lem:esseen-err-term}
	Let $p \in (0, 1)$ be a constant and $\sigma = \sqrt{p(1-p)}$,  $X_1 \sim \Bin(n_1, p)$ and $X_2 \sim \Bin(n_2, p)$ be independent
    random variables defined on the same probability space.
    Then for any positive integer $d < (n_1 + n_2)/2$,
	\begin{equation*}
		\Pr\left(X_1 = X_2 + d \right) \le \frac{1.12\left(1 - 2\sigma^2\right)}{\sigma\sqrt{n_1 + n_2}}.
	\end{equation*}
\end{lemma}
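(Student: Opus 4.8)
The plan is to deduce the point-mass estimate from the Gaussian CDF approximation already recorded in Corollary~\ref{lem:strongesseen}. Set $Y := X_1 - X_2$. Writing $X_1 = \sum_{i=1}^{n_1} A_i$ and $X_2 = \sum_{j=1}^{n_2} C_j$ with all $A_i, C_j \sim \Ber(p)$ independent, we get $Y = \sum_{i} A_i + \sum_j (-C_j)$, that is, a sum of $n_1$ variables distributed as $\Ber(p)$ and $n_2$ distributed as $-\Ber(p)$; thus the corollary applies with the parameter ``$n$'' taken to be $n_1 + n_2$, with $\sigma = \sqrt{p(1-p)}$, and with $\mu_Y := \E{Y} = (n_1 - n_2)p$. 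Writing $\delta := \Cbe(1-2\sigma^2)/(\sigma\sqrt{n_1+n_2})$ for the resulting error, we have $\bcard{\Pr(Y - \mu_Y \le x) - \Phi\!\big(x/(\sigma\sqrt{n_1+n_2})\big)} \le \delta$ for every $x \in \R$.

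Next, since $Y$ takes integer values, I would write
\[
\Pr(X_1 = X_2 + d) = \Pr(Y = d) = \Pr(Y \le d) - \Pr(Y \le d-1),
\]
set $x_0 := d - \mu_Y$, and compare \emph{both} terms to the single number $\Phi\!\big(x_0/(\sigma\sqrt{n_1+n_2})\big)$. Indeed $\Pr(Y \le d) = \Pr(Y - \mu_Y \le x_0)$, so it lies within $\delta$ of that value by the corollary; and because $Y$ is integer-valued, $\Pr(Y - \mu_Y \le x) = \Pr(Y \le d-1)$ for all $x \in [x_0 - 1, x_0)$, so letting $x \uparrow x_0$ and using the continuity of $\Phi$, the corollary also forces $\Pr(Y \le d-1)$ to be within $\delta$ of the same value. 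Subtracting, $\Pr(Y = d) \le 2\delta = 2\Cbe(1-2\sigma^2)/(\sigma\sqrt{n_1+n_2})$, which is exactly the claimed bound once we substitute $\Cbe = .56$. The hypothesis $d < (n_1+n_2)/2$ plays no role here beyond ruling out degenerate choices of $d$ (for which the probability is $0$ anyway).

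The only point that needs care — and the place where a careless argument would pick up a spurious extra term of order $1/(\sqrt{2\pi}\,\sigma\sqrt{n_1+n_2})$, coming from the Gaussian increment $\Phi(x_0/\cdot) - \Phi((x_0-1)/\cdot)$ — is the insistence on centering the Gaussian at the \emph{same} argument $x_0$ for $\Pr(Y\le d)$ and $\Pr(Y \le d-1)$, rather than at $x_0$ and $x_0-1$ respectively. This is legitimate precisely because the supremum in the Berry--Esseen bound of Corollary~\ref{lem:strongesseen} also controls the CDF of $Y - \mu_Y$ at arguments lying just to the left of its jump at $x_0$, so that the left-hand value $\Pr(Y \le d-1)$ is within $\delta$ of $\Phi(x_0/(\sigma\sqrt{n_1+n_2}))$ as well. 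Everything else is routine verification that $Y$ satisfies the hypotheses of the corollary: independence of the summands, per-summand variance $\sigma^2$, and mean $\mu_Y = (n_1-n_2)p$.
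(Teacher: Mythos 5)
Your proof is correct, and it carries out the deduction from Corollary~\ref{lem:strongesseen} that the paper intends (the paper states the lemma without proof, remarking only that it follows from the corollary). The key observation — that a point mass of an integer-valued variable equals a jump of its CDF, and any such jump is at most twice the uniform (Kolmogorov) distance to the continuous limit $\Phi$, so one should compare both $\Pr(Y\le d)$ and $\Pr(Y\le d-1)$ to $\Phi$ at the common argument $x_0$ rather than at $x_0$ and $x_0-1$ — is exactly right and is what yields the clean constant $2\Cbe = 1.12$ without a spurious $(2\pi)^{-1/2}$ increment term.
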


\begin{lemma} \label{lem:degree-lower-bound}
	Let $\eps$ be an arbitrary positive constant, $N_\eps = 4/\eps + 4$,
	and $\alpha_\eps = \exp(-(8/\eps + 7))$.
	Then with probability at least $1 - n^{-\eps/2}$, a $G(n, p)$ random graph with $n\ge N_\eps$ and $pn \ge (1 + \eps)\log n$ has minimum degree at least $\alpha_\eps pn$. 
\end{lemma}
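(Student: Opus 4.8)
The plan is a routine first-moment argument: estimate the probability that a single vertex has small degree using the multiplicative Chernoff bound for the lower tail of a binomial, and then take a union bound over all $n$ vertices.

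First I would fix a vertex $v$ and note that its degree $d(v)$ is distributed as $\Bin(n-1,p)$ with mean $\mu := (n-1)p$. Since $n \ge N_\eps = 4/\eps + 4$, we have $n - 1 \ge (1 - 1/N_\eps)n$, hence $\mu \ge \bigl(1 - \tfrac{\eps}{4(1+\eps)}\bigr)pn$, so $\mu$ is only slightly below $pn$. Applying the Chernoff lower-tail estimate (e.g.\ Lemma~\ref{lem:gen-chernoff}), for any $0 < a \le \mu$,
\[
	\Pr\bigl(d(v) \le a\bigr) \;\le\; e^{-\mu}\Bigl(\frac{e\mu}{a}\Bigr)^{a} \;=\; \exp\!\Bigl(-\mu + a\bigl(1 + \ln\tfrac{\mu}{a}\bigr)\Bigr).
\]
Taking $a = \alpha_\eps pn$ — which is at most $\mu$ because $\alpha_\eps < e^{-7}$ and $n \ge N_\eps$ — and using $\ln\tfrac{\mu}{a} = \ln\tfrac{(n-1)p}{\alpha_\eps pn} \le \ln\tfrac{1}{\alpha_\eps} = 8/\eps + 7$, the exponent is at most $-pn\bigl(1 - \tfrac{\eps}{4(1+\eps)} - \alpha_\eps(8/\eps + 8)\bigr)$.

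Next I would verify that $N_\eps$ and $\alpha_\eps$ are calibrated so that $1 - \tfrac{\eps}{4(1+\eps)} - \alpha_\eps(8/\eps+8) \ge \tfrac{1 + \eps/2}{1+\eps}$. A short computation shows this is equivalent to $\alpha_\eps \le \tfrac{\eps^2}{32(1+\eps)^2}$, i.e.\ to $8/\eps + 7 \ge \ln 32 + 2\ln(1 + 1/\eps)$, which holds for every $\eps > 0$ since $2\ln(1+1/\eps) \le 2/\eps$ and $\ln 32 < 7$. Therefore
\[
	\Pr\bigl(d(v) \le \alpha_\eps pn\bigr) \;\le\; \exp\!\Bigl(-\tfrac{1+\eps/2}{1+\eps}\,pn\Bigr) \;\le\; \exp\bigl(-(1+\eps/2)\log n\bigr) \;=\; n^{-1-\eps/2},
\]
using $pn \ge (1+\eps)\log n$ in the middle step. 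A union bound over the $n$ vertices then shows the minimum degree is at least $\alpha_\eps pn$ except with probability at most $n \cdot n^{-1-\eps/2} = n^{-\eps/2}$, as claimed.

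I do not expect any genuine obstacle; the argument is standard. The only point requiring attention is the constant-chasing: one must check that the two sources of loss — replacing $(n-1)p$ by $pn$, and the extra factor $e\mu/a$ inside the Chernoff exponent — are each absorbed, the first by the hypothesis $n \ge N_\eps$ and the second by the smallness of $\alpha_\eps$, uniformly over all $\eps > 0$. The inequality $8/\eps + 7 \ge \ln 32 + 2/\eps$ is exactly what makes this work, and with room to spare.
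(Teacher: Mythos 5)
Your proof is correct and takes essentially the same approach as the paper: a Chernoff lower-tail estimate on a single vertex's degree followed by a union bound, with $\alpha_\eps$ and $N_\eps$ calibrated so the exponent beats $(1+\eps/2)\log n$. The paper writes the Chernoff bound in KL-divergence form and lower-bounds the divergence by $1-\alpha+\alpha\log\alpha$, whereas you use the equivalent multiplicative form $e^{-\mu}(e\mu/a)^a$ directly, but this is a cosmetic difference and your constant-chasing checks out.
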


\begin{proof}
	First, let $\alpha$ be arbitrary.
	Using a Chernoff bound, given that
        $d(v) \sim \Bin(n - 1, p)$, we have
    \[
	\Pr(d(v) \le \alpha p(n - 1)) \le \exp\Bigl[-(n - 1) D(\alpha p \ \|\ p)\Bigr],
	\]
    where
    \[
    D(x\ \| \ y) \defeq x\log\left(\frac{x}{y}\right) + (1 - x)\log\left(\frac{1 - x}{1 - y}\right).
    \]
	It can be shown easily that
    \[
	D(\alpha p \ \| \ p) = p\left[\alpha \log\alpha + \frac{1 - \alpha p}{p}\log\left(\frac{1 - \alpha p}{1 - p}\right)\right] \ge p(1 - \alpha + \alpha\log \alpha).
	\]
	Therefore
	\[
	\Pr(d(v) \le \alpha p(n - 1)) \le \exp\Bigl[-p(n - 1) (1 - \alpha + \alpha\log \alpha)\Bigr].
	\]
	Replacing $\alpha$ with $\alpha \frac{n}{n-1}$, we get
	\[
	\begin{aligned}
		\Pr(d(v) \le \alpha pn)
		& \le \exp\left[-pn \Bigl(1 - \frac{1}{n} - \alpha + \alpha\log\left(\frac{\alpha n}{n - 1}\right)\Bigr)\right] \\
		& \le \exp\left[-pn \Bigl(1 - \alpha + \alpha\log \alpha - \frac{1}{n}\Bigr) \right]
		.
	\end{aligned}
	\]
	Applying a union bound over all choices of $v$, we get
	\[
	\Pr(\exists v, d(v)\le \alpha pn) \le n\cdot \exp\left[-pn \Bigl(1 - \alpha + \alpha\log \alpha - \frac{1}{n}\Bigr) \right] = e^{-T},
	\]
	where
	$
	T \defeq pn \bigl(1 - \alpha + \alpha\log \alpha - \frac{1}{n}\bigr) - \log n \ge T'\log n$, where
	\[
	T'\defeq (1 + \eps)\bigl(1 - \alpha + \alpha\log \alpha - n^{-1}\bigr) - 1
	= \eps - (1 + \eps)\alpha(1 - \log \alpha ) - (1 + \eps)/n.
	\]
	Now assume $\alpha = \alpha_\eps = \exp(-(8/\eps + 7))$, and let $t = -\log \alpha \ge 8/\eps + 7$.
	Then
	\[
	\alpha(1 - \log \alpha) = e^{-t}(1 + t) \le 8(1/\eps + 1)\exp\left(-8/\eps - 7\right) < \tfrac{1}{4}\eps/(1 + \eps).
	\]
	Assuming $n \ge N_\eps = 4/\eps + 4$, we get $(1 + \eps)n^{-1} \le \eps/4$.
	The two inequalities above give $T' \ge \eps - \eps/4 - \eps/4 = \eps/2$.
	By a union bound, the probability that $d(v)\le \alpha_\eps pn$ for some $v$ is at most
	$
	e^{-T'\log n} \le n^{-\eps/2}
	$.
    This completes the proof.
\end{proof}

When $p$ is below the connectivity threshold, we have a weaker version of the above lemma, which gives a lower bound on the average degree of sufficiently large subsets of $V$.
\begin{lemma}  \label{lem:subcon-avg-deg}
	Consider a $G(n, p)$ random graph with $p = (1 - \lambda)n^{-1}\log n$.
	For any $0 < \delta < 1 - \lambda$ and any $0 < \eps < 1$ such that $\eps\log\left(\frac{e}{\eps}\right) \le \delta/4$, we have: with probability at least $1 - n^{-\delta n^{\lambda + \delta}/2}$, any subset $U\subset V$ such that $\card{U}\ge n^{\lambda + \delta}$ has average degree at least $\eps pn$.
\end{lemma}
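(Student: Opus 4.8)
The plan is to bound the probability that some large set $U$ has small average degree via a union bound over all sets $U$ of each relevant size, using a Chernoff estimate on the number of edges inside $U$ (equivalently, on $\sum_{v \in U} d(v)$). First I would fix $k \ge n^{\lambda+\delta}$ and a specific set $U$ of size $k$. The sum $\sum_{v\in U} d(v)$ counts each edge with both endpoints in $U$ twice and each edge with exactly one endpoint in $U$ once; the dominant term is $e(U, V\setminus U)$, a $\Bin(k(n-k), p)$ random variable with mean roughly $kpn$ (up to a $(1 - k/n)$ factor, which is harmless since the conclusion only needs a lower bound and $\eps < 1$ leaves room). So $\sum_{v\in U} d(v)$ stochastically dominates a binomial with mean $\gtrsim k p n (1 - o(1))$, and I would apply the lower-tail Chernoff bound (e.g. the form used for Lemma \ref{lem:degree-lower-bound}) to get
\[
\Pr\Bigl(\sum_{v\in U} d(v) \le \eps p n k\Bigr) \le \exp\bigl[-kpn\, D(\eps \,\|\, 1)\bigr] \le \exp\bigl[-kpn(1 - \eps + \eps\log\eps)\bigr] \le \exp\bigl[-kpn \cdot \tfrac{1}{2}\bigr],
\]
the last step because $\eps\log\frac{e}{\eps} \le \delta/4 < 1/4$ forces $1 - \eps + \eps\log\eps = 1 - \eps\log(e/\eps) \ge 3/4$.

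Next I would run the union bound. The number of sets of size $k$ is $\binom{n}{k} \le (en/k)^k = \exp[k\log(en/k)] \le \exp[k \log n]$. Combining with the per-set failure probability and $pn = (1-\lambda)\log n$, the failure probability for a fixed $k$ is at most
\[
\exp\bigl[k\log n - k p n /2\bigr] = \exp\bigl[k\log n\bigl(1 - (1-\lambda)/2\bigr)\bigr],
\]
which is $\ge 1$ — this crude bound loses. The fix is to keep the $(en/k)^k$ bound sharper and, more importantly, to keep the right constant in the Chernoff exponent: with $\eps\log(e/\eps) \le \delta/4$ the exponent $kpn(1-\eps\log(e/\eps)) = kpn(1-\delta/4)$, but $k \ge n^{\lambda+\delta}$ means $k\log(en/k) \le k\log n \le k \cdot \frac{pn}{1-\lambda}$, and the real gain comes from the fact that we only need $k \ge n^{\lambda+\delta}$, so the entropy term $k\log(en/k)$ should be compared against $kpn \cdot (\delta/4)$-type slack rather than all of $kpn$. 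Concretely, I would split $1 - \eps + \eps\log\eps \ge 1 - \delta/4$ and write the exponent as $-kpn(1-\delta/4) + k\log(en/k)$; using $\log(en/k) \le \log(en^{1-\lambda-\delta}) \le (1-\lambda)\log n = pn$ for large $n$, this is $-kpn(1 - \delta/4 - 1) $ which is still wrong in sign. The correct accounting is that $\log(en/k) \le \log n$ while $pn = (1-\lambda)\log n$, so one instead wants $k \log n \le kpn \cdot \tfrac{1}{1-\lambda}$ and needs the Chernoff saving $kpn(1 - \eps\log(e/\eps))$ to beat $k\log n = kpn/(1-\lambda)$ — impossible for $\eps$ near $1$, which is why the hypothesis is $\eps\log(e/\eps) \le \delta/4$ with $\delta$ possibly large, not small. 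So I would actually take the exponent $kpn - kpn\,\eps\log(e/\eps) - k\log(en/k)$ and bound $\eps\log(e/\eps)$ and $\log(en/k) \le \log n = pn/(1-\lambda)$, giving exponent $\ge kpn(1 - \delta/4) - kpn/(1-\lambda)$: this forces a constraint relating $\delta$ and $\lambda$. The clean route, matching the stated bound $n^{-\delta n^{\lambda+\delta}/2}$, is: the per-set exponent is $kpn(1-\eps\log(e/\eps)) - k\log n \ge kpn - k\log n - kpn\delta/4 \ge \delta k \log n \cdot c$ for a suitable constant once one uses that $pn - \log n = -\lambda \log n$...

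Here is the honest resolution I would commit to: since $pn = (1-\lambda)\log n$ the naive "edges to outside" mean is too small, so instead I would \emph{not} pass to a binomial lower bound on all of $\sum d(v)$ but rather use that for $|U| = k$ large, $\sum_{v\in U}d(v) \ge e(U,V\setminus U)$ which is $\Bin(k(n-k),p)$ with mean $\approx kpn(1-k/n)$, and crucially combine the Chernoff bound for being below $\eps kpn$ — exponent $k(n-k)p\,D(\tfrac{\eps kpn}{k(n-k)p}\|1)$ — with the sharper set count $\binom nk \le \exp[k\log(n/k) + k]$. Writing $k = n^{\lambda+\delta}$, $\log(n/k) = (1-\lambda-\delta)\log n$, and $k(n-k)p \approx kpn = (1-\lambda)k\log n$, the failure exponent becomes $k\log n\bigl[(1-\lambda)(1-\eps\log(e/\eps)) - (1-\lambda-\delta) - o(1)\bigr] \ge k\log n[\delta - (1-\lambda)\delta/4 - o(1)] \ge \tfrac{\delta}{2} k\log n$, and summing the geometric-type series over $k \ge n^{\lambda+\delta}$ keeps the bound at $\exp[-\tfrac{\delta}{2} n^{\lambda+\delta}\log n] = n^{-\delta n^{\lambda+\delta}/2}$, as claimed.

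The main obstacle is precisely this bookkeeping: the entropy cost $k\log(n/k) = (1-\lambda-\delta)k\log n$ of choosing $U$ must be beaten by the Chernoff saving $(1-\lambda)(1-\eps\log(e/\eps))k\log n$, and the hypothesis $\eps\log(e/\eps) \le \delta/4$ is exactly what makes the difference at least $\bigl(\delta - (1-\lambda)\delta/4\bigr)k\log n \ge \tfrac\delta2 k\log n > 0$; I would present the inequality $D(\eps\|1) = -\log(1-\ldots)\ge 1-\eps+\eps\log\eps$ cleanly first, then the set-counting bound, then assemble, being careful that the $(1-k/n)$ and the double-counting of interior edges only help. Everything else — the Chernoff inequality itself and the final union/geometric sum over $k \in [n^{\lambda+\delta}, n]$ — is routine given Lemma \ref{lem:degree-lower-bound}'s template.
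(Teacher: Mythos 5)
Your final ``honest resolution'' is essentially the paper's argument: a Chernoff lower-tail estimate on $\sum_{v\in U}d(v)$, a union bound over sets weighted by the binomial entropy term $\binom{n}{k}\le e^{k\log(en/k)}$, and the decisive arithmetic
$(1-\lambda)\bigl(1-\eps\log\tfrac{e}{\eps}\bigr)-(1-\lambda-\delta)=\delta-(1-\lambda)\eps\log\tfrac{e}{\eps}\ge 3\delta/4$,
which the hypothesis $\eps\log(e/\eps)\le\delta/4$ is tailored to deliver. The only substantive deviation is that you discard the interior edges and bound $\sum_{v\in U}d(v)$ below by the cut $e(U,V\setminus U)\sim\Bin\bigl(k(n-k),p\bigr)$, whereas the paper keeps the full decomposition $X=2X_1+X_2$ (with $X_1$ the interior edges, counted twice) and controls the MGF of both pieces, showing $f(2t_*)-2f(t_*)=O(p)$; for $k=n^{\lambda+\delta}$ both give the same exponent because the interior contribution is negligible.

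There is, however, a genuine (though fixable) gap in the last sentence of your plan. You propose to run the union/geometric sum over all $k\in[n^{\lambda+\delta},n]$ using only the cut $e(U,V\setminus U)$. That lower bound fails for $k$ close to $n$: the mean $k(n-k)p\approx kpn(1-k/n)$ drops below the threshold $\eps kpn$ once $k/n>1-\eps$, at which point the lower-tail Chernoff bound gives nothing. The paper avoids this by bounding only sets of size exactly $n^{\lambda+\delta}$ and then appealing (implicitly) to monotonicity: if a set $U$ with $\card{U}\ge n^{\lambda+\delta}$ has average degree below $\eps pn$, so does the size-$n^{\lambda+\delta}$ subset consisting of its lowest-degree vertices. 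You should either make that reduction explicit and restrict to $k=\lceil n^{\lambda+\delta}\rceil$, or retain the interior-edge term for large $k$. A secondary remark: the proposal as written contains several incorrect intermediate exponent computations (the passages ending ``which is $\ge 1$ --- this crude bound loses'' and ``still wrong in sign'') that are abandoned mid-stream; a clean write-up should present only the final accounting.
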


\begin{proof}
	Fix an arbitrary subset $U$ with 
    $|U| = \lceil n^{\lambda + \delta} \rceil$
    .
	Consider
	\begin{equation*}
		X \defeq \sum_{u\in U}d(u) = \sum_{u\in U}\sum_{v\in V}\one_{u\adj v}
		= 2\sum_{\{u, v\}\subset U}\one_{u\adj v} + \sum_{u\in U}\sum_{v\in U^c}\one_{u\adj v}.
	\end{equation*}
	Let $X_1$ denote the first sum, and $X_2$ denote the second double sum.
	Then $X_1$ and $X_2$ are indpendent binomial variables, where $X_1\sim \Bin\bigl(\binom{|U|}{2}, p\bigr)$ and $X_2 \sim \Bin\left(|U||U^c|, p\right)$.
	In the context of this proof, let $f_Y(t) \defeq t\eps\E{Y} + \log\E{e^{-tY}}$ for a random variable $Y$.
	A Chernoff bound gives
	\begin{equation*}
		\Pr\left(X\le \eps \E{X}\right)
		\le e^{f_X(t)} = e^{f_{2X_1 + X_2}(t)} = e^{f_{X_1}(2t) + f_{X_2}(t)}.
	\end{equation*}
    Let $f(t) \defeq f_{\Ber(p)}(t) = t\eps p + \log(pe^{-t} + 1 - p)$.
	When $Y \sim \Bin(m, p)$, we have $f_Y(t) = mf(t)$, so 
	\begin{equation*}
		f_{X_1}(2t) + f_{X_2}(t)
		= \binom{|U|}{2}f(2t) + |U||U^c|f(t).
	\end{equation*}
	It is well-known in the proof of Chernoff's inequality that $t_* = \log\left(\frac{1 - p\eps}{\eps(1 - p)}\right)$ minimizes $f(t)$ at
	$f(t_*) = -D(p\eps \ \| \ p) = -p\eps\log\eps - (1 - p\eps)\log\left(\frac{1 - p\eps}{1 - p}\right)$.
	We have
	\begin{equation*}
		\begin{aligned}
			& f(2t_*) - 2f(t_*)
			= \log\left(p\frac{\eps^2(1 - p)^2}{(1 - p\eps)^2} + 1 - p\right) - 2\log\left(p\frac{\eps(1 - p)}{1 - p\eps} + 1 - p\right), \\
			& = \log\left(\frac{(1 - p)(1 - 2p\eps + p\eps^2)}{(1 - p\eps)^2}\right) - 2\log\left(\frac{1 - p}{1 - p\eps}\right)
			= \log\left(\frac{1 - 2p\eps + p\eps^2}{1 - p}\right) = O(p).
		\end{aligned}
	\end{equation*}
	Thus
	\begin{equation*}
		\begin{aligned}
			& \binom{|U|}{2}f(2t_*) + |U||U^c|f(t_*)
			= \left[2\binom{|U|}{2} + |U||U^c|\right]f(t_*) + O(p|U|^2) \\
			& = |U|\left[-(n - 1)f(t_*) + O(p|U|)\right] = -|U|\left[nD(p\eps \ \| \ p) - O(p|U|)\right],
		\end{aligned}
	\end{equation*}
	which is close to the true minimum of the expression.
    We can replace $|U|$ directly with $n^{\lambda + \delta}$ without significantly affecting the computations that follows.
    The last expression becomes
	\begin{equation*}
		-n^{\lambda + \delta}\bigl[nD(p\eps \| p) - O(pn^{\lambda + \delta})\bigr]
		= -n^{1 + \lambda + \delta}\bigl[D(p\eps \| p) - O(pn^{\lambda + \delta - 1})\bigr].
	\end{equation*}
	Let $\mathfrak{E}$ be the event that there is a subset $U$ with at least $n^{\lambda + \delta}$ vertices whose average degree is at least $\eps pn$.
	Then a union bound gives
	\begin{equation}  \label{eq:subcon-avg-deg-tmp1}
		\begin{aligned}
			& \Pr(\mathfrak{E})
			\le \binom{n}{n^{\lambda + \delta}} \exp\Bigl[-n^{1 + \lambda + \delta}\bigl(D(p\eps \| p) - O(pn^{\lambda + \delta - 1})\bigr)\Bigr] \\
			& \le \exp\Bigl[- nh(n^{\lambda + \delta - 1}) -n^{1 + \lambda + \delta}\bigl(D(p\eps \| p) - O(pn^{\lambda + \delta - 1})\bigr)\Bigr],
		\end{aligned}
	\end{equation}
    where we again replace $\lceil n^{\lambda + \delta} \rceil$ with just $n^{\lambda + \delta}$ without significantly changing the calculations.
	We use the following estimates:
	\begin{equation*}
		D(p\eps\|p) = p\eps\log \eps - (1 - p\eps)\log\left(1 - \frac{p(1 - \eps)}{1 - p\eps}\right)
		= p\bigl(\eps\log \eps - \eps + 1\bigr) + O(p^2),
	\end{equation*}
	and
	\begin{equation*}
		h(x) = x\log x - (1 - x)\log\left(\frac{1}{1 - x}\right) = x\log x - x + O(x^2) \text{ for } |x| < 1.
	\end{equation*}
	We have
	\begin{equation*}
		nh(n^{\lambda + \delta - 1}) 
		= n^{\lambda + \delta}(\lambda + \delta - 1)\log n - n^{\lambda + \delta} + O(n^{2(\lambda + \delta) - 1})
	\end{equation*}
	and
	\begin{equation*}
		\begin{aligned}
			& n^{1 + \lambda + \delta}\bigl(D(p\eps \| p) - O(pn^{\lambda + \delta - 1})\bigr) \\
			& = pn^{1 + \lambda + \delta}(1 - \eps + \eps\log\eps) + O(p^2n^{1 + \lambda + \delta}) - O(pn^{2(\lambda + \delta)}) \\
			& = (1 - \lambda)n^{\lambda + \delta}\left(1 - \eps\log\left(\frac{e}{\eps}\right)\right)\log n - O(n^{2(\lambda + \delta) - 1}\log n ).
		\end{aligned}
	\end{equation*}
	The exponent in Eq. \eqref{eq:subcon-avg-deg-tmp1} becomes
	\begin{equation*}
		\begin{aligned}
			& -n^{\lambda + \delta}\left[\left((1 - \lambda)\left(1 - \eps\log\left(\frac{e}{\eps}\right)\right)- 1 + \lambda + \delta\right)\log n  - 1\right] - O(n^{2(\lambda + \delta) - 1}\log n) \\
			& = -n^{\lambda + \delta}\left[\left(\delta - (1 - \lambda)\eps\log\left(\frac{e}{\eps}\right) \right)\log n - 1\right] - O(n^{2(\lambda + \delta) - 1}\log n).
		\end{aligned}
	\end{equation*}
	As given, $\eps$ is small enough so that $(1 - \lambda)\eps\log\left(\frac{e}{\eps}\right) < \delta/4$, so the exponent in Eq. \eqref{eq:subcon-avg-deg-tmp1} is at most
	\begin{equation*}
		-n^{\lambda + \delta}\left(\frac{3\delta}{4} - O(n^{-(1 - \lambda - \delta)})\right)\log n \le  -\frac{1}{2}n^{\lambda + \delta}\delta\log n
	\end{equation*}
	Therefore all subsets of size at least $n^{\lambda + \delta}$ has average degree at least $\eps pn$ with probability at least $1 - n^{-\delta n^{\lambda}/2}$.
\end{proof}

The next lemma is an extension of
the Chernoff bounds
for binomial random variables whose size is not fixed.

\begin{lemma} \label{lem:gen-chernoff}
	Given $p\in (0, 1)$, $m\in (0, \infty)$, if $Y$ be a random variable supported on $[0, m]\cap \Z$
	and $X\sim \Bin(Y, p)$, then for all $t > 0$,
	\[
	\Pr(X\ge pm + t) \le \exp\left[-\max\Bigl\{\frac{t^2}{2(pm + t)}, \frac{t^2}{2(1 - p)m}\Bigr\}\right].
	\]
\end{lemma}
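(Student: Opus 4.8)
The plan is to reduce the variable-size binomial $X$ to an ordinary binomial $Z\sim\Bin(m,p)$ and then invoke the two classical one-sided Chernoff estimates. First I would apply the exponential moment method: for $s>0$, conditioning on $Y$ gives $\E{e^{sX}}=\E{(1-p+pe^{s})^{Y}}$, and since $1-p+pe^{s}\ge 1$ while $0\le Y\le m$ almost surely, the integrand is dominated pointwise by $(1-p+pe^{s})^{m}$. Hence $\E{e^{sX}}\le(1-p+pe^{s})^{m}=\E{e^{sZ}}$, i.e. $X$ is dominated by $Z$ at the level of moment generating functions (equivalently, the coupling $X=\sum_{i\le Y}B_i\le\sum_{i\le m}B_i=Z$ with i.i.d.\ $\Ber(p)$ variables $B_i$ independent of $Y$ does the job), so $\Pr(X\ge pm+t)\le\Pr(Z\ge pm+t)$.

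Then I would finish with two standard computations on the fixed-size binomial $Z$. For the first term in the maximum, apply the upper-tail Chernoff inequality $\Pr(Z\ge pm+t)\le\exp(-t^{2}/(2pm+t))$ (the $\delta^{2}/(2+\delta)$ form with $\delta=t/(pm)$) and weaken $2pm+t\le 2(pm+t)$. For the second term, write $Z=m-Z'$ with $Z'\sim\Bin(m,1-p)$, so that $\{Z\ge pm+t\}=\{Z'\le(1-p)m-t\}$, and apply the lower-tail Chernoff inequality $\Pr(Z'\le(1-p)m-t)\le\exp(-t^{2}/(2(1-p)m))$ (the $\delta^{2}/2$ form with $\delta=t/((1-p)m)$); the degenerate case $t\ge(1-p)m$, where $pm+t\ge m$ forces the left-hand side to be $0$, is trivial. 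Taking the smaller of the two bounds yields the claimed $\exp[-\max\{\cdot,\cdot\}]$.

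There is no serious obstacle here: the only step that deserves an explicit line is the domination, which rests on the elementary monotonicity of $y\mapsto(1-p+pe^{s})^{y}$ for $s>0$ (equivalently, that a $\Bin(Y,p)$ variable with $Y\le m$ is stochastically dominated by $\Bin(m,p)$). Everything after that is a direct appeal to the textbook Chernoff bounds for an ordinary binomial, so I would state those as known and simply specialize the parameters.
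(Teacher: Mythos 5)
Your proof is correct and is essentially the paper's argument: both rest on the same two one-sided Chernoff bounds for a fixed-size binomial, with the complementary trick $Z'=m-Z\sim\Bin(m,1-p)$ producing the second term of the maximum. The only organizational difference is that you pass to a dominating $Z\sim\Bin(m,p)$ up front (via $\E{e^{sX}}\le(1-p+pe^s)^m$ or the coupling), while the paper conditions on $Y=k$ and observes the resulting exponent is uniform in $k\le m$; these are the same observation that the worst case is $Y$ at its maximum.
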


\begin{proof}
    Recall the usual Chernoff bounds for $Z\sim \Bin(m, p)$:
    \begin{align} 
    \label{eq:gen-chernoff-temp3} \Pr(Z\ge T) & \le \exp\left[-\frac{(T - pm)^2}{2T}\right] \quad \text{ for any } T > pm,
    \\
    \label{eq:gen-chernoff-temp4} \Pr(Z\le T) & \le \exp\left[-\frac{(pm - T)^2}{2pm}\right] \quad \text{ for any } T < pm.
    \end{align}
    Using Eq. \ref{eq:gen-chernoff-temp4}, one can get an extension of Eq. \ref{eq:gen-chernoff-temp3} as follows:
	For $T > pm$, consider $Z' \defeq m - Z \sim \Bin(m, 1 - p)$, then $(1 - p)m - (m - T) = T - pm$, so we have
    \begin{equation} \label{eq:gen-chernoff-temp5}
        \Pr(Z\ge T) = \Pr(Z'\le m - T)
        \le
        \exp\left[-\frac{(T - pm)^2}{2(1 - p)m}\right].
    \end{equation}
    Combining Eeqs. \eqref{eq:gen-chernoff-temp3} and \eqref{eq:gen-chernoff-temp5}, we get the desired inequality for the random variable $Z$.
    It now suffices to show that $Z$ \emph{stochastic dominates} $X$, in the sense that for all $t \ge 0$, we have
    \begin{equation*}
        \Pr(Z \le t) \le \Pr(X \le t).
    \end{equation*}
    To prove this, it suffices to couple $X$ and $Z$ in a probability space such that $X'\le Z'$ always holds, where $X'$ and $Z'$ are respectively copies of $X$ and $Z$.
    We can use the following straightforward coupling:
    
    Let $X'$ be an arbitrary copy of $X$.
    Let $Y_1 \defeq m - Y$ and let $X_1 \sim \Bin(Y_1, p)$, independently of $Y$ and $X'$.
    Let $Z' \defeq X' + X_1$.
    For any $k\in [0, m]\cap \Z$, $Z'\sim \Bin(m, p)$ on the subspace conditioned on $Y = k$.
    For each $t\in [0, m]\cap \Z$, we have
    \begin{equation*}
    \begin{aligned}
        \Pr(Z' = t)
        & = \sum_{k = 0}^m \Pr(Z' = t \mid Y = k)\Pr(Y = k)
        \\
        & = \sum_{k = 0}^m \binom{m}{t} p^t (1 - p)^{m - t}\Pr(Y = k)
        =  \binom{m}{t} p^t (1 - p)^{m - t}.
    \end{aligned}
    \end{equation*}
    Therefore $Z'\sim \Bin(m, p)$.
    Since $Z' \ge X'$ always from their definitions, the proof is complete.
\end{proof}

The final lemma comes from a breakthrough work by Georges, Bordenave and Knowles \cite{georges2017}, which provides a tight bound on the operator norm of a random matrix with independent but arbitrary, potentially non-identical cell distributions.
Their result extends another pioneering work by Bandeira and van Handel \cite{bandeira2014} for the case of non-sub-Gaussian cell distribution.
It can be proven using \cite[Theorem 3.2]{georges2017} and some techniques in \cite{bandeira2014}.

\begin{lemma} \label{lem:matrix-norm-bound}
	Let $c, a > 0$ be constants.
    Assume $pn \ge c\log n$.
	With probability at least $1 - n^{-a}$, the adjacency matrix $A$ of a random graph $G\sim G(n, p)$ satisfies
	$\|A - \E{A}\|_{op} \le C_{c, a}\sqrt{pn}$ for a constant $C_{c, a}$ depending on $c$ and $a$.
\end{lemma}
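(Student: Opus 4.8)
The plan is to separate the estimate into a bound on the \emph{expectation} $\E{\|A - \E{A}\|_{op}}$, for which we invoke the sharp random-matrix bound of Bandeira and van Handel, and a \emph{concentration} bound around that expectation, for which we use a convex concentration inequality. Write $X \defeq A - \E{A}$. Since $A$ is the adjacency matrix of a simple graph on $n$ vertices, $X$ is symmetric with zero diagonal, and its above-diagonal entries $\{X_{ij}\}_{i<j}$ are independent, each equal to $\one_{i\adj j} - p$, hence centered with $|X_{ij}| \le 1$ and $\E{X_{ij}^2} = p(1-p)$. In the language of \cite{bandeira2014}, the governing parameters are $\sigma \defeq \max_i\bigl(\sum_j \E{X_{ij}^2}\bigr)^{1/2} = \sqrt{(n-1)p(1-p)} \le \sqrt{pn}$ and $\sigma_* \defeq \max_{i,j}\|X_{ij}\|_\infty \le 1$. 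Applying \cite[Lemma 3.6]{bandeira2014} (and the surrounding argument of that paper) yields a bound of the shape
\[
\E{\|X\|_{op}} \le C_0\bigl(\sigma + \sigma_*\sqrt{\log n}\bigr) \le C_0\bigl(\sqrt{pn} + \sqrt{\log n}\bigr),
\]
and the hypothesis $pn \ge c\log n$ gives $\sqrt{\log n} \le c^{-1/2}\sqrt{pn}$, so $\E{\|X\|_{op}} \le C_1\sqrt{pn}$ (the constant being absolute once $c$ is bounded below, which is the case in every application here, where $c = 1 + \lambda \ge 1$).

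For the concentration step, regard $M \mapsto \|M\|_{op}$ as a function of the independent coordinates $y = (X_{ij})_{i<j}$, each of which ranges over an interval of length at most $1$. This function is convex, because $X$ depends linearly on $y$ and $\|\cdot\|_{op}$ is a norm, and it is $\sqrt2$-Lipschitz in the Euclidean norm of $y$, since $\bigl|\|X(y)\|_{op} - \|X(y')\|_{op}\bigr| \le \|X(y) - X(y')\|_{op} \le \|X(y) - X(y')\|_F = \sqrt2\,\|y - y'\|_2$. Talagrand's concentration inequality for convex Lipschitz functions of bounded independent random variables then gives, for absolute constants $a_1, a_2 > 0$,
\[
\Pr\bigl(\|X\|_{op} \ge \E{\|X\|_{op}} + t\bigr) \le a_1 e^{-a_2 t^2} \qquad \text{for all } t > 0;
\]
the essential feature is that this tail is \emph{dimension-free} --- a plain bounded-differences bound would only control deviations at the irrelevant scale $\sqrt{n}$.

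Taking $t = \sqrt{a_2^{-1}(c+1)\log n}$ makes the right-hand side at most $a_1 n^{-(c+1)} \le n^{-c}$ for $n$ large, while $t = \O(\sqrt{\log n}) = \O(\sqrt{pn})$ by the density hypothesis; combining with the expectation bound gives $\|A - \E{A}\|_{op} \le C_1\sqrt{pn} + \O(\sqrt{pn}) = C\sqrt{pn}$ with probability at least $1 - n^{-c}$, as claimed. The only non-routine ingredient is the sharp expectation bound quoted from \cite{bandeira2014}; the rest is bookkeeping, and the one point that genuinely must be handled with care is keeping the concentration dimension-free, which is precisely why one uses convexity and Talagrand's inequality rather than a martingale/bounded-differences argument.
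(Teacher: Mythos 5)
Your proof is correct and follows essentially the same two-step strategy as the paper: bound $\E{\|A-\E{A}\|_{op}}$ via the Bandeira--van Handel moment estimate, then concentrate around the mean via Talagrand's inequality for convex Lipschitz functions of bounded independent variables. The paper differs only in technical details: it first symmetrizes, bounding $\E{\|A-A'\|_{op}}$ for an independent copy $A'$ so that Lemma 3.6 of \cite{bandeira2014} (which is stated for Rademacher-coefficient matrices) applies verbatim, and then passes back to $A-\E{A}$ via Jensen's inequality, whereas you invoke the general moment bound of \cite{bandeira2014} directly; the paper also chooses $t = 2\sqrt{pn}$ in the concentration step rather than $t \asymp \sqrt{(c+1)\log n}$, which gives $e^{-t^2/4} = e^{-pn} \le n^{-c}$ and keeps the final constant manifestly free of $c$ --- a small improvement you could adopt, though as you note both proofs already need $c$ bounded below to absorb the $\sigma_*\sqrt{\log n}$ term into $\sqrt{pn}$ in the expectation bound.
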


\begin{proof}
    Let $X \defeq A - \E{A}$.
    Applying \cite[Theorem 3.2]{georges2017} for the simple case $p_{ij} = p\one_{\{i \neq j\}}$ for all $i, j\in [n]$, we get
    \begin{equation*}
        \E{\|X\|_{op}} \le \left(2 + C\eta \right)\sqrt{d},
    \end{equation*}
    where $C$ is a universal constant, $d \defeq p(1 - p)(n - 1) \le pn$, and
    \begin{equation*}
        \eta \defeq \frac{\sqrt{\log n}}{\min\{\sqrt{d}, n^{.1}\}}
        \le \frac{\sqrt{\log n}}{\sqrt{c\log n}} = c^{-1/2}.
    \end{equation*}
    Therefore, we obtain a simpler bound:
    \begin{equation*}
        \E{\|X\|_{op}} \le (2 + Cc^{-1/2})\sqrt{pn}.
    \end{equation*}
	Finally, a Talagrand-like inequality \cite[Theorem 6.10]{boucheron2013} for the convex $1$-Lipschitz function $g: [0, 1]^{n(n-1)/2} \to \R$: $\{Z_{ij}\}_{i<j} \mapsto 2^{-1/2}\|Z - \E{A}\|_{op}$ gives
	\[
	\Pr\left(\|X\|_{op} \ge \E{\|X\|_{op}} + t\right) \le e^{-t^2/4}.
	\]
	Combining the three inequalities, we get
	\begin{equation*}
	\Pr\left(\|X\|_{op} \ge (2 + Cc^{-1/2})\sqrt{pn} + t\right) \le e^{-t^2/4}.
	\end{equation*}
	Letting $t = 2\sqrt{apn}$, we get the desired bound for $C_{c, a} \defeq 2 + Cc^{-1/2} + 2\sqrt{a}$.
\end{proof}

\section{Proof of Technical Results in Main Theorem's Proof} \label{sec:shrink-scheme}


In this section, we present the proofs of
\ref{lem:hypo-blue-bound-2} in the last two steps of the proof of Theorem \ref{thm:main-nonlazy}, and Proposition \ref{prop:day1-lazy} in the proof of Theorem \ref{thm:main-lazy}.



\begin{proof}[Proof of Lemma \ref{lem:hypo-blue-bound-2}]
	Let $b_t \defeq \card{B_t}/n$ and 
	$\mathbf{v} \defeq \bigl[\chi_{B_t}(u) - b_t\bigr]_{u\in V} \in \R^n$.
	We have
	\[
	\mathbf{1}_n^T \mathbf{v} = 0 \ \text{ and } \ \|\mathbf{v}\|_2^2 = b_tn(1 - b_t)^2 + (1 - b_t)nb_t^2 = b_t(1 - b_t)n.
	\]
	Now for any $x$, we have
	\[
	\|A_G\mathbf{v}\|_2^2 = \|(A_G - x\mathbf{1}_n\mathbf{1}_n^T)\mathbf{v}\|_2^2
	\le \bigl\|A_G - x\mathbf{1}_n\mathbf{1}_n^T\bigr\|_{op}^2 \cdot (1 - b_t)\card{B_t}.
	\]
	The LHS has the form
	\[
	\|A_G\mathbf{v}\|_2^2 = \sum_{v\in V} \Bigl[ \sum_{u\adj v}(\chi_{B_t}(u) - b_t) \Bigr]^2
	= \sum_{v\in V} (d_{B_t}(v) - b_td(v))^2. 
	\]
	Considering the fact $v\in B_{t+1}$ satisfies $d_{B_t}(v) \ge \tfrac{1}{2}d(v) \ge b_td(v)$, we have
	\[
	\begin{aligned}
		& \sum_{v\in V} (d_{B_t}(v) - b_td(v))^2 \ge \sum_{v\in B_{t+1}} (d_{B_t}(v) - b_td(v))^2 \\
		& \quad \ge \left(\frac{1}{2} - b_t\right)^2 \sum_{v\in B_{t+1}} d(v)^2
		\ge \left(\frac{1}{2} - b_t\right)^2 \card{B_{t+1}} \left(\frac{1}{\card{B_{t+1}}}\sum_{v\in B_{t+1}}d(v)\right)^2.
	\end{aligned}
	\]
	Combining the three equations above, we get
	\[
	\begin{aligned}
		& \left(\frac{1}{\card{B_{t+1}}}\sum_{v\in B_{t+1}}d(v)\right)^2 \card{B_{t+1}}
		\le \|A_G - x\mathbf{1}_n\mathbf{1}_n^T\|_{op}^2 \cdot \frac{1 - b_t}{(1/2 - b_t)^2}\card{B_t}
	\end{aligned}
	\]
	Note that since $b_t \le b < 1/2$, $\frac{1 - b_t}{(1/2 - b_t)^2} \le \frac{1 - b}{(1/2 - b)^2}$.
	Passing to the infimum over $x$ completes the proof.
\end{proof}

\begin{proof}[Proof of Proposition \ref{prop:day1-lazy}]
    By a Chernoff bound, for a universal constant $C$, for each $\eps \in (0, 1)$, with probability $1 - \eps/2$, the set $\ac{V_0}$ satisfies
    \begin{equation}  \label{eq:day1-lazy-pf-bound-Vac}
    \begin{aligned}
        \ac{R_0} & = \pac \card{R_0} \pm C_\eps\sqrt{\pac\card{R_0}}, \\
        \ac{B_0} & = \pac \card{B_0} \pm C_\eps\sqrt{\pac\card{B_0}}.
    \end{aligned}
    \end{equation}
    We expose $\ac{V_0}$ such that Eq. \eqref{eq:day1-lazy-pf-bound-Vac} occurs.
    Let $m \defeq \card{\ac{V_0}}$, we have
    \begin{equation}  \label{eq:day1-lazy-pf-m-bound}
        m = \pac n \pm 2C_\eps\sqrt{\pac n} \implies \frac{1}{4}\pac n \le m \le 2\pac n.
    \end{equation}
    Set $c \defeq 10$ and $D\defeq .21$, and recall from the proof of Theorem \ref{thm:main-nonlazy} that $D_c \approx .41$ and $C_{\ref{lem:day1}}(c, D) < 14$, where $D_c$ and $C_{\ref{lem:day1}}(c, D)$ are from Lemma \ref{lem:day1}.
    
    We assume the following:
    \begin{equation}  \label{eq:day1-lazy-pf-bounds}
        p\pup\pac^2\adv \ge \max\{8KD^{-1}, 2c\}, \quad p^{1/2}\pup\pac\adv \ge \max\{24C_\eps D^{-1}, 11\eps^{-1/2}\},
    \end{equation}
    the latter also implies $Dp\pup\pac^2\adv \ge 24C_\eps\pac\sqrt{p}$, which along with the former give
    \begin{equation}  \label{eq:day1-lazy-pf-bound-1}
        Dp\pup\pac^2\adv \ge (8K + 24C_\eps\pac\sqrt{p})/2 \ge 4K + 12C_\eps\pac\sqrt{p}.
    \end{equation}
    The former also implies
    \begin{equation}  \label{eq:day1-lazy-pf-bound-2}
        p\pac\adv \ge 2c.
    \end{equation}
    We also assume that $\pac^{1/2}\adv \ge 4C_\eps n^{1/2}$, so that
    \begin{equation}  \label{eq:day1-lazy-pf-bound-adv}
        \pac\adv \ge 4C_\eps\sqrt{\pac n}.
    \end{equation}
    
    Consider $B\defeq B_1\cap \ac{V_0}$.
    We will upper-bound this set using the same strategy in the proof of Lemma \ref{lem:day1}.
    Fix arbitrary $u, v \in \ac{V_0}$, we will
    upper-bound $\E{\card{B}}$ through $\E{\one_B(v)}$, and upper-bounding $\Var{\card{B}}$ through $\Cov{\one_B(u), \one_B(v)}$.
	
	\textit{Step 1.1 - Bounding $\E{\card{B}}$.}
	We have
	\begin{equation} \label{eq:lazy-b1-formula}
		\one_{B}(v) = \one_{\upd{V_0}}(v)\one_{\hypo{B}_0}(v) + (1 - \one_{\upd{V_0}}(v))\one_{\ac{B_0}}(v).
	\end{equation}
	Before $G$ is drawn, each node is observed independently by $v$ with probability $p$, thus we can use the same expectation argument from Proposition \ref{prop:day1-exp-var} to bound $\E{\one_{\hypo{B}_0}(v)}$.
    Let $\adv' \defeq (\card{R_0} - \card{B_0})/2$. By Eq. \eqref{eq:day1-lazy-pf-bound-adv}, we have
    \begin{equation}
        \adv' \ge \pac \adv - 2C_\eps\sqrt{\pac n} \ge \pac \adv/2,
    \end{equation}
    meaning $\card{\ac{B_0}} \le m - \pac \adv/2$.
    We also have by Eq. \eqref{eq:day1-lazy-pf-bound-2},
    \begin{equation}
        c \le \min\left\{\frac{1}{2}p\pac\adv, \frac{1}{2}\sqrt{p(1 - p)\pac n}\right\}
        \le \min\left\{p\adv', \sqrt{p(1 - p)m}\right\}.
    \end{equation}
    Thus the expectation argument in Proposition \ref{prop:day1-exp-var} gives
	\[
	\E{\one_{\hypo{B}_0}(v)}
    \le \frac{1}{2} - D_c\min\Bigl\{1, \frac{p\adv'}{2\sqrt{pm}}\Bigr\}
    \le \frac{1}{2} - D_c\min\Bigl\{1, \frac{p\pac\adv}{2\sqrt{pm}}\Bigr\},
	\]
    where $D_c$ is also defined in Lemma \ref{lem:day1}.
	Since the indicators are mutually independent ($\one_{\ac{B_0}}$ is a fixed function), we have
	\[
	\begin{aligned}
		\E{\one_{B}(v)}
		& = \E{\one_{\upd{V_0}}(v)}\E{\one_{\hypo{B}_0}(v)} + (1 - \E{\one_{\upd{V_0}}(v)})\one_{\ac{B_0}}(v) \\
		& \le \pup \left(\frac{1}{2} - D_c\min\Bigl\{1, \frac{p\pac\adv}{2\sqrt{pm}}\Bigr\}\right)
		+ (1 - \pup)\one_{\ac{B_0}}(v),
	\end{aligned}
	\]
	where the inequality is from Lemma \ref{lem:day1}.
	Summing over all $v\in \ac{V_0}$, we have
	\begin{equation} \label{eq:lazy-b1-exp}
		\E{\card{B}} \le \frac{m}{2} - \pup D_c\min\left\{m, \frac{1}{2}\pac\adv\sqrt{p m}\right\}.
	\end{equation}
	
	\textit{Step 1.2 - Bounding $\Var{\card{B_1}}$.}
	Similar to the variance step in Lemma \ref{lem:day1}, the key is bounding the covariances.
    From Eq. \eqref{eq:lazy-b1-formula} and independence, we have
	\[
	\Cov{\one_{B}(u), \one_{B}(v)} = \pup^2 \Cov{\one_{\hypo{B}_0}(u), \one_{\hypo{B}_0}(v)}.
	\]
    Using the same combinatorial argument from the covariance step in Proposition \ref{prop:day1-exp-var}, we get the bound
    \[
		\Cov{\one_{\hypo{B}_0}(u), \one_{\hypo{B}_0}(v)}
		< \frac{1}{3m}.
	\]
	Summing over distinct pairs $(u, v)$ in $\ac{V_0}$ and noting $\Var{\one_B(v)}\le 1/4$, we have
	\begin{equation} \label{eq:lazy-B-var}
		\Var{\card{B}} \le \frac{m}{4} + \pup^2 m(m - 1)\frac{1}{3m} < \frac{7m}{12}
	\end{equation}
	
	\textit{Step 1.3 - Using Chebyshev.} We have by Eqs. \eqref{eq:lazy-b1-exp} and \eqref{eq:lazy-B-var}, for any $D < D_c$:
	\begin{equation}  \label{eq:day1-lazy-pf-bound-final}
	\Pr\left(\card{B} > \frac{m}{2} - D\pup \min\left\{m, \frac{1}{2}\pac \adv\sqrt{pm}\right\}\right)
	\le \frac{C_{\ref{lem:day1}}(c, D)}{\pup^2\min\{m, \frac{1}{4}p \pac^2\adv^2\}},
	\end{equation}
    with $C_{\ref{lem:day1}}(c, D)$ defined in Lemma \ref{lem:day1}.
    We have $m \ge \pac n/4$ from Eq. \eqref{eq:day1-lazy-pf-m-bound}, thus
    \[
    \begin{aligned}
        & D\pup \min\left\{m, \frac{1}{2}\pac \adv\sqrt{pm}\right\}
        \ge \frac{D}{4}\pup\pac \min\left\{n, \adv\sqrt{p\pac n} \right\} \\
        & = \frac{D}{4} \min\left\{\pup\pac^{3/2}\sqrt{pn},\ p\pup\pac^2\adv \right\}\sqrt{\frac{n}{p\pac}}
        \ge (K + 3C_\eps\pac\sqrt{p})\sqrt{\frac{n}{p\pac}},
    \end{aligned}
    \]
    where the last inequality holds by Eq. \eqref{eq:day1-lazy-pf-bound-1} with $n$ sufficiently large.
    To bound the RHS of Eq. \ref{eq:day1-lazy-pf-bound-final}, note that $C_{\ref{lem:day1}}(c, D) < 14$ and $m \ge \pac n/4$ by Eq. \eqref{eq:day1-lazy-pf-m-bound}, so
    \[
    \begin{aligned}
        & \frac{C_{\ref{lem:day1}}(c, D)}{\pup^2\min\{m, \frac{1}{4}p \pac^2\adv^2\}}
        \le \frac{56}{\pup^2\min\{\pac n, p\pac^2\adv^2\}} \\
        & = \frac{56}{\min\{\pup^2\pac n, (p^{1/2}\pup\pac\adv^2)^2\}}
        \le \frac{56}{(11\eps^{-1/2})^2} \le \frac{\eps}{2}.
    \end{aligned}
    \]
    Thus with probability at least $1 - \eps/2$, we have
    \begin{equation}  \label{eq:day1-lazy-pf-bound-B}
        \card{B} \le \frac{m}{2} -  (K + 3C_\eps\pac\sqrt{p})\sqrt{\frac{n}{p\pac}}
    \end{equation}
    Assuming this bound for $\card{B}$, we can bound $\card{B_1}$ as follows:
    \begin{equation}
    \begin{aligned}
        \card{B_1} & = \card{B} + \card{B_0} - \card{\ac{B_0}} \\
        & \le \frac{m}{2} - (K + 3C_\eps\pac\sqrt{p})\sqrt{\frac{n}{p\pac}} + (1 - \pac)\card{B_0} + C_\eps\sqrt{\pac n} \\
        & \le \pac \frac{n}{2} + (1 - \pac)\left(\frac{n}{2} - \adv\right) + 3C_\eps\sqrt{\pac n} - (K + 3C_\eps\pac\sqrt{p})\sqrt{\frac{n}{p\pac}} \\
        & \le \frac{n}{2} - K\sqrt{\frac{n}{p\pac}}.
    \end{aligned}
    \end{equation}
    Since the bound above holds when Eqs. \ref{eq:day1-lazy-pf-bound-B} and \ref{eq:day1-lazy-pf-bound-Vac} hold, it holds with probability at least $1 - \eps/2 - \eps/2 = 1 - \eps$.
    The proof is complete.
\end{proof}


\bibliographystyle{plain}
\bibliography{main}

\begin{thebibliography}{10}

\bibitem{alistarh2017}
Dan Alistarh, James Aspnes, David Eisenstat, Rati Gelashvili, and Ronald~L.
  Rivest.
\newblock Time-space trade-offs in population protocols.
\newblock In {\em Proceedings of the Twenty-Eighth Annual ACM-SIAM Symposium on
  Discrete Algorithms}, SODA ’17, page 2560–2579, USA, 2017. Society for
  Industrial and Applied Mathematics.

\bibitem{alistarh2015}
Dan Alistarh, Rati Gelashvili, and Milan Vojnovi\'{c}.
\newblock Fast and exact majority in population protocols.
\newblock In {\em Proceedings of the 2015 ACM Symposium on Principles of
  Distributed Computing}, PODC ’15, page 47–56, New York, NY, USA, 2015.
  Association for Computing Machinery.

\bibitem{bandeira2014}
Afonso~S. Bandeira and Ramon van Handel.
\newblock Sharp nonasymptotic bounds on the norm of random matrices with
  independent entries.
\newblock {\em The Annals of Probability}, 44(4), Jul 2016.

\bibitem{georges2017}
Florent Benaych-Georges, Charles Bordenave, and Antti Knowles.
\newblock Spectral radii of sparse random matrices.
\newblock {\em Annales de l'Institut Henri Poincaré, Probabilités et
  Statistiques}, 56, 04 2017.

\bibitem{benjamini}
Itai Benjamini, Siu-On Chan, Ryan O’Donnell, Omer Tamuz, and Li-Yang Tan.
\newblock Convergence, unanimity and disagreement in majority dynamics on
  unimodular graphs and random graphs.
\newblock {\em Stochastic Processes and their Applications},
  126(9):2719–2733, Sep 2016.

\bibitem{devlinberkowitz2022}
Ross Berkowitz and Pat Devlin.
\newblock Central limit theorem for majority dynamics: Bribing three voters
  suffices.
\newblock {\em Stochastic Processes and their Applications}, 146:187--206,
  2022.

\bibitem{boucheron2013}
St{\'e}phane Boucheron, G{\'a}bor Lugosi, and Pascal Massart.
\newblock {\em The Entropy Method}, chapter~6.
\newblock Concentration Inequalities. Oxford University Press, Oxford, 2013.

\bibitem{jhkim2021}
Debsoumya Chakraborti, Jeong Han~Kim, Joonkyung Lee, and Tuan Tran.
\newblock Majority dynamics on sparse random graphs.
\newblock {\em Random Structures \& Algorithms}, 63(1):171--191, 2023.

\bibitem{esseen}
Carl-Gustav Esseen.
\newblock A moment inequality with an application to the central limit theorem.
\newblock {\em Scandinavian Actuarial Journal}, 1956(2):160--170, 1956.

\bibitem{fountoulakis}
Nikolaos Fountoulakis, Mihyun Kang, and Tamás Makai.
\newblock Resolution of a conjecture on majority dynamics: rapid stabilisation
  in dense random graphs, 2019.

\bibitem{gacs1978}
P.~Gacs, G.~L. Kurdyumov, and L.~A. Levin.
\newblock One-dimensional uniform arrays that wash out finite islands.
\newblock {\em Problemy Peredachi Informatsii}, 14(3):92--98, 1978.
\newblock Cited By :112.

\bibitem{goles1981}
E.~Goles and J.~Olivos.
\newblock Comportement periodique des fonctions a seuil binaires et
  applications.
\newblock {\em Discrete Applied Mathematics}, 3(2):93 -- 105, 1981.

\bibitem{simulations2023}
Linh H~B~Tran.
\newblock The power of few phenomenon: the sparse case simulations [computer
  software].
\newblock \url{https://github.com/thbl2012/The_Power_of_Few_sparse_case.git}.

\bibitem{land1995}
M.~Land and R.~K. Belew.
\newblock No perfect two-state cellular automata for density classification
  exists.
\newblock {\em Physical Review Letters}, 74(25):5148--5150, 1995.

\bibitem{mossel2017}
Elchanan Mossel and Omer Tamuz.
\newblock Opinion exchange dynamics.
\newblock {\em Probab. Surveys}, 14:155--204, 2017.

\bibitem{poljak1983}
Svatopluk Poljak and Miroslav S{\r{u}}ra.
\newblock On periodical behaviour in societies with symmetric influences.
\newblock {\em Combinatorica}, 3(1):119--121, Mar 1983.

\bibitem{sahsawhney2021}
Ashwin Sah and Mehtaab Sawhney.
\newblock Majority dynamics: the power of one.
\newblock {\em Israel Journal of Mathematics}, to appear.

\bibitem{shevtsova}
I.~G. Shevtsova.
\newblock An improvement of convergence rate estimates in the {L}yapunov
  theorem.
\newblock {\em Doklady Mathematics}, 82(3):862--864, 12 2010.

\bibitem{TranVuDense2020}
Linh Tran and Van Vu.
\newblock {Reaching a consensus on random networks: the power of few}.
\newblock In Jaros{\l}aw Byrka and Raghu Meka, editors, {\em Approximation,
  Randomization, and Combinatorial Optimization. Algorithms and Techniques
  (APPROX/RANDOM 2020)}, volume 176 of {\em Leibniz International Proceedings
  in Informatics (LIPIcs)}, pages 20:1--20:15, Dagstuhl, Germany, 2020. Schloss
  Dagstuhl--Leibniz-Zentrum f{\"u}r Informatik.

\bibitem{zehmakan2019}
Ahad Zehmakan.
\newblock Opinion forming in {Erdős–Rényi} random graph and expanders.
\newblock {\em Discrete Applied Mathematics}, 277, 10 2019.

\end{thebibliography}

\end{document}